\newcites{recent}{Additional References}
\newtheorem{theorem}[subsection]{Theorem}
\newtheorem{proposition}[subsection]{Proposition}
\newtheorem{lemma}[subsection]{Lemma}
\newtheorem{corollary}[subsection]{Corollary}
\theoremstyle{definition}
\newtheorem*{conjecture}{Conjecture}
\newtheorem{question}[subsection]{Question}
\newtheorem{definition}[subsection]{Definition}
\newtheorem*{ack}{Acknowledgements}
\theoremstyle{remark}
\numberwithin{equation}{subsection}
\newcommand{\aq}{\operatorname{T}}
\newcommand{\coker}{\operatorname{coker}}
\newcommand{\cotam}[2]{\operatorname{C}_{#1}(#2)}
\newcommand{\depth}{\operatorname{depth}}
\newcommand{\ext}{\operatorname{Ext}}
\newcommand{\fm}{{\mathfrak m}}
\newcommand{\fp}{{\mathfrak p}}
\newcommand{\grade}{\operatorname{grade}}
\newcommand{\height}{\operatorname{height}}
\newcommand{\hh}{\operatorname{H}}
\newcommand{\Hom}{\operatorname{Hom}}
\newcommand{\idmap}{\operatorname{id}}
\newcommand{\im}{\operatorname{im}}
\newcommand{\kos}{\operatorname{K}}
\newcommand{\pdim}{\operatorname{pd}}
\newcommand{\poin}{\operatorname{P}}
\newcommand{\rank}{\operatorname{rank}}
\newcommand{\spec}{\operatorname{Spec}}
\newcommand{\tor}{\operatorname{Tor}}
\begin{document}

\title[Module of differentials]{Homological properties of the \\ module of differentials}
\author[Herzog]{J\"urgen Herzog\\
Appendix by Benjamin Briggs and Srikanth B. Iyengar}

\date{\today}

\keywords{conormal module, cotangent complex, K\"ahler differentials, licci ideal}
\subjclass[2010]{13D07 (primary); 16E45, 13D02,  13D40  (secondary)}

\begin{abstract} 
These notes were produced by J\"urgen Herzog to accompany his lectures in Recife, Brazil, in 1980, on the homological algebra of noetherian local rings.  They are are concerned with two conjectures made by Wolmer Vasconcelos: if the conormal module of a local ring has finite projective dimension, or if the module of differentials, taken over an appropriate field, has finite projective dimension, then the ring must be complete intersection. The notes present an accessible and self-contained account of the strongest results known at the time in connection with these problems; this includes a number of ideas that have not appeared elsewhere. In the last section, Herzog turns his attention to the cotangent complex, and conjectures himself that if the cotangent complex of a local ring has bounded homology groups, then the ring must be complete intersection. Among other results, he proves that the conjecture holds for local rings of characteristic zero over which all modules have rational Poincar\'e series.

Sadly J\"urgen Herzog passed away in April of 2024. The notes in this form have been prepared in his memory, newly typeset and lightly edited. A short appendix has been added to survey some of the results of the intervening decades.
\end{abstract}

\maketitle

\setcounter{tocdepth}{1}
\tableofcontents

\section*{Preface}
These are a lightly edited version of notes written by J\"urgen Herzog on homological properties of the conormal module, the module of differentials, and the cotangent complex of a noetherian local algebra. They were originally delivered as lectures at the 6th Algebra School of the Brazilian Mathematical Society, in Recife, Brazil, in July of 1980, and appeared in \citerecent{sociedade1981atas}. The notes have been rather influential in the decades since, and quoted in many subsequent works, although they were never easily accessible.

Some of the results have been subsumed in later work, but not all: the theorem of Platte presented in \cref{se:platte}---that an algebra must be quasi-Gorenstein if it has a module of differentials with finite projective dimension---remains one of the strongest known; and likewise the results of \cref{se:cotangent} on the homology of the cotangent complex, joint with Steurich, have not been substantially improved upon. Even aside from the state-of-the-art, the presentation of the basic theory is as useful as ever, and the ideas retain their freshness, so it was thought worthwhile to publish a version of these lectures.

Herzog himself had intended to update the notes for publication in this volume of proceedings. Regrettably he passed away suddenly in April of 2024,
before he could undertake this task. At this point we (Briggs and Iyengar) decided to prepare the notes to be published, albeit without editing them to the extent that Herzog himself may have done.

Sections \ref{se:introduction} to \ref{se:cotangent}, and the first part of the references,  are essentially a faithful reproduction of Herzog's notes. We have taken the liberty to update some notation, correct minor typographical errors, and add clarifying sentences in a few places in the text; when appropriate, these appear in footnotes.

In ~\cref{se:recent} we give a short update on the three conjectures discussed in the body of the text; two due to Vasconcelos on the conormal module and the module of differentials, and one due to Herzog on the cotangent complex, that first appears at the very end of \cref{se:cotangent}. The references that appear here are listed separately.

\section{Introduction}
\label{se:introduction}
In these lectures we present some results concerned with a conjecture of W.~V.~Vasconcelos and some related topics. In \cref{se:cotangent} we consider the cotangent functors $\aq_i$. This  part is joint work with M.~Steurich. Our assumptions unless otherwise stated will be the following: 
 \begin{itemize}
     \item $R$ is a  noetherian local ring;
     \item $I$ is a proper ideal of finite projective dimension;
     \item $S\coloneqq R/I$.
\end{itemize}
All modules will be finitely generated. Given an $R$-module $M$ we use the notation:
\begin{itemize}
    \item $M^*\coloneqq \Hom_R(M,R)$ is the dual of $M$;
    \item $\tau_R(M)$ is the torsion submodule
    \[
    \{m\in M \mid rm=0 \text{ for a nonzero divisor $r$ of $R$}\};
    \]
    \item $\ell_R(M)$ is the length of $M$;
    \item $\mu_R(M)$ is the minimum number of generators of $M$;
    \item $\pdim_R(M)$ is the projective dimension of $M$.
\end{itemize}

The main question we are dealing with this

\begin{conjecture}[C1]
\label{con:C1}
   If $\pdim_S(I/I^2)<\infty$, then  $I$ is a complete intersection, that is to say, $I$ can be generated by a regular sequence.
\end{conjecture}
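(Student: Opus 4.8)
The plan is to reduce Conjecture~\ref{con:C1} to the case where $I/I^2$ is a \emph{free} $S$-module, and then to attack that reduction. The free case is classical: with our standing hypothesis $\pdim_R(I)<\infty$, if $I/I^2$ is free over $S$ then $I$ is generated by a regular sequence (Ferrand, Vasconcelos), and I would recall this as the base case. Its proof is an induction on $\mu(I)$: when $I\neq 0$ one has $\grade(I)=\pdim_R(R/I)\geq 1$, so $I$ lies in no associated prime of $R$; prime avoidance (for $I$ against those associated primes together with the ideal $\fm I$, which also does not contain $I$ by Nakayama) yields $x\in I$ that is a nonzerodivisor on $R$ and whose class generates a free rank-one direct summand of $I/I^2$. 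Passing to $R/xR$ then preserves finiteness of projective dimension (since $x$ is $R$-regular and annihilates $R/I$) and replaces the conormal module by $(I/I^2)/S\bar x$, still free; the induction closes and assembles a regular generating sequence for $I$. So the whole weight of the conjecture falls on:

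\emph{Claim.} If $\pdim_S(I/I^2)<\infty$, then $I/I^2$ is free over $S$.

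By Auslander--Buchsbaum, this is equivalent to $\depth_S(I/I^2)=\depth S$; alternatively, since $\grade(I)=\pdim_R(R/I)$ and any minimal generating set of $I$ is a regular sequence once $\mu(I)=\grade(I)$, it suffices to prove $\mu_S(I/I^2)\leq\pdim_R(R/I)$ (the reverse inequality being automatic). The framework I would use is that of the cotangent functors $\aq_i$ of \cref{se:cotangent}: one has $\aq_1(S/R)\cong I/I^2$, and $I$ is a complete intersection exactly when $\aq_2(S/R)=0$. The idea is to exploit \emph{both} finiteness hypotheses together --- $\pdim_R(S)<\infty$ and $\pdim_S\aq_1(S/R)<\infty$ --- to force a rigidity on the whole graded family $\aq_{\geq 1}(S/R)$, equivalently on the deviations $\varepsilon_i(R\to S)$ (which vanish for $i\geq 2$ precisely when $I$ is a complete intersection), concluding that $\aq_i(S/R)=0$ for all $i\geq 2$; then $I$ is generated by a regular sequence and $I/I^2$ is a posteriori free.

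The main obstacle is this rigidity step. Finiteness of projective dimension of the single module $\aq_1(S/R)=I/I^2$ a priori says nothing about $\aq_2(S/R)$ or the higher cotangent functors, and no purely formal mechanism forces them to vanish; what is needed is essentially a rigidity theorem for André--Quillen homology --- that $\aq_*(S/R)$ cannot carry an isolated piece of finite projective dimension in degree~$1$ above nonzero pieces in higher degrees --- or equivalently an argument through the homotopy Lie algebra of $R\to S$ killing its deviations in degrees $\geq 2$. I expect this to be the genuine difficulty, and in the stated generality it lies beyond the methods of these notes. Accordingly I will prove the Claim, and with it Conjecture~\ref{con:C1}, only under extra hypotheses --- a bound on $\pdim_R(I)$, or conditions on $R$ that make the relevant Poincar\'e series rational --- and otherwise leave the assertion standing as the conjecture.
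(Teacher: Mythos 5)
This statement is a conjecture in the paper; Herzog gives no proof of it, only proofs of special cases (Theorems \ref{1.2}, \ref{1.3}, \ref{2.3}, \ref{4.15}), and the appendix records that the general case was settled only much later by Briggs. Your proposal is therefore correctly calibrated: you do not claim a proof, you reduce the conjecture to the single claim that $\pdim_S(I/I^2)<\infty$ forces $I/I^2$ to be free, and you identify that claim as the genuine open difficulty. The reduction itself is sound and matches the paper's own framing via Theorem \ref{th:ferrand-vasconcelos}; your inductive sketch of the Ferrand--Vasconcelos base case (prime avoidance against the associated primes of $R$ and $\fm I$, then passage to $R/xR$) is essentially the standard argument and is correct. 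Your diagnosis of where the difficulty lies --- a rigidity statement for the cotangent functors, or equivalently for the deviations of $R\to S$ --- is also accurate, and in fact anticipates the route by which the conjecture was eventually proved (\cref{th:briggs} and \cref{th:rigid} in the appendix).

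One imprecision to fix: you write that ``$I$ is a complete intersection exactly when $\aq_2(S/R)=0$.'' If you mean $\aq_2(S/R,S)=0$, this is false: that condition only says $I$ is syzygetic, and Theorem \ref{4.11} exhibits non-complete-intersection perfect ideals of height $2$ with $\aq_2(S/R,S)=\aq_3(S/R,S)=0$. The correct statement requires the vanishing of the \emph{functor} $\aq_2(S/R,-)$, i.e.\ vanishing with coefficients in all $S$-modules (see \cref{co:rigid}), or in characteristic zero the vanishing of all $\aq_i(S/R,S)$ for $i\ge 2$. Since your whole strategy is to deduce vanishing of the higher cotangent modules, you should be explicit about which version of vanishing you are aiming for; the $S$-coefficient version would not suffice on its own. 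With that caveat, your proposal is an honest and accurate assessment of the problem rather than a proof, which is consistent with the status of the statement in the paper.
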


   Observe that if $I$ is a complete intersection, then $I/I^2$ is even free as an $S$-module. This statement has a converse:

   \begin{theorem}[D.~Ferrand~\cite{Ferrand:1967}, W.~V.~Vasconcelos~\cite{Vasconcelos:1967}]
   \label{th:ferrand-vasconcelos}
   If $I/I^2$ is free as an $S$-module, then $I$ is a complete intersection.
   \end{theorem}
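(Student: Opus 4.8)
The plan is to argue by induction on $n \coloneqq \mu_R(I)$, which equals $\mu_S(I/I^2)$ since $I/I^2 \cong I\otimes_R S$; as $I/I^2$ is assumed free, $n$ is precisely its rank. If $n = 0$ then $I = I^2$, so $I = 0$ by Nakayama (recall $I\subseteq\fm$), and $I$ is generated by the empty regular sequence. Assume then $n\geq 1$ and that the statement holds for proper ideals of finite projective dimension requiring fewer generators; the standing hypotheses will be visibly preserved by the rings introduced below, so this induction is legitimate.

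The crucial step is to produce an element $f_1\in I$ that simultaneously lies in a minimal generating set of $I$ and is a nonzerodivisor on $R$. For the first property it suffices that $f_1\notin\fm I$: then the image of $f_1$ in the free $S$-module $I/I^2$ is not in $\fm(I/I^2)$, hence is part of a basis. So I need $f_1\in I$ lying outside $\fm I$ and outside every associated prime of $R$ (a finite set, as $R$ is noetherian). If no such $f_1$ exists, prime avoidance forces $I\subseteq\fm I$---impossible since $n\geq 1$---or $I\subseteq\fp$ for some $\fp\in\operatorname{Ass}(R)$, i.e.\ $\grade(I)=0$. This is exactly where I would invoke $\pdim_R(S)<\infty$: such a $\fp$ contains $I$ and has $\depth R_\fp=0$, while $\pdim_{R_\fp}(R_\fp/IR_\fp)\le\pdim_R(S)<\infty$; by the Auslander--Buchsbaum formula over $R_\fp$ this projective dimension must be $0$, so $R_\fp/IR_\fp$ is $R_\fp$-free, whence $IR_\fp=0$ and $(I/I^2)_\fp=0$. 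But $(I/I^2)_\fp$ is free of rank $n\geq 1$ over $S_\fp\neq 0$---a contradiction. (The hypothesis on projective dimension is genuinely needed: for $R=k[x]/(x^2)$ and $I=(x)$ the module $I/I^2$ is $S$-free of rank $1$, yet $I$ is not generated by a regular sequence.)

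Having fixed such an $f_1$, I would pass to $\bar R\coloneqq R/(f_1)$ with the ideal $\bar I\coloneqq I/(f_1)$, noting $\bar R/\bar I=S$. If $\bar I=0$ then $I=(f_1)$ is already generated by the regular sequence $f_1$; otherwise $S$ is a nonzero $\bar R$-module of finite projective dimension over $R$, so a standard change-of-rings theorem gives $\pdim_{\bar R}(S)=\pdim_R(S)-1<\infty$. Next, $\bar I/\bar I^2 = I/\big(I^2+(f_1)\big)$ is the quotient of the free module $I/I^2$ by the free rank-one summand $S\cdot\overline{f_1}$ (using $f_1\notin\fm I$ once more), hence is $S$-free of rank $n-1$. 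The inductive hypothesis then applies to $\bar I\subseteq\bar R$, producing a regular sequence $\overline{g_2},\dots,\overline{g_n}$ on $\bar R$ that generates $\bar I$. Lifting the $g_i$ into $I$ gives $I=(f_1,g_2,\dots,g_n)$, and $f_1,g_2,\dots,g_n$ is a regular sequence on $R$ because $f_1$ is a nonzerodivisor and $g_2,\dots,g_n$ is $R/(f_1)$-regular.

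The only real obstacle is the middle paragraph: extracting from $I$ a single nonzerodivisor that is also a minimal generator. This is where finiteness of $\pdim_R(S)$ enters (via Auslander--Buchsbaum after localizing at an associated prime), and it is the one place that hypothesis is used. Everything else---the prime-avoidance maneuver, the behavior of $I/I^2$ upon killing one generator, and the change-of-rings fact---is routine, as is the final concatenation of regular sequences.
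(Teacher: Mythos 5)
Your overall strategy---induction on $\mu_R(I)$ by slicing along a nonzerodivisor $f_1\in I\setminus\fm I$---is the classical Ferrand--Vasconcelos route, and most of the steps are sound: the prime-avoidance argument, the localization at an associated prime combined with Auslander--Buchsbaum, the identification of $\bar I/\bar I^2$ as a free module of rank $n-1$, and the final concatenation of regular sequences are all correct. (The paper gives no proof of this statement; it treats it as the case $\pdim_S(I/I^2)=0$ of Gulliksen's Theorem~\ref{1.2}, approached through the sequence $H_1\to S^n\to I/I^2\to 0$, so your argument stands or falls on its own.)

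The gap is the assertion that $\pdim_{\bar R}(S)=\pdim_R(S)-1<\infty$ ``by a standard change-of-rings theorem.'' The relevant standard theorem (for $x\in\fm$ a nonzerodivisor on $R$ and $M$ a nonzero finitely generated $R/(x)$-module) reads $\pdim_R(M)=\pdim_{R/(x)}(M)+1$ \emph{provided} $\pdim_{R/(x)}(M)$ is finite; it does not let you descend finiteness of projective dimension from $R$ to $R/(x)$, and that descent is false in general: $\pdim_{k[[t]]}(k)=1$ while $\pdim_{k[[t]]/(t^2)}(k)=\infty$. Nor is it rescued by the properties of $f_1$ you have secured by that point in the argument. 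Take $R=k[[a,b]]$, $I=(a^2,ab)$, $f_1=a^2$: then $f_1$ is a nonzerodivisor lying in $I\setminus\fm I$ and $\pdim_R(R/I)=2$, yet over $\bar R=R/(a^2)$ the module $S=R/I$ has the infinite periodic minimal resolution $\cdots\xrightarrow{\,\bar a\,}\bar R\xrightarrow{\,\bar a\,}\bar R\xrightarrow{\,\overline{ab}\,}\bar R\to S\to 0$, so $\pdim_{\bar R}(S)=\infty$. In that example $I/I^2$ is of course not free ($(b,-a)$ is a nonzero syzygy of $\overline{a^2},\overline{ab}$ over $S$), which is exactly the point: the freeness of $I/I^2$ must be invoked to prove $\pdim_{\bar R}(S)<\infty$---for instance through its consequence $(I^2:f_1)=I$, equivalently $(f_1)\cap I^2=f_1I$---and your argument never uses it there. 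As written, the induction does not close.
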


The following result, which strengthens Theorem \ref{th:ferrand-vasconcelos}, is due to T.~H.~Gulliksen~\cite[Proposition 1.4.9]{Gulliksen:1971}.

\begin{theorem}\label{1.2}
    If $\pdim_S(I/I^2)\le 1$, then $I$ is a complete intersection.
\end{theorem}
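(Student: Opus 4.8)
The heart of the matter is to show that $\pdim_S(I/I^2)$ cannot be exactly $1$. Granting this, the hypothesis forces $\pdim_S(I/I^2)=0$, i.e.\ $I/I^2$ is a free $S$-module, and \cref{th:ferrand-vasconcelos} finishes the job. (Conversely this \emph{must} fail when $\pdim_S(I/I^2)=1$, since a complete intersection has free conormal module; so the content of the theorem is precisely the impossibility of $\pdim_S(I/I^2)=1$.) So suppose for contradiction that $\pdim_S(I/I^2)=1$, and fix a minimal free resolution
\[
0 \longrightarrow S^{a} \xrightarrow{\ \varphi\ } S^{n} \longrightarrow I/I^{2} \longrightarrow 0 ,
\]
with $n=\mu_R(I)$, with $a\ge 1$, and with the entries of $\varphi$ in $\fm_S\coloneqq\fm/I$.

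The first move is to pin down the local structure of $I$. Let $\fp$ be any prime of $R$ minimal over $I$. Then $R_\fp$ is local, $\pdim_{R_\fp}(R_\fp/I_\fp)<\infty$, the ring $S_\fp=R_\fp/I_\fp$ is Artinian local, and $(I/I^2)_\fp=I_\fp/I_\fp^2$ has projective dimension $\le 1$ over $S_\fp$; since over an Artinian local ring a module of finite projective dimension is free, $(I/I^2)_\fp$ is $S_\fp$-free. By \cref{th:ferrand-vasconcelos} applied over $R_\fp$, the ideal $I_\fp$ is a complete intersection, hence generated by a regular sequence of length $c(\fp)\coloneqq\dim R_\fp=\height\fp$, and $(I/I^2)_\fp$ is free over $S_\fp$ of rank $c(\fp)$. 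Localizing the displayed resolution at $\fp$ gives a short exact sequence $0\to S_\fp^a\to S_\fp^n\to (I/I^2)_\fp\to 0$ with free right-hand term, which therefore splits; comparing ranks over the (nonzero) Artinian local ring $S_\fp$ yields $n=a+c(\fp)$. As this holds at every minimal prime of $I$, the number $c(\fp)=n-a$ is independent of $\fp$, whence $I$ is equidimensional with $\height I=n-a\le n-1<\mu_R(I)$.

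The remaining step — and this is where I expect the real difficulty to lie — is to use $\pdim_R(R/I)<\infty$ to force $a=0$; concretely, to rule out an ideal of finite projective dimension with $\pdim_S(I/I^2)\le 1$ that drops its minimal number of generators on passing to a minimal prime. The natural tool is the identification $I/I^2\cong\tor_1^R(S,S)$ together with the change-of-rings spectral sequence for $R\to S$ applied to the Koszul complex $\kos(\mathbf a;R)$ on a minimal generating sequence $\mathbf a$ of $I$, which produces an exact sequence
\[
0\longrightarrow \hh_1(\mathbf a)\big/\im\!\big(\tor_2^R(S,S)\to\hh_1(\mathbf a)\big)\longrightarrow S^n\longrightarrow I/I^2\longrightarrow 0 ,
\]
identifying the first syzygy $S^a$ of $I/I^2$ as a quotient of the first Koszul homology module $\hh_1(\mathbf a)$. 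A basis‑change computation shows that at each minimal prime $\fp$ one has $\hh_1(\mathbf a)_\fp\cong S_\fp^{a}$ (the localized generating sequence being a non‑minimal extension of a regular sequence), so $\hh_1(\mathbf a)$ has constant rank $a$ at the minimal primes of $S$ and the surjection $\hh_1(\mathbf a)\twoheadrightarrow S^a$ is rank‑preserving. The goal is then to show that when $\pdim_R(R/I)<\infty$ the module $\hh_1(\mathbf a)$ — equivalently the divided power algebra $\tor^R(S,S)$ — cannot be this small: establishing the requisite depth or rigidity property of $\hh_1(\mathbf a)$, and thereby forcing $a=0$, is the main obstacle. Once $a=0$ we are back in the setting of \cref{th:ferrand-vasconcelos} and conclude that $I$ is a complete intersection.
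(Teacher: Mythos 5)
Your reduction and setup are correct as far as they go, and they coincide with the paper's: the natural exact sequence $\hh_1(\mathbf a)\xrightarrow{\ \iota\ } S^n\to I/I^2\to 0$ (with $n=\mu_R(I)$) identifies the first syzygy of $I/I^2$ in its minimal presentation with $\im(\iota)$, so if $\pdim_S(I/I^2)=1$ that syzygy is free of rank $a\ge 1$ and the resulting surjection $\hh_1(\mathbf a)\twoheadrightarrow S^a$ splits, giving $\hh_1(\mathbf a)$ a nonzero free direct summand. Your localization computation showing that $I$ is equidimensional of height $n-a$ is also fine, but it is a side remark: it is not used in the paper's argument and does not bring you closer to a contradiction.

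The genuine gap is exactly the step you label ``the main obstacle'': you never prove that, for an ideal with $\pdim_R(R/I)<\infty$, the first Koszul homology $\hh_1(\mathbf a)$ on a minimal generating set cannot admit a nonzero free direct summand unless $I$ is a complete intersection. Everything you do prior to this point is essentially formal (indeed, ``the first syzygy of $I/I^2$ is a free quotient of $\hh_1(\mathbf a)$'' is close to a restatement of the hypothesis $\pdim_S(I/I^2)\le 1$), so this missing rigidity statement is the entire content of the theorem. It is precisely what the paper delegates to Gulliksen's Proposition 1.4.9 \cite{Gulliksen:1971}: a cycle whose class generates a free summand of $\hh_1$ is exploited, together with the finiteness of $\pdim_R(R/I)$, through the structure of the Tate resolution to conclude that $I$ is a complete intersection. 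Your suggested route via ``depth or rigidity properties of $\tor^R(S,S)$'' is a plausible direction but is not carried out, and no elementary argument is offered in its place. As written, the proposal is an accurate reduction of the theorem to its known hard step, not a proof of it.
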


\begin{proof}[Sketch of a proof]
Suppose $I$ is minimally generated by $x_1,\dots,x_n$. Let $\hh_1$ be the first homology of the Koszul complex $\kos(x_1,\dots,x_n; R)$. Then there is a natural exact sequence of $S$-modules
\[
H_1\xrightarrow{\ \iota\ } S^n \xrightarrow{\ \varepsilon\ } I/I^2 \longrightarrow 0
\]
where $\varepsilon$ is the obvious epimorphism. To define $\iota$ consider a $1$-cycle $z=\sum_{i=1}^nr_iT_i$. We define $[z]=(\overline{r}_1,\dots,\overline{r}_n)$. Here $[z]$ is the homology class of $z$ and $\overline{r}_i$ the residue class of $r_i$ modulo $I$. Since  $\pdim_S(I/I^2)\le 1$ either $H_1=0$ or it contains a free summand $F\ne 0$. Given Theorem~\ref{th:ferrand-vasconcelos}, we can suppose $H_1\ne 0$. Then, from the proof of ~\cite[Proposition 1.4.9]{Gulliksen:1971}  it follows  that $I$ is a complete intersection.
\end{proof}

Conjecture (C1) is not settled for ideals with $\pdim_S (I/I^2)=2$\footnote{This has changed; see Section~\ref{se:recent}}.

Recall that $I$ is called \emph{perfect} if $\pdim_R(R/I)=\height(I)$, the height of $I$. Complete intersections are perfect so one might try to prove at least that $I$ is perfect if $\pdim_S (I/I^2)$ is finite. For a certain class of ideals W.~V.~Vasconcelos \cite{Vasconcelos:1978a,Vasconcelos:1978b} could prove this.

\begin{theorem}\label{1.3}
\ 
\begin{enumerate}[\quad\rm(a)]
    \item 
        If $\pdim_R (I)\le 1$ and $\pdim_S (I/I^2)<\infty$, then $I$ is a complete intersection. 
    \item
        If $\pdim_R(I) = 2$ and $\pdim_S(I/I^2)<\infty$ then $I$ is perfect. If moreover $I$ is a Gorenstein ideal---that is $\ext^3_R(S,R)\cong S$---and $2$ is
unit in $R$, then $I$ is a complete intersection.
\end{enumerate}
   
\end{theorem}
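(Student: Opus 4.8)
The plan is to reduce both parts to the single statement that $\mu_R(I)=\grade(I)$. For \emph{any} minimal generating set $x_1,\dots,x_n$ of $I$, Koszul depth sensitivity gives $\grade(I)=n-\sup\{i : H_i\neq 0\}$ for the homology $H_i=\hh_i(\kos(x_1,\dots,x_n;R))$; so $I$ is a complete intersection if and only if $H_1=0$, if and only if $\mu_R(I)=\grade(I)$. The lever is the exact sequence of $S$-modules $H_1 \xrightarrow{\ \iota\ } S^n \xrightarrow{\ \varepsilon\ } I/I^2 \to 0$ from the sketch of \cref{1.2}, in which, in addition, $\ker\iota$ is the image of a natural map $\tor^R_2(S,S)\to H_1$; the task in each case is to push $\pdim_S(I/I^2)<\infty$, together with the bound on $\pdim_R(I)$, through this sequence.

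\textbf{Part (a).} We may assume $I\neq 0$, and then $\grade(I)\geq 1$, since an ideal of finite projective dimension consisting only of zerodivisors would vanish. If $\pdim_R(I)=0$ then $I$ is projective, hence free, hence principal on a nonzerodivisor, and we are done. If $\pdim_R(I)=1$, a minimal free resolution of $I$ reads $0\to R^{m}\xrightarrow{\varphi} R^{m+1}\to I\to 0$ with $m\geq 1$, and the Hilbert--Burch theorem gives $I=a\cdot J$, where $a$ is a nonzerodivisor and $J\coloneqq I_m(\varphi)$ is a perfect ideal of grade $2$ with $\mu_R(J)=\mu_R(I)=m+1$. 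Multiplication by $a$ identifies $I/I^2$ with $J/aJ^2$, and hence yields a short exact sequence of $S$-modules
\[
0\longrightarrow J/J^2 \longrightarrow I/I^2 \longrightarrow J/aJ \longrightarrow 0 .
\]
Tracking $\pdim_S(-)<\infty$ along this sequence and using the explicit resolution of $J$, one shows that necessarily $m=1$ and $a$ is a unit, so that $I=J$ is an ideal of grade $2$ generated by two elements---hence, by Koszul depth sensitivity, by a regular sequence. Thus $I$ is a complete intersection. That last implication is the delicate point: it is not a bare rank count, and it is precisely here that the finer structure of $H_1$ must be used.

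\textbf{Part (b), perfection.} Here a minimal free resolution of $I$ has the form $0\to R^{c}\to R^{b}\to R^{n}\to I\to 0$, so that $\pdim_R(R/I)=3$ and hence $\grade(I)\leq 3$; one must show $\grade(I)=3$, that is, that $R/I$ is Cohen--Macaulay. I would argue by contradiction: if $R/I$ were not Cohen--Macaulay, then $\depth_R(S)=\depth R-3$ would be too small, and the contradiction should come from a depth count---apply the Auslander--Buchsbaum formula over $S$ to $I/I^2$, and to its localizations at primes of $S$ lying off the Cohen--Macaulay locus (which are again of finite projective dimension), and play this against the depths imposed by the sequence $0\to\im(\iota)\to S^n\to I/I^2\to 0$. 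Getting this count to close---which requires enough control of $\depth_S H_1$, equivalently of the grade of the support of $I/I^2$ over $S$---is the main obstacle of part (b).

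\textbf{Part (b), the Gorenstein case.} With $I$ now perfect of grade $3$ and $\ext^3_R(S,R)\cong S$, the minimal free resolution of $S$ is self-dual, so by the Buchsbaum--Eisenbud structure theorem $I$ is the ideal of submaximal Pfaffians of an alternating matrix of odd size $\mu_R(I)$. If $\mu_R(I)=3$, this exhibits $I$ as generated by the three entries of a $3\times 3$ alternating matrix, which form a regular sequence (an ideal of grade $3$ on three generators does), and $I$ is a complete intersection; so it remains only to exclude $\mu_R(I)\geq 5$. Here $\pdim_S(I/I^2)<\infty$ is brought in once more: the self-dual middle differential, fed through the conormal sequence, should force $I/I^2$ to have infinite projective dimension over $S$ unless $\mu_R(I)=3$. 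I expect the hypothesis $2\in R^{\times}$ to be needed precisely at this step, to clear a $2$-torsion contribution in the alternating/symmetric bookkeeping of that self-dual matrix.
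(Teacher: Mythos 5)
The paper itself offers no proof of this theorem: it is quoted from Vasconcelos \cite{Vasconcelos:1978a,Vasconcelos:1978b}, so there is no argument in the text to compare yours against. Judged on its own, your proposal is a strategy outline rather than a proof: in each of the three parts the decisive step is asserted and then explicitly deferred (``one shows that necessarily $m=1$ and $a$ is a unit''; ``getting this count to close \dots is the main obstacle''; ``should force $I/I^2$ to have infinite projective dimension \dots I expect the hypothesis $2\in R^{\times}$ to be needed''). These deferred steps are not routine verifications; each is essentially the entire content of the corresponding assertion. Concretely, in part (a) the Hilbert--Burch reduction and the sequence $0\to J/J^2\to I/I^2\to J/aJ\to 0$ are correctly set up, but ``tracking $\pdim_S(-)<\infty$ along this sequence'' is not a legitimate move: finiteness of projective dimension of the \emph{middle} term of a short exact sequence gives no information about either end, so nothing can be concluded about $J/J^2$ or $J/aJ$ without a further idea---and that further idea is exactly where the theorem lives. (Note also that excluding the case where $a$ is a non-unit, in which $\grade I=1$ while $\mu_R(I)\ge 2$, is a separate argument from excluding $m\ge 2$ when $a$ is a unit; neither is given.)

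Part (b) has the same character. For perfection you must show $\grade I=3$ (not that $S$ is Cohen--Macaulay---the theorem does not assume $R$ is Cohen--Macaulay, and perfection means $\grade I=\pdim_R(R/I)$); the Auslander--Buchsbaum formula gives $\depth_R S=\depth R-3$ unconditionally whenever $\pdim_R S=3$, so no contradiction can arise from that identity alone, and the proposed ``depth count'' against $0\to\im(\iota)\to S^n\to I/I^2\to 0$ is never set up: it would require control of the depth of $\im(\iota)$, i.e.\ of a quotient of $H_1$, which the proposal does not provide. In the Gorenstein case, invoking the Buchsbaum--Eisenbud structure theorem to present $I$ by an odd-size alternating matrix is a reasonable starting point, and the anticipated role of $2\in R^{\times}$ in the alternating/symmetric bookkeeping is plausible, but the claim that self-duality together with $\pdim_S(I/I^2)<\infty$ forces $\mu_R(I)=3$ is precisely the statement to be proved and is not argued. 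In short: the architecture is broadly consistent with how such results are proved in the literature, but none of the three parts is actually established.
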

  
We will see later in \cref{se:platte} that if the ambient ring $R$ is Gorenstein, and $I$ is a perfect ideal with $\pdim_S(I/I^2)<\infty$, then
$I$ is necessarily a Gorenstein ideal, see \cref{3.3,3.4}. Also the requirement that $2$ is unit in $R$ can be skipped as we will see in \cref{se:linkage}. Thus we have: If $R$ is a Gorenstein ring, $\pdim_R(I)=2$ and $\pdim_S(I/I^2)<\infty$ then $I$ is a complete intersection. Therefore Conjecture (C1) is settled in the case that $R$ is Gorenstein and $\pdim_R(I)=2$.

There is another case where the conjecture can be answered in the affirmative. We call an ideal $I$ an \emph{almost complete intersection} if $\mu_R(I)\le\grade(I)+1$, where $\grade(I)$ is the maximal length of a regular sequence in $I$. The following result is essentially due to Y. Aoyama~\cite{Aoyama:1977} and T. Matsuoka~\cite{Matsuoka:1977}.

\begin{theorem}
If $I$ is an almost complete intersection and $\pdim_S (I/I^2)<\infty$, then $I$ is a complete intersection.    
\end{theorem}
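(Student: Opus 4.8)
Since $\grade(I)\le\height(I)\le\mu_R(I)$ always, an almost complete intersection with $\mu_R(I)=\grade(I)$ has $\mu_R(I)=\height(I)$ and is then generated by a regular sequence (in a local ring, a minimal generating set of an ideal whose grade equals the number of generators is a regular sequence), so it is a complete intersection. Hence, arguing by contradiction, I may assume $\mu_R(I)=g+1$ with $g\coloneqq\grade(I)$ and that $I$ is not a complete intersection, and must reach a contradiction. First I would make two innocuous reductions. Replacing $R$ by $R[t]_{\fm R[t]}$---a faithfully flat local extension preserving $\grade(I)$, $\mu_R(I)$, the finiteness of $\pdim_S(I/I^2)$, and (as one sees via \cref{th:ferrand-vasconcelos}) the property of being a complete intersection---lets me assume the residue field of $R$ is infinite; then a generic $R$-linear change of a fixed minimal generating set gives a minimal generating set $x_1,\dots,x_{g+1}$ of $I$ for which $\mathbf a\coloneqq x_1,\dots,x_g$ is a regular sequence (prime avoidance applied to finitely many associated primes). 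Put $\bar R\coloneqq R/(\mathbf a)$ and let $\bar x$ be the image of $x_{g+1}$, so $\bar I\coloneqq I\bar R=(\bar x)$ and $S=\bar R/(\bar x)$. Since $\mathbf a$ is a maximal $R$-regular sequence in $I$, the grade of $\bar I$ over $\bar R$ is $0$, so $\bar x$ is a zerodivisor on $\bar R$; but $\bar x\ne 0$, since $x_{g+1}\notin(\mathbf a)$.

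Next I would identify the relevant Koszul homology. From $\kos(x_1,\dots,x_{g+1};R)\cong\kos(\mathbf a;R)\otimes_R\kos(x_{g+1};R)$ and the fact that $\kos(\mathbf a;R)$ resolves $\bar R$, one gets $\hh_1(x_1,\dots,x_{g+1};R)\cong(0:_{\bar R}\bar x)$; set $C\coloneqq(0:_{\bar R}\bar x)$, a nonzero $S$-module annihilated by $\bar x$. The natural sequence from the proof of \cref{1.2} now reads
\[
C\xrightarrow{\ \iota\ }S^{g+1}\xrightarrow{\ \varepsilon\ }I/I^2\longrightarrow0,
\]
so the first syzygy of $I/I^2$ is $\ker\varepsilon=\im\iota$, a quotient of $C$; in particular $\im\iota$ has finite projective dimension over $S$.

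I would then localize to see the generic picture. For a minimal prime $\fp$ of $I$ the ring $S_\fp=R_\fp/I_\fp$ is Artinian, so $(I/I^2)_\fp$, which inherits finite projective dimension, is $S_\fp$-free; by \cref{th:ferrand-vasconcelos} over $R_\fp$ the localized ideal $I_\fp$ is then a complete intersection, and $(I/I^2)_\fp$ is free of rank $\height(\fp)$. (One checks in addition, using $\pdim_R(S)<\infty$, that $R_\fp$ is Cohen--Macaulay at each minimal prime $\fp$ of $I$.) So at every minimal prime $I$ behaves like a complete intersection---but its conormal module has there the varying rank $\height(\fp)$, not the constant $\mu_R(I)=g+1$.

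The remaining step---extracting a contradiction---is the heart of the matter and the point I expect to be hardest. Because the local rank of $I/I^2$ is not constant, one cannot simply conclude that $I/I^2$ is globally $S$-free and invoke \cref{th:ferrand-vasconcelos}; instead one must exploit the finiteness of $\pdim_S(I/I^2)$ directly, against the rigidity imposed by the zerodivisor $\bar x$. Concretely, I would study the minimal $S$-free resolution of $I/I^2$, whose first syzygy is the quotient $\im\iota$ of $C=(0:_{\bar R}\bar x)$, and aim to show its length is infinite: this should come from a length/rank count localized at the minimal primes of $I$ (where $I$ is a complete intersection and $R_\fp$ is Cohen--Macaulay), combined with an Auslander--Buchsbaum depth computation over $\bar R$ for $\bar I\cong\bar R/\bigl((\mathbf a):_R x_{g+1}\bigr)$ and the fact that $C$ consists of zerodivisors on $\bar R$. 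Making this last computation precise is where essentially all of the work resides; it is the step I would plan the most care around.
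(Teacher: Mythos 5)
The paper states this theorem without proof, attributing it to Aoyama and Matsuoka, so your argument has to be judged on its own. Your preparatory reductions are all correct and are the standard opening moves: disposing of the case $\mu_R(I)=\grade(I)$, passing to an infinite residue field, choosing a minimal generating set $x_1,\dots,x_{g+1}$ with $\mathbf a=x_1,\dots,x_g$ a regular sequence, identifying $\hh_1$ of the Koszul complex with $(0:_{\bar R}\bar x)\neq 0$, writing down the presentation $\hh_1\to S^{g+1}\to I/I^2\to 0$ from the proof of \cref{1.2}, and observing that $(I/I^2)_\fp$ is free of rank $\height\fp$ at each minimal prime $\fp$ of $I$.

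The difficulty is that the proof stops exactly where the theorem begins. The entire content of the statement is the implication ``$\pdim_S(I/I^2)<\infty$ together with $\bar x$ a zerodivisor on $\bar R$ forces a contradiction,'' and for this you offer only a plan (``a length/rank count \dots\ combined with an Auslander--Buchsbaum depth computation \dots\ where essentially all of the work resides''). Nothing you have written obstructs $\pdim_S(I/I^2)$ from being finite: the fact that the first syzygy $\im\iota$ is a quotient of the torsion module $(0:_{\bar R}\bar x)$ is not by itself an obstruction (a quotient of a torsion module can perfectly well be free, or zero, over $S$), and the rank discrepancy you point to only re-proves that $I/I^2$ is not free, which is already known from \cref{th:ferrand-vasconcelos} under the standing assumption that $I$ is not a complete intersection; it says nothing against finite projective dimension. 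A genuinely new ingredient is needed at this point --- in the literature this is supplied, for instance, by a careful analysis of $\tau(I/I^2)$ and of the link $((\mathbf a):x_{g+1})$, or by exploiting the nonvanishing of $\bigwedge^2\hh_1$ as in the spirit of \cref{4.14} --- and none of it is present here. As it stands the proposal is an accurate reduction to the hard case, not a proof.
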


We now discuss briefly the related conjecture for the module of differentials $\Omega_{S/k}$. Here we restrict our attention to the case that $S$ is the local ring of an affine $k$-algebra, $k$ being a field of characteristic $0$. Thus $S = R/I$, where
\[
R = k[x_1,\dots,x_n]_\fp \quad\text{with}\quad \fp \in \spec k[x_1,\dots,x_n]\,.
\]

\begin{conjecture}[C2]
\label{C2}
     If $\pdim_S \Omega_{S/k}<\infty$, then $I$ is a complete intersection.
\end{conjecture}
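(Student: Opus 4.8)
\smallskip
\noindent\emph{A plan of attack for Conjecture (C2).}
The natural strategy is to reduce (C2) to Conjecture (C1) by comparing $\Omega_{S/k}$ with the conormal module $I/I^2$. Since $R=k[x_1,\dots,x_n]_\fp$ is a localisation of a polynomial ring, it is smooth over $k$, so $\Omega_{R/k}\otimes_R S\cong S^n$ is free and the cotangent complex $\mathrm{L}_{R/k}$ is concentrated in degree zero. Substituting this into the transitivity (Jacobi--Zariski) sequence for $k\to R\to S$, and using the standard identifications $\hh_1(\mathrm{L}_{S/R})\cong I/I^2$ and $\hh_0(\mathrm{L}_{S/R})=0$, one obtains the exact sequence
\[
0\longrightarrow \hh_1(\mathrm{L}_{S/k})\longrightarrow I/I^2\xrightarrow{\ d\ } S^n\longrightarrow\Omega_{S/k}\longrightarrow 0,
\]
in which $d$ is the Jacobian map $\overline{f}\mapsto(\overline{\partial f/\partial x_1},\dots,\overline{\partial f/\partial x_n})$. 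Put $J\coloneqq\im(d)\subseteq S^n$. From $0\to J\to S^n\to\Omega_{S/k}\to 0$ and $\pdim_S\Omega_{S/k}<\infty$ one reads off $\pdim_S J<\infty$ at once (the sequence splits if $\Omega_{S/k}$ is free, and $\pdim_S J=\pdim_S\Omega_{S/k}-1$ otherwise); then $0\to\hh_1(\mathrm{L}_{S/k})\to I/I^2\to J\to 0$ shows that $\pdim_S(I/I^2)<\infty$ \emph{as soon as} $\pdim_S\hh_1(\mathrm{L}_{S/k})<\infty$, whereupon (C1) --- or, in the ranges where it is a theorem, \cref{1.2}, \cref{1.3}, or the almost complete intersection theorem of Aoyama and Matsuoka --- would finish the argument.

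So the crux is the module $\hh_1(\mathrm{L}_{S/k})=\ker d$, and it is here that characteristic zero should be brought to bear. I would first show that $\pdim_S\Omega_{S/k}<\infty$ forces $S$ to be generically reduced --- the example $k[x]/(x^2)$, for which $\Omega_{S/k}\cong k$ has infinite projective dimension, being the prototypical obstruction --- and then, with $S$ generically reduced in hand, observe that each residue field $\kappa(\mathfrak{q})$ at a minimal prime is a finitely generated, hence separably generated, extension of $k$, so that $\mathrm{L}_{\kappa(\mathfrak{q})/k}$ is concentrated in degree zero and $\hh_1(\mathrm{L}_{S/k})_{\mathfrak{q}}=0$. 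Thus $\hh_1(\mathrm{L}_{S/k})$ is a torsion $S$-module, supported on the singular locus of $\spec S$. The plan is then to play this support condition against $\pdim_S\Omega_{S/k}<\infty$ --- through the Auslander--Buchsbaum formula, the depth behaviour of the displayed exact sequence, and the rigidity ideas of \cref{se:cotangent} --- in order to conclude that $\pdim_S\hh_1(\mathrm{L}_{S/k})<\infty$; the optimistic outcome is $\hh_1(\mathrm{L}_{S/k})=0$, i.e.\ $d$ injective, exhibiting $I/I^2$ as a second syzygy of $\Omega_{S/k}$.

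A variant bypasses $I/I^2$ altogether: show that $\pdim_S\Omega_{S/k}<\infty$ propagates to $\hh_i(\mathrm{L}_{S/k})=0$ for $i\gg0$, then invoke the conjecture on the cotangent complex stated at the end of \cref{se:cotangent} to conclude that $k\to S$ is a complete intersection homomorphism; in particular $\hh_2(\mathrm{L}_{S/k})=0$, which --- since $R$ is smooth over $k$, so that the transitivity sequence gives $\hh_2(\mathrm{L}_{S/R})\cong\hh_2(\mathrm{L}_{S/k})$ --- is precisely the Lichtenbaum--Schlessinger criterion that $I$ be generated by a regular sequence. The main obstacle is that neither route is formal: the conormal route must establish the finiteness, ideally the vanishing, of $\hh_1(\mathrm{L}_{S/k})$ and then still appeal to the open Conjecture (C1), while the cotangent-complex route requires a rigidity theorem for $\mathrm{L}_{S/k}$ of essentially the same depth as (C2) itself. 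What this plan can realistically deliver, once the finiteness of $\pdim_S\hh_1(\mathrm{L}_{S/k})$ is in hand, is the implication ``(C1) $\Rightarrow$ (C2)'' in characteristic zero --- with the torsion nature of $\hh_1(\mathrm{L}_{S/k})$ carrying the geometric content --- together with the unconditional cases of (C2) falling under \cref{1.2}, \cref{1.3}, and the almost complete intersection theorem.
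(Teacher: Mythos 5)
This statement is a conjecture, and the paper contains no proof of it: the body of the text only records the cases in which (C2) is known (those tracking the known cases of (C1)), and \cref{se:recent} explains that even today (C2) is open, resting on the Eisenbud--Mazur conjecture. So there is nothing for your proposal to be measured against except the paper's own reduction sketch at the end of \cref{se:introduction} --- and your plan is essentially that sketch, written out in the language of the cotangent complex. The exact sequence $0\to \hh_1(\mathrm{L}_{S/k})\to I/I^2\xrightarrow{d} S^n\to\Omega_{S/k}\to 0$ is the paper's sequence $I/I^2\xrightarrow{\delta}\Omega_{R/k}\otimes_RS\to\Omega_{S/k}\to 0$ with $\ker(\delta)=\tau(I/I^2)$ identified by a rank count, and your observation that the kernel is torsion, supported on the singular locus, is exactly the paper's observation that the hypothesis forces $S$ reduced and equidimensional with $\rank\Omega_{S/k}=\dim S$. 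Your bookkeeping with syzygies ($\pdim_SJ<\infty$, hence $\pdim_S(I/I^2)<\infty$ once $\pdim_S\hh_1(\mathrm{L}_{S/k})<\infty$) is correct.

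The genuine gap is the one you name yourself, and it should be stressed that it is not a technical loose end but the entire difficulty: there is no known mechanism by which $\pdim_S\Omega_{S/k}<\infty$ forces $\tau(I/I^2)$ to have finite projective dimension, let alone to vanish, and ``playing the support condition against Auslander--Buchsbaum'' cannot work as stated, since torsion modules of finite projective dimension exist in abundance (e.g.\ $S/(x)$ over a domain). Likewise the variant route is circular: the propagation from $\pdim_S\Omega_{S/k}<\infty$ to $\hh_i(\mathrm{L}_{S/k})=0$ for $i\gg0$ is itself a rigidity statement of the same depth as (C2). The instructive comparison is with \cref{se:recent}: the sharpened form of (C1) proved there (Theorem~\ref{th:briggs}) does not ask for $\pdim_S(I/I^2)<\infty$, only for an $S$-linear map $\alpha\colon I/I^2\to N$ with $N$ of finite projective dimension such that $\alpha\otimes_Sk$ is injective. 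This replaces your hard step --- controlling the projective dimension of the torsion kernel --- by the strictly weaker Eisenbud--Mazur/Lenstra condition that $d$ be injective on minimal generators, which is precisely the intermediate statement to which (C2) is now known to reduce. So the honest conclusion of your plan is the conditional implication you state at the end, not the conjecture itself; and even that conditional is superseded by the unconditional reduction of (C2) to Eisenbud--Mazur.
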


Observe that the assumption implies already that $S$ is equidimensional and reduced and that $\rank \Omega_{S/k}=\dim S$.

There is a natural exact sequence relating $I/I^2$ and $\Omega_{S/k}$:
\begin{align*}
    &I/I^2\xrightarrow{\ \delta\ } \Omega_{R/k}\otimes_RS \longrightarrow \Omega_{S/k}\longrightarrow 0 \\
    &f+I^2\mapsto d(f)\otimes 1
\end{align*}

Counting ranks one finds that $\ker(\delta) = \tau(I/I^2)$. If it  happens that $I/I^2$ is torsionfree (see for instance \cref{2.4}), then $\delta$
is injective and (C1) is equivalent to (C2).

Although it is not quite clear how (C1) is related to (C2) in general, (C2) can be proved also in all the above mentioned cases. In the following we focus on (C1), since this conjecture can be formulated in more general situations.

\section{Linkage of ideals}
\label{se:linkage}

In this section we suppose that $R$ is a local Gorenstein ring. We show that with the technique of linkage, introduced by C.~Peskine and L.~Szpiro~\cite{Peskine/Szpiro:1974} one can produce a large class of examples of ideals for which (C1) is true.

\begin{definition}
    \label{2.1}
    Let $I$ be an ideal of grade $g$. The ideal $J$ is \emph{linked} to $I$ if $I$ contains a regular sequence $\boldsymbol{x}\coloneqq x_1,\dots,x_g$ with $I\supseteq (\boldsymbol{x})$ and $I_\fp=(\boldsymbol{x})_\fp$ for all minimal primes of $I$, and such that $J=((\boldsymbol{x}):I)$.
\end{definition}

We need the following result which is similar to \cite[2.6]{Peskine/Szpiro:1974}.

\begin{theorem}
    \label{2.2}
    Suppose $J$ is linked to $I$. If $I$ is perfect of grade $g$ then also $J$ is perfect of grade $g$ and $I=((\boldsymbol{x}):J)$.
\end{theorem}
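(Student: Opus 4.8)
The plan is to reduce modulo the regular sequence $\boldsymbol x$ and exploit canonical-module duality over the Gorenstein ring $A\coloneqq R/(\boldsymbol x)$. Write $\bar I\coloneqq I/(\boldsymbol x)$ and $\bar J\coloneqq J/(\boldsymbol x)$, so that $R/I=A/\bar I$, $R/J=A/\bar J$, and the hypothesis $J=((\boldsymbol x):I)$ becomes $\bar J=(0:_A\bar I)=\Hom_A(R/I,A)$. Since $\boldsymbol x$ is an $R$-regular sequence of length $g=\grade I$ in the Gorenstein ring $R$, the ring $A$ is Gorenstein of dimension $\dim R-g$; it has a Koszul resolution over $R$, whence $\ext^i_R(A,R)=0$ for $i\neq g$ and $\ext^g_R(A,R)\cong A$. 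As $I$ is perfect of grade $g$, the module $R/I$ is Cohen--Macaulay of codimension $g$ over $R$, so $\depth_A(R/I)=\depth_R(R/I)=\dim R-g=\dim A$; thus $R/I$ is a maximal Cohen--Macaulay $A$-module, and then so is its $A$-dual $\bar J=\Hom_A(R/I,A)$, with the biduality map $R/I\to\Hom_A(\bar J,A)$ an isomorphism.

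Next I would combine these facts with the change-of-rings isomorphism $\ext^{p+g}_R(N,R)\cong\ext^p_A(N,A)$, valid for $A$-modules $N$ because $\ext^\bullet_R(A,R)$ is concentrated in degree $g$ (where it equals $A$). Since maximal Cohen--Macaulay modules over the Gorenstein ring $A$ have vanishing higher self-Ext, this yields $\ext^i_R(R/I,R)=0$ for $i\neq g$ with $\ext^g_R(R/I,R)\cong\bar J$, and likewise $\ext^i_R(\bar J,R)=0$ for $i\neq g$ with $\ext^g_R(\bar J,R)\cong\Hom_A(\bar J,A)\cong R/I$.

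Then I would feed the short exact sequence $0\to\bar J\to A\to R/J\to 0$ into $\ext^\bullet_R(-,R)$. Using the vanishing above, the long exact sequence collapses to $\ext^i_R(R/J,R)=0$ for $i\notin\{g,g+1\}$ together with
\[
0\longrightarrow\ext^g_R(R/J,R)\longrightarrow A\xrightarrow{\ \alpha\ }R/I\longrightarrow\ext^{g+1}_R(R/J,R)\longrightarrow 0 .
\]
By naturality of the change-of-rings isomorphism, $\alpha$ is $\ext^g_R(-,R)$ applied to the inclusion $\bar J\hookrightarrow A$, which under the identifications above is $\Hom_A(-,A)$ applied to $\bar J\hookrightarrow A$ followed by the biduality isomorphism $\Hom_A(\bar J,A)\cong R/I$; a diagram chase identifies $\alpha$ with the canonical surjection $A\twoheadrightarrow A/\bar I=R/I$. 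Hence $\ext^{g+1}_R(R/J,R)=0$ and $\ext^g_R(R/J,R)\cong\ker\alpha=\bar I=I/(\boldsymbol x)$. In particular $(0:_A\bar J)=\Hom_A(R/J,A)\cong\ext^g_R(R/J,R)=\bar I$, which is exactly $I=((\boldsymbol x):J)$. For perfection of $J$: dualizing a minimal $R$-free resolution of $R/I$ (of length $g$, by perfection) yields, in view of the Ext vanishing, an $R$-free resolution of $\bar J$ of length $\le g$, so $\pdim_R(\bar J)\le g$; the sequence $0\to\bar J\to R/(\boldsymbol x)\to R/J\to 0$ then gives $\pdim_R(R/J)\le g+1<\infty$, and combined with $\ext^i_R(R/J,R)=0$ for $i>g$ and $\ext^g_R(R/J,R)\neq 0$ this forces $\pdim_R(R/J)=g=\grade(J)$. (The degenerate case $I=(\boldsymbol x)$ is trivial and may be set aside.)

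I expect the main obstacle to be the identification of the comparison map $\alpha$ with the canonical projection $A\twoheadrightarrow R/I$: this is the one step requiring care about the naturality of the change-of-rings isomorphism and about which duality identifications are in play, and it is precisely what simultaneously produces the surjectivity of $\alpha$ (hence the vanishing of $\ext^{g+1}_R(R/J,R)$ and the perfection of $J$) and the equality $I=((\boldsymbol x):J)$. Everything else is routine manipulation of long exact sequences together with standard properties of maximal Cohen--Macaulay modules over Gorenstein local rings.
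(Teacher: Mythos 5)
Your proposal is correct, but it follows a genuinely different route from the paper. The paper's proof is constructive: it takes a comparison map $\varphi\colon K(\boldsymbol{x};R)\to F$ from the Koszul complex to a minimal free resolution of $S=R/I$, forms the mapping cone $C$, and shows that $C^*$ (after cancelling the split-injective leftmost differential) is a free resolution of $T=R/J$ of length $g$ --- this gives perfection directly, and the identification $\hh_{-g-1}(C^*)\cong R/J$ rests on the same duality $\ext^g_R(S,R)\cong\Hom_R(S,R/(\boldsymbol{x}))\cong J/(\boldsymbol{x})$ that drives your computation; the equality $I=((\boldsymbol{x}):J)$ is then quoted from local duality over $R/(\boldsymbol{x})$. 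You instead stay entirely inside $\ext_R(-,R)$, using the Rees change-of-rings isomorphism and maximal Cohen--Macaulay duality over $A=R/(\boldsymbol{x})$: perfection falls out of the vanishing $\ext^{i}_R(R/J,R)=0$ for $i>g$ together with the finiteness of $\pdim_R(R/J)$ extracted from $0\to \bar J\to A\to R/J\to 0$, and the double-linkage equality comes from making the biduality over $A$ explicit --- in effect you prove the local-duality statement the paper merely cites. The paper's mapping cone buys an explicit dual resolution of $R/J$ (the standard workhorse of linkage theory, reused later in the notes); your argument buys a uniform derivation of both conclusions from a single four-term exact sequence, at the price of the careful identification of $\alpha$ with the canonical surjection $A\twoheadrightarrow A/\bar I$, which you rightly flag as the delicate step and which does go through by naturality of the Rees isomorphism. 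Two minor points to make explicit: $\grade J\ge g$ because $(\boldsymbol{x})\subseteq J$, which combined with $\grade J\le\pdim_R(R/J)=g$ finishes the perfection claim; and the nondegeneracy $I\ne(\boldsymbol{x})$, which you set aside, is exactly what the paper's opening paragraph uses to pin down $\grade J=g$.
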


For the convenience of the reader we give here a short proof.

\begin{proof}
We have $\grade J\ge g$, since $(\boldsymbol{x})\subseteq J$. Suppose $\grade J\ge g+1$. Then there exists an element $y\in J$ which is regular modulo $(\boldsymbol{x})$. Since $yI\subseteq (\boldsymbol{x})$ this would imply that $I=(\boldsymbol{x})$, a contradiction. We put $S=R/I$ and $T=R/J$. To prove that $J$ is perfect we need only to show that $\pdim_R (T)\leq g$. Let
\[
F\coloneqq 0\longrightarrow F_g\longrightarrow \cdots \longrightarrow F_1\longrightarrow F_0\longrightarrow 0
\]
be a minimal free $R$-resolution of $S$ and let $K=K(\boldsymbol{x};R)$ be the Koszul complex on the sequence $\boldsymbol{x}$. We have a comparison map $\varphi\colon K\to F$ with $\varphi_0=\idmap_R$. Let $C$ be the mapping cone of $\varphi$. The dual complex
\[
C^* =  0\longrightarrow F_0^*\longrightarrow F_1^*\oplus K_0^*\longrightarrow \cdots 
    \longrightarrow F_g^*\oplus K^*_{g-1}\longrightarrow K_g^* \longrightarrow 0
\]
is a free $R$-resolution of $T$. In fact, we have an exact sequence of complexes
\[
0\longrightarrow K^*[-1]\longrightarrow C^*\longrightarrow F^*\longrightarrow 0
\]
where $ K^*[-1]$ means $K^*$ shifted to the right by $1$. From this it is immediate that $C^*$ is acyclic, except in degree $-g-1$. We also obtain the exact sequence
\[
0\longrightarrow \hh_{-g}(F^*)\longrightarrow \hh_{-g}(K^*)\longrightarrow \hh_{-g-1}(C^*)\longrightarrow 0\,.
\]
Since $\hh_{-g}(K^*)\cong R/(\boldsymbol{x})$ and 
\[
\hh_{-g}(F^*)\cong \ext^g_R(S,R)\cong \Hom_R(S,R/(\boldsymbol{x}))\cong J/(\boldsymbol{x})\,,
\]
it follows that $\hh_{-g-1}(C^*)\cong R/J\cong T$. The left-most differential in $C^*$ is split injective, hence can be canceled. Thus $\pdim_R(T)\le g$.

The equality $I=((\boldsymbol{x}):J)$ follows easily from local duality: Apply \cite[6.8]{Herzog/Kunz:1971} to $R/(\boldsymbol{x})$ and the module $J/(\boldsymbol{x})$.
\end{proof}

It is clear that $J$ coincides with $(\boldsymbol{x})$ generically if $I$ does. Thus linkage is a symmetric relation. It is however neither reflexive nor transitive. Let $\sim$ denote the smallest equivalence relation that contains the linkage relation. We say $J$ \emph{belongs to the linkage class of} $I$ if $J\sim I$.

\begin{theorem}
    \label{2.3}
    If $I$ belongs to the linkage class of a complete intersection and $\pdim_S (I/I^2)<\infty$, then $I$ is a complete intersection.
\end{theorem}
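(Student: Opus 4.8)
The plan is to argue by induction on the length $n$ of a shortest chain $I = I_0 \sim I_1 \sim \dots \sim I_n$ linking $I$ to a complete intersection $I_n$. The case $n=0$ is vacuous, so suppose $n \ge 1$ and write $I = I_0$ as directly linked to $J \coloneqq I_1$ by a regular sequence $\boldsymbol{x} = x_1, \dots, x_g$ with $g = \grade I$; by \cref{2.2} both $I$ and $J$ are perfect of grade $g$, and $J$ is joined to a complete intersection by a chain of length $n-1$. Once I know $\pdim_T(J/J^2) < \infty$, where $T = R/J$, the induction hypothesis makes $J$ a complete intersection, and then $I = (\boldsymbol{x}):J$ is directly linked to a complete intersection. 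In that situation $I/(\boldsymbol{x}) \cong \ext^g_R(R/J,R)$ is the canonical module of $R/J$, which is cyclic because $R/J$ is Gorenstein; hence $I = (\boldsymbol{x}) + (z)$ for a single element $z$, so $\mu_R(I) \le g+1 = \grade(I)+1$ and $I$ is an almost complete intersection. Since $\pdim_S(I/I^2) < \infty$ by hypothesis, the theorem of Aoyama and Matsuoka stated above then forces $I$ to be a complete intersection, closing the induction.

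Thus everything reduces to the transfer step: \emph{if $I$ and $J$ are perfect of grade $g$, linked by $\boldsymbol{x}$, and $I$ is not itself a complete intersection, then $\pdim_S(I/I^2) < \infty$ forces $\pdim_T(J/J^2) < \infty$}. (Only this restricted form is used, since in the induction $I = I_0$ always sits at the top of a chain of positive length.) To prove it I would descend to the Gorenstein local ring $\bar R \coloneqq R/(\boldsymbol{x})$ and the ideals $\bar I \coloneqq I/(\boldsymbol{x})$ and $\bar J \coloneqq J/(\boldsymbol{x})$, which satisfy $\bar I = (0 :_{\bar R} \bar J)$, $\bar J = (0 :_{\bar R} \bar I)$, $\bar I \bar J = 0$, together with $\bar J \cong \Hom_{\bar R}(S,\bar R) \cong \omega_S$ and $\bar I \cong \Hom_{\bar R}(T,\bar R) \cong \omega_T$ as $S$- and $T$-modules respectively. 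Tensoring $0 \to (\boldsymbol{x}) \to I \to \bar I \to 0$ with $S$ over $R$, and using that the $x_i$ map to $0$ in $S$, so that $(\boldsymbol{x}) \otimes_R S \cong \tor_1^R(R/(\boldsymbol{x}),S) \cong S^g$, yields an exact sequence of $S$-modules
\[
\tor_1^R(\bar I, S) \longrightarrow S^g \longrightarrow I/I^2 \longrightarrow \bar I/\bar I^2 \longrightarrow 0,
\]
and symmetrically for $J$. This moves the problem onto the conormal modules of the linked pair $\bar I \cong \omega_T$, $\bar J \cong \omega_S$ inside $\bar R$, where I would use the self-duality of $\bar R$---the sequences $0 \to \bar J \to \bar R \to T \to 0$ and $0 \to \bar I \to \bar R \to S \to 0$, and the duality of minimal free resolutions established in the proof of \cref{2.2}---to convert finite projective dimension over $S$ into finite projective dimension over $T$, once the degenerate cases (where $I$ is itself an almost complete intersection) have been separated off and dealt with directly by the Aoyama--Matsuoka theorem.

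The hard part is exactly this transfer. One must be cautious, because the naive guess that finite projective dimension of the conormal module is a linkage invariant is false: a complete intersection of grade $\ge 2$ can be directly linked to a genuine almost complete intersection, whose conormal module has infinite projective dimension by Aoyama--Matsuoka. So the hypothesis $\pdim_S(I/I^2) < \infty$ has to be exploited on the \emph{more complicated} partner of the link, in combination with the structural constraints on $\bar I$ and $\bar J$ as reflexive ideals of the Gorenstein ring $\bar R$. (An alternative route would be to feed in the later fact that $I$ is forced to be a Gorenstein ideal---discussed after \cref{1.3}---and then prove that a licci Gorenstein ideal is a complete intersection; but the linkage induction above stays within the toolkit of this section.) Everything else---the induction on chain length, the reduction to an almost complete intersection at the final link, and the invocation of the Aoyama--Matsuoka theorem---is routine.
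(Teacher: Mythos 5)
Your outline has a genuine gap at the step you yourself flag as ``the hard part'': the transfer claim that if $I$ and $J$ are directly linked and $\pdim_S(I/I^2)<\infty$ then $\pdim_T(J/J^2)<\infty$. You never prove this; you only set up the exact sequences
\[
\tor_1^R(\bar I, S) \longrightarrow S^g \longrightarrow I/I^2 \longrightarrow \bar I/\bar I^2 \longrightarrow 0
\]
and gesture at the self-duality of $\bar R$. As you correctly observe, the statement is false without the side condition that $I$ is not a complete intersection, and with that side condition it is essentially equivalent to the theorem being proved (restricted to licci ideals): granting Conjecture~(C1) the hypothesis of your transfer step is vacuous, and without granting it there is no visible mechanism in your sketch that converts finite projective dimension over $S$ into finite projective dimension over $T$. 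The surrounding scaffolding (induction on chain length, \cref{2.2}, the observation that an ideal linked to a complete intersection is an almost complete intersection, and the Aoyama--Matsuoka theorem at the last link) is fine, but it all hangs on an unproved and probably unprovable-by-these-means lemma.

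The paper avoids exactly this difficulty by transferring a \emph{different} invariant along the linkage chain. By Buchweitz's theorem (\cref{2.4}), Cohen--Macaulayness of $I\otimes_R K_S$ \emph{is} a linkage invariant among perfect ideals; it holds trivially at the complete-intersection end of the chain (there $I\otimes_R K_S\cong I/I^2$ is free), hence propagates to $I$ itself. The hypothesis $\pdim_S(I/I^2)<\infty$ is then used only once, at the end, via \cref{2.5}: a perfect ideal with $I\otimes_R K_S$ Cohen--Macaulay and conormal module of finite projective dimension is a complete intersection. If you want to salvage your approach you would need to either prove your transfer lemma outright or replace it with an invariant, like the one in \cref{2.4}, that genuinely passes through links.
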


Let $K_P$ denote the canonical module of a local ring $P$, see \cite{Herzog/Kunz:1971}. The above theorem is a consequence of the following two results.

\begin{theorem}[R.~O.~Buchweitz~\cite{Buchweitz:1981}]
    \label{2.4}
    Suppose $I$ and $J$ are perfect ideals belonging to the same linkage class. Let $S=R/I$ and $T=R/J$. Then $I\otimes_RK_S$ is a Cohen-Macaulay module if and only if $J\otimes_RK_T$ is a Cohen-Macaulay module.
\end{theorem}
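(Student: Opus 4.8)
The plan is to reduce to the case of a single link and then to transport the question down to the Gorenstein quotient $\bar R\coloneqq R/(\boldsymbol{x})$. Since $\sim$ is generated by single links, since \cref{2.2} propagates perfectness along any chain of links, and since the assertion is an equivalence, it is enough to treat the case that $J$ is directly linked to $I$: there is a regular sequence $\boldsymbol{x}=x_1,\dots,x_g$ (with $g=\grade I$) inside $I$ such that $J=((\boldsymbol{x}):I)$, and by \cref{2.2} also $I=((\boldsymbol{x}):J)$, with $I$ and $J$ perfect of grade $g$. I would first unwind the genericity hypothesis $I_\fp=(\boldsymbol{x})_\fp$: it forces $\height I=g$, hence $\dim S=\dim R-g=\dim\bar R$, and makes $(I/I^2)_\fp$ free of rank $g$ over $S_\fp$ for every minimal prime $\fp$ of $S$; thus $(I/I^2)\otimes_S K_S$ has full-dimensional support, so that ``Cohen--Macaulay'' for this module means the same as ``maximal Cohen--Macaulay over $\bar R$'', and likewise for $J$.

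The second step is a homological identification. Because $K_S$ is an $S$-module, $IK_S=0$, so the exact sequence $0\to I\to R\to S\to 0$ produces natural isomorphisms
\[
(I/I^2)\otimes_S K_S\;\cong\; I\otimes_R K_S\;\cong\;\tor_1^R(S,K_S)\,,
\]
and dualizing a minimal $R$-free resolution of $S$ even gives $\tor_1^R(S,K_S)\cong\ext^{g-1}_R(S,S)$, a first symmetric-looking shadow of the statement. Passing to $\bar R$, Rees' change of rings gives $K_S\cong\ext^g_R(S,R)\cong\Hom_{\bar R}(S,\bar R)=(0:_{\bar R}\bar I)=\bar J$, where $\bar I\coloneqq I/(\boldsymbol{x})$ and $\bar J\coloneqq J/(\boldsymbol{x})$, and symmetrically $K_T\cong\bar I$. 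From $0\to\bar I\to\bar R\to S\to 0$ together with $\bar I\bar J=0$ one obtains in the same way $\tor_1^{\bar R}(S,\bar J)\cong\bar I\otimes_{\bar R}\bar J$.

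Now I would transfer to $\bar R$ and finish by symmetry. Since $\boldsymbol{x}$ is an $R$-regular sequence annihilating both $S$ and $K_S=\bar J$, the change-of-rings spectral sequence $\tor_p^{\bar R}(S,\bar J)^{\oplus\binom{g}{q}}\Rightarrow\tor_{p+q}^R(S,\bar J)$ exhibits $\tor_1^R(S,K_S)$ as an extension of $\bar I\otimes_{\bar R}\bar J$ by a quotient of $(S\otimes_{\bar R}\bar J)^{\oplus g}=K_S^{\oplus g}$, a maximal Cohen--Macaulay $\bar R$-module. Granting that this auxiliary piece is again maximal Cohen--Macaulay over $\bar R$, it follows that $(I/I^2)\otimes_S K_S$ is Cohen--Macaulay if and only if $\bar I\otimes_{\bar R}\bar J$ is maximal Cohen--Macaulay over $\bar R$. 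The identical argument applied to $J$ shows that $(J/J^2)\otimes_T K_T$ is Cohen--Macaulay if and only if $\bar J\otimes_{\bar R}\bar I$ is maximal Cohen--Macaulay over $\bar R$; since $\bar I\otimes_{\bar R}\bar J=\bar J\otimes_{\bar R}\bar I$, the two conditions coincide, and the theorem follows.

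The main obstacle is precisely the step flagged above: one must show that the part of $\tor_1^R(S,K_S)$ complementary to the symmetric core $\bar I\otimes_{\bar R}\bar J$ --- the remaining graded pieces of the spectral-sequence filtration, all manufactured out of the canonical module $K_S$ --- is maximal Cohen--Macaulay over $\bar R$, so that it cannot spoil the Cohen--Macaulay property of the extension. This is where the genericity hypothesis on $\boldsymbol{x}$ has to be invoked a second time, so that those correction terms are genuine (split-off) copies of $K_S$ rather than arbitrary quotients of $K_S^{\oplus g}$. Everything else is routine bookkeeping with linkage, with canonical modules over Gorenstein rings, and with standard change-of-rings exact sequences.
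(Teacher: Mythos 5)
Your reduction to a single link, the identifications $I\otimes_RK_S\cong\tor_1^R(S,K_S)$, $K_S\cong\bar J$, $K_T\cong\bar I$, and $\tor_1^{\bar R}(S,\bar J)\cong\bar I\otimes_{\bar R}\bar J$ are all correct, and the idea of locating a symmetric core $\bar I\otimes_{\bar R}\bar J$ inside both modules is attractive. But the step you defer is not ``routine bookkeeping'' --- it is the entire content of the theorem, and as stated it does not close. Two things go wrong. First, you give no argument that $\coker\bigl(d_2\colon \tor_2^{\bar R}(S,\bar J)\to K_S^{\oplus g}\bigr)$ is maximal Cohen--Macaulay; the differential $d_2$ is essentially an Eisenbud operator and is not zero in general, and the genericity hypothesis $I_\fp=(\boldsymbol{x})_\fp$ (whose actual role is to give $I\otimes_RK_S$ rank $g$) offers no visible mechanism for controlling its cokernel, let alone for splitting it off. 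Second, even granting that this graded piece is maximal Cohen--Macaulay, an extension $0\to A\to B\to C\to 0$ with $A$ maximal Cohen--Macaulay only yields ``$C$ CM $\Rightarrow$ $B$ CM''; the converse can fail because $\hh^{d-1}_\fm(B)$ is only the kernel of a connecting map $\hh^{d-1}_\fm(C)\to\hh^{d}_\fm(A)$, which may differ between the $I$-side and the $J$-side. So you would genuinely need the filtration to split, which is exactly what you do not prove.

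The paper's proof avoids both difficulties by never comparing the modules themselves, only their lengths after cutting by a system of parameters. It invokes the criterion that a module $M$ of rank $r$ over a Cohen--Macaulay ring is Cohen--Macaulay if and only if $\ell(M/(\boldsymbol{s})M)=r\cdot\ell(S/(\boldsymbol{s})S)$, thereby reducing to the Artinian case, where the assertion becomes the numerical identity $\ell(I\otimes_RK_S)-g\,\ell(S)=\ell(J\otimes_RK_T)-g\,\ell(T)$. That identity is extracted from the two long exact sequences obtained by tensoring $0\to K_T\to\bar R\to S\to 0$ (and its mirror) with $T$ and with $S$, together with the symmetry $\tor_i^R(S,T)\cong\tor_i^R(T,S)$ and the key observation $\im(\alpha_i)=\im(\beta_i)$, proved by factoring both through the surjections from $\tor_i^R(\bar R,\bar R)$ computed via the Koszul complex. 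On the level of lengths, extensions contribute additively, so the question of whether they split never arises. If you want to rescue your approach, you would need to prove the splitting of the change-of-rings filtration on $\tor_1^R(S,K_S)$; short of that, the argument has a genuine gap at its central step.
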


\begin{theorem}[\protect{\cite[4.1]{Herzog:1980}}]
    \label{2.5}
    If $I$ is a perfect ideal such that $I\otimes_RK_S$ is a Cohen-Macaulay module and such that $\pdim_S (I/I^2)<\infty$, then $I$ is a complete intersection.
\end{theorem}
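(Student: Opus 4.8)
The plan is to reduce the assertion to the statement that the conormal module $I/I^2$ is free over $S$, and then to prove that by a local-duality argument. Since $R$ is Gorenstein and $I$ is perfect of grade $g$, the ring $S$ is Cohen--Macaulay and has a canonical module $\omega:=K_S\cong\ext^g_R(S,R)$, a maximal Cohen--Macaulay $S$-module. As $K_S$ is an $S$-module it is killed by $I$, so the natural surjection $I\otimes_RK_S\twoheadrightarrow(I/I^2)\otimes_SK_S$ is an isomorphism; writing $M:=I/I^2$, the hypothesis thus says that $M\otimes_S\omega$ is maximal Cohen--Macaulay. By \cref{th:ferrand-vasconcelos} it is enough to show $M$ is free, and since $S$ is Cohen--Macaulay the Auslander--Buchsbaum formula reduces this to $\depth_S M=\dim S$. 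Note also that $M_\fp$ is free over $S_\fp$ for every minimal prime $\fp$ of $S$, because $S_\fp$ is Artinian and $\pdim_{S_\fp}M_\fp<\infty$ (or: apply \cref{th:ferrand-vasconcelos} over $R_\fp$); hence $\tor^S_i(M,\omega)$ is a torsion $S$-module for each $i\ge1$, and, $S$ being equidimensional, has dimension strictly less than $\dim S$.

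Put $d:=\dim S$, let $E$ be the injective hull of the residue field, and write $(-)^\vee:=\Hom_S(-,E)$. Because $\omega$ is maximal Cohen--Macaulay, $\mathrm R\Gamma_{\fm}(\omega)\simeq E[-d]$, so
\[
\mathrm R\Gamma_{\fm}(M\otimes^{\mathrm L}_S\omega)\;\simeq\; M\otimes^{\mathrm L}_S\mathrm R\Gamma_{\fm}(\omega)\;\simeq\;(M\otimes^{\mathrm L}_SE)[-d],
\]
whence $\hh^n_{\fm}(M\otimes^{\mathrm L}_S\omega)\cong\tor^S_{d-n}(M,E)\cong\ext^{d-n}_S(M,S)^\vee$. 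Since $\ext^{\pdim_S M}_S(M,S)\neq0$ whenever $\pdim_S M\ge1$ (Nakayama applied to the dual of a minimal free resolution), it therefore suffices to prove $\hh^n_{\fm}(M\otimes^{\mathrm L}_S\omega)=0$ for all $n<d$. For this one uses the hypercohomology spectral sequence $E_2^{s,t}=\hh^s_{\fm}\!\big(\tor^S_{-t}(M,\omega)\big)\Rightarrow\hh^{s+t}_{\fm}(M\otimes^{\mathrm L}_S\omega)$: the hypothesis that $M\otimes_S\omega$ is maximal Cohen--Macaulay wipes out the entire $t=0$ column in cohomological degrees $<d$, and the dimension bound on the higher $\tor$'s wipes out every entry with $t<0$ in cohomological degrees $\ge d$. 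Reading off degree $n=d-1$ gives $\ext^1_S(M,S)=0$ with no further input; combined with a minimal-generator count --- here $\tor^S_1(M,\omega)=0$ because it is a torsion submodule of a direct sum of copies of $\omega$ --- this already settles the cases $\pdim_S M\le1$ (recovering \cref{1.2} under the extra hypothesis).

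It remains to treat $\pdim_S M\ge2$; equivalently, to show directly that $M=I/I^2$ is maximal Cohen--Macaulay, or that $\tor^S_i(M,\omega)=0$ for every $i\ge1$. Two complementary inductions should help: an induction on $\dim S$, reducing modulo an element of $\fm$ that is a nonzerodivisor on $R$, $S$, $M$, $\omega$ and $M\otimes_S\omega$ (chosen, with a little care, so that the image of $I$ remains perfect) when $\depth_S M\ge1$, which has the trivial base case $\dim S=0$ and reduces everything to the case $\depth_S M=0$; and an induction on $\pdim_S M$, passing from $M$ to its first syzygy $\Omega^1M$, whose tensor product with $\omega$ is again maximal Cohen--Macaulay once $\tor^S_1(M,\omega)=0$ (use $0\to\tor^S_1(M,\omega)\to\Omega^1M\otimes_S\omega\to K\to0$ with $K$ maximal Cohen--Macaulay). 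Either way the crux, and the main obstacle, is the vanishing of $\tor^S_1(M,\omega)$: applying the Acyclicity Lemma of Peskine--Szpiro to $F_\bullet\otimes_S\omega$, where $F_\bullet$ is a minimal $S$-free resolution of $M$ (all of whose terms tensored with $\omega$ have depth $d\ge\pdim_S M$), reduces the full vanishing to the statement that each nonzero $\tor^S_i(M,\omega)$ has depth $0$, and this is \emph{not} a formal consequence of the input used so far. I expect the real work to be an analysis of the conormal module itself --- the exact sequence $\hh_1\xrightarrow{\ \iota\ }S^n\longrightarrow I/I^2\longrightarrow0$ coming from $\kos(x_1,\dots,x_n;R)$ for a minimal generating set $x_1,\dots,x_n$ of $I$, together with the restrictions that perfectness of $I$ places on $\hh_1$ and hence on the syzygies of $M$, should be the key. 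Once the vanishing is established, $M\otimes^{\mathrm L}_S\omega=M\otimes_S\omega$ is maximal Cohen--Macaulay, so $\hh^n_{\fm}(M\otimes_S\omega)=\ext^{d-n}_S(M,S)^\vee=0$ for $n<d$, forcing $\pdim_S M=0$, and \cref{th:ferrand-vasconcelos} then concludes.
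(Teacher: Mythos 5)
The paper does not actually prove \cref{2.5}; it is quoted from the external reference \cite[4.1]{Herzog:1980}, so there is no internal argument to compare yours against. Judged on its own terms, your proposal contains a genuine gap, and you say so yourself. The formal bookkeeping is correct: $I\otimes_RK_S\cong (I/I^2)\otimes_SK_S$; the module $M=I/I^2$ is free at the minimal primes because $S_\fp$ is Artinian and $\pdim_{S_\fp}M_\fp<\infty$, so the higher $\tor^S_i(M,K_S)$ are torsion; local duality converts the desired conclusion into $\hh^n_{\fm}(M\otimes^{\mathrm L}_SK_S)=0$ for $n<d$; and the spectral sequence does yield $\ext^1_S(M,S)=0$, hence the case $\pdim_SM\le 1$ --- but that case is already \cref{1.2}, with no hypothesis on $I\otimes_RK_S$ at all. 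Everything beyond $\pdim_SM\le 1$ hinges on the vanishing of $\tor^S_i(M,K_S)$ for $i\ge 1$ (equivalently, on $M$ being maximal Cohen--Macaulay), and this implication --- from ``$M\otimes_SK_S$ is Cohen--Macaulay and $\pdim_SM<\infty$'' to ``$M$ is Cohen--Macaulay'' --- is precisely the content of the theorem. Your dimension-induction correctly isolates the hard case $\depth_SM=0<\dim S$, and your Peskine--Szpiro reduction correctly isolates the needed depth statement, but neither is resolved; the paragraph beginning ``I expect the real work to be an analysis of the conormal module itself'' is an admission that the proof is not there.

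For what it is worth, the route suggested by the paper's own proof of \cref{2.4} is different from the $\tor$-vanishing strategy you pursue: one invokes the length criterion of \cite[1.1]{Herzog:1978} quoted there --- a module $N$ with rank over a Cohen--Macaulay ring is Cohen--Macaulay if and only if $\ell(N/(\boldsymbol{s})N)=\rank(N)\cdot\ell(S/(\boldsymbol{s})S)$ for a system of parameters $\boldsymbol{s}$ --- and compares lengths after reduction modulo $\boldsymbol{s}$, where Matlis duality over the Artinian quotient turns $\overline{M}\otimes\overline{K_S}$ into $\Hom(\overline{M},\overline{S})$ and the finite projective dimension of $M$ can be brought to bear. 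That mechanism sidesteps the $\tor$-rigidity question entirely, which is worth noting because the general statement ``$\pdim_SM<\infty$ and $M\otimes_SK_S$ maximal Cohen--Macaulay imply $\tor^S_{{>}0}(M,K_S)=0$'' is exactly the kind of depth-formula rigidity that is delicate and should not be assumed.
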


Under the same hypotheses as \cref{2.3}, it follows from \cref{3.4,3.5} of the next section that $I$ is a Gorenstein ideal, and hence that $I\otimes_R K_S\cong I/I^2$. Knowing this,  \cref{2.3} also follows directly from \cref{2.4}. 

\begin{proof}[Proof of \cref{2.4}]
We will use the following Cohen-Macaulay criterion \cite[1.1]{Herzog:1978}: Let $S$ be a Cohen-Macaulay ring, $M$ an $S$-module with rank and $\boldsymbol{s}$ a system of parameters of $S$. Then $M$ is Cohen-Macaulay if and only if 
\[
\ell (M/(\boldsymbol{s})M) = \rank(M) \cdot \ell (S/(\boldsymbol{s})S).
\]
The $S$-module $I\otimes_RK_S\cong I/I^2\otimes_S K_S$ has rank $g=\grade I$, since we assume that $I$ is generically a complete intersection.

Applying the criterion above we need only to show the following: When $I\subset R$ is primary to the maximal ideal of $R$ and $J=((\boldsymbol{x}):I)$ is linked to $I$, then
\[
\ell(I\otimes_R K_S) -g\cdot \ell(S) = \ell(J\otimes_R K_T) -g\cdot \ell(T),
\]
where $g=\grade I=\grade J =$ dimension of $R$.

To check this, we put $\overline{R}=R/(\boldsymbol{x})$ and obtain an exact sequence
    \[
    0\longrightarrow K_T \longrightarrow \overline{R}\longrightarrow S\longrightarrow 0\,.
    \]
In fact, $K_T\cong \Hom_R(T,\overline{R})\cong I/(\boldsymbol{x})$. We tensor this sequence with $T$ and obtain a long exact sequence
    \[
    \tor_2^R(\overline{R},T)\xrightarrow{\alpha_2} \tor_2^R(S,T)\to \tor^R_1(K_T,T) \to
        \tor_1^R(\overline{R},T)\xrightarrow{\alpha_1} \tor_1^R(S,T)\to 
    \]
By symmetry there is a similar exact sequence
    \[
    \tor_2^R(\overline{R},S)\xrightarrow{\beta_2} \tor_2^R(T,S)\to \tor^R_1(K_S,S)\to
        \tor_1^R(\overline{R},S)\xrightarrow{\beta_1} \tor_1^R(T,S)\to 
    \]
Moreover one has isomorphisms 
    \begin{gather*}
    I\otimes_RK_S\cong \tor^R_1(K_S,S)\quad\text{and}\quad  J\otimes_R K_T \cong \tor^R_1(K_T,T)\\
     S^g \cong \tor^R_1(\overline{R},S)  \quad\text{and}\quad T^g \cong \tor^R_1(\overline{R},T)\,.
    \end{gather*}
Thus the assertion follows once we have shown that $\im(\alpha_i)=\im(\beta_i)$.

To see this consider the following diagram, that commutes up to a sign, where all the homomorphisms are just the natural ones:
    \[
    \begin{tikzcd}[row sep=tiny]
        &\tor^R_i(\overline{R},T) \arrow[r,"\alpha_i"] & \tor^R_i(S,T)\arrow[dd, leftrightarrow,"\cong"] \\
    \tor^R_i(\overline{R},\overline{R})\arrow[ur,"\sigma_i"] \arrow[dr,"\tau_i" swap]& \\
        &\tor^R_i(\overline{R},S) \arrow[r,"\beta_i" swap] & \tor^R_i(T,S)
    \end{tikzcd}
    \]
Using the Koszul complex $K(\boldsymbol{x},R)$, which resolves $\overline{R}$, to compute $\tor_i^R(\overline{R},\overline{R})$, $\tor_i^R(\overline{R},S)$, and $\tor_i^R(\overline{R},T)$ one sees that $\sigma_i$ and $\tau_i$ are surjective. Hence the conclusion.
\end{proof}

\begin{corollary}
\label{2.6}
If $I$ belongs to the linkage class of a complete intersection and $\pdim_S\Omega_{R/k}<\infty$, then $I$ is a complete intersection.
\end{corollary}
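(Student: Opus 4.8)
The plan is to deduce \cref{2.6} from \cref{2.3} by transporting the finiteness of projective dimension along the conormal sequence
\[
I/I^2\xrightarrow{\ \delta\ }\Omega_{R/k}\otimes_R S\longrightarrow \Omega_{S/k}\longrightarrow 0
\]
from the introduction (I read the hypothesis as $\pdim_S\Omega_{S/k}<\infty$, since with $\Omega_{R/k}$ the statement is vacuous, $\Omega_{R/k}\otimes_R S\cong S^n$ being free). As $R=k[x_1,\dots,x_n]_\fp$ is regular, the middle term is free of rank $n$; so, \emph{once $\delta$ is known to be injective}, this is a short exact sequence $0\to I/I^2\to S^n\to \Omega_{S/k}\to 0$ with free middle term, and $\pdim_S\Omega_{S/k}<\infty$ forces $\pdim_S(I/I^2)<\infty$ --- the sequence splits if $\Omega_{S/k}$ is free, and otherwise $I/I^2$ is a first syzygy of $\Omega_{S/k}$. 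Then \cref{2.3} applies and $I$ is a complete intersection.

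The substance of the corollary is therefore the injectivity of $\delta$. Since $\pdim_S\Omega_{S/k}<\infty$ forces $S$ to be reduced and equidimensional, the rank count recalled in the introduction gives $\ker\delta=\tau(I/I^2)$, so I must show the conormal module $I/I^2$ is torsion-free --- this is the point marked ``see for instance \cref{2.4}'' there. I would argue it as follows. Being in the linkage class of a complete intersection, $I$ is perfect by \cref{2.2} (so $S=R/I$ is Cohen--Macaulay) and generically a complete intersection; hence $I/I^2$ has rank $g=\grade I>0$ and so has dimension $\dim S$. Choosing a complete intersection $J$ in the linkage class of $I$ and setting $T=R/J$, the module $J\otimes_R K_T\cong J/J^2$ is $T$-free (as $T$ is Gorenstein, $K_T\cong T$, and $J/J^2$ is free since $J$ is a complete intersection), hence Cohen--Macaulay; so by Buchweitz's \cref{2.4} the module $I\otimes_R K_S$ is Cohen--Macaulay. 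By the remark following \cref{2.5} (that is, \cref{3.4,3.5}), $I$ is in fact a Gorenstein ideal, so $K_S\cong S$ and $I\otimes_R K_S\cong I/I^2$; thus $I/I^2$ is a Cohen--Macaulay $S$-module of dimension $\dim S$ over the Cohen--Macaulay ring $S$. Such a module cannot have torsion: its torsion submodule $\tau(I/I^2)$ is finitely generated, hence annihilated by a single nonzerodivisor of $S$, hence of dimension $<\dim S$, whereas a Cohen--Macaulay module is unmixed and all its associated primes have dimension $\dim S$. So $\ker\delta=0$ and the plan of the first paragraph carries through.

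The main obstacle is this torsion-freeness step, and within it the appeal to \cref{3.4,3.5} (forward references to \cref{se:platte}) to see that $I$ is Gorenstein: one has to check these apply under the hypotheses actually available --- $I$ perfect and in the linkage class of a complete intersection with $I\otimes_R K_S$ Cohen--Macaulay --- without circular recourse to $\pdim_S(I/I^2)<\infty$, which is exactly what we are trying to prove. If instead one simply takes as known that conormal modules of ideals in the linkage class of a complete intersection are torsion-free, then \cref{2.6} is immediate from the conormal sequence together with \cref{2.3}, as in the first paragraph, and the remaining content is routine homological bookkeeping.
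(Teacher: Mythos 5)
Your overall architecture is the same as the paper's: show $I/I^2$ is Cohen--Macaulay, hence torsion-free, so that $\delta$ in the conormal sequence is injective and $I/I^2$ becomes a first syzygy of $\Omega_{S/k}$ inside a free module, whence $\pdim_S(I/I^2)<\infty$ and \cref{2.3} applies. (You are also right to read the hypothesis as $\pdim_S\Omega_{S/k}<\infty$.) The reduction of torsion-freeness to Cohen--Macaulayness of $I/I^2$ is fine, as is the use of Buchweitz's \cref{2.4}: the complete intersection member $J$ of the linkage class has $J\otimes_RK_T\cong J/J^2$ free, so $I\otimes_RK_S$ is Cohen--Macaulay.

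The gap is exactly the step you flag yourself: passing from ``$I\otimes_RK_S$ is Cohen--Macaulay'' to ``$I/I^2$ is Cohen--Macaulay'' by asserting $K_S\cong S$. You derive this from the remark after \cref{2.5}, i.e.\ from \cref{3.4,3.5}; but \cref{3.4} has $\pdim_S(I/I^2)<\infty$ as a hypothesis, which is precisely what you are trying to establish, so the appeal is circular. Nor can Gorensteinness be extracted from the linkage hypothesis alone: a height-two almost complete intersection that is not a complete intersection is in the linkage class of a complete intersection yet has Cohen--Macaulay type $2$. The missing ingredient is \cref{3.6} (Platte's theorem for differentials): the hypothesis $\pdim_S\Omega_{S/k}<\infty$ by itself forces $S$ to be quasi-Gorenstein, and since $I$ is perfect (perfection propagates along links by \cref{2.2}) the ring $S$ is Cohen--Macaulay, hence Gorenstein, hence $K_S\cong S$ and $I\otimes_RK_S\cong I/I^2$. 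This is how the actual hypothesis of the corollary enters the torsion-freeness step. With \cref{3.6} substituted for \cref{3.4,3.5}, your argument closes and agrees with the paper's proof, which cites \cref{2.3} together with the relevant result of \cref{se:platte} to conclude that $I/I^2$ is Cohen--Macaulay, and then invokes the discussion at the end of \cref{se:introduction}.
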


\begin{proof}
From (2.3) and (3.17), in Section 3, we conclude that $I/I^2$ is a Cohen-Macaulay module. The assertion then follows from the discussion at the end of \cref{se:introduction}.            
\end{proof}

\section{A theorem of E.~Platte}
\label{se:platte}

In this section we present a recent result of E.~Platte~\cite{Platte:1980}. Besides our general assumptions from the introduction we will always assume that $R$ is a Gorenstein ring. As before, we assume that $I$ is an ideal in $R$ of finite projective dimension. The ring $S=R/I$ is called \emph{quasi-Gorenstein} if $\ext^g_R(S,R)\cong S$ where $g=\height I$. Of course we have
\[
\text{$S$ Gorenstein } \Longleftrightarrow \text{ $S$ quasi-Gorenstein and Cohen-Macaulay.}
\]

Here is Platte's result.

\begin{theorem}
\label{3.1}
With $R$ and $S$ as above, if the $S$-module $I/I^2$ has finite projective dimension, then the ring $S$ is quasi-Gorenstein.\footnote{This appears as \cite[Korollar 3]{Platte:1980}, in the generality of analytic algebras generically separable over a field.}
\end{theorem}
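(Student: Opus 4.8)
The plan is to translate the conclusion into a statement about the canonical module of $S$, and then to extract, from the finiteness of $\pdim_S(I/I^2)$, a Poincar\'e-type duality on the Koszul homology of a minimal generating set of $I$.

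\emph{Reformulation.} Since $R$ is Gorenstein, local duality identifies $\ext^i_R(S,R)$ with $\Hom_R(\hh^{\dim R-i}_{\fm}(S),E)$ for every $i$, where $E$ is the injective hull of the residue field; in particular $\ext^g_R(S,R)$ is the canonical module $\omega_S:=\Hom_R(\hh^{\dim S}_{\fm}(S),E)$, and the theorem is the assertion $\omega_S\cong S$. As a cheap first step, localize at a minimal prime $\fp$ of $I$: then $S_\fp$ is Artinian, so the finite projective dimension of $(I/I^2)_\fp$ forces it to be free, whence $I_\fp$ is a complete intersection by \cref{th:ferrand-vasconcelos}. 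Thus $S$ is generically Gorenstein; since $\omega_S\cong S$ would further require $S$ to be equidimensional and without embedded primes, proving this unmixedness will also have to be part of the argument.

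\emph{Using the hypothesis.} Fix a minimal generating set $\boldsymbol{x}=x_1,\dots,x_n$ of $I$, let $\kos=\kos(\boldsymbol{x};R)$ with Koszul homology $\hh_i$, and recall that $\hh_i=0$ for $i>n-g$ and that there is the conormal presentation $\hh_1\xrightarrow{\ \iota\ }S^n\xrightarrow{\ \varepsilon\ }I/I^2\to0$. I would use the self-duality $\Hom_R(\kos,R)\cong\kos[-n]$ together with a comparison map $\kos\to F$ over $\idmap_S$ into a minimal free resolution $F$ of $S$ (which exists since $F$ is acyclic): dualizing gives maps $\ext^i_R(S,R)\to\hh_{n-i}$, and in particular $\omega_S\to\hh_{n-g}$. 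The aim is then to deploy $\pdim_S(I/I^2)<\infty$ --- via the presentation $\varepsilon$ and the multiplicative structure of $\bigoplus_i\hh_i$ --- to force a Poincar\'e-duality pattern on the Koszul homology algebra, with $\hh_{n-g}$ free of rank one over $S$ and perfect $S$-pairings $\hh_i\otimes_S\hh_{n-g-i}\to\hh_{n-g}$; this should identify $\omega_S$ with $\hh_{n-g}\cong S$.

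\emph{Main obstacle.} The difficulty is entirely in the non-Cohen--Macaulay, non-complete-intersection range: when $I$ is not perfect one has $\ext^i_R(S,R)\neq0$ for several $i$ with $g<i\le\pdim_R S$, and $F$ is far from self-dual, so the comparison with $\kos$ does not by itself pin down $\ext^g_R(S,R)$. Controlling the intermediate Koszul homologies $\hh_2,\dots,\hh_{n-g-1}$ and the intermediate Ext modules simultaneously is precisely where the full strength of $\pdim_S(I/I^2)<\infty$ --- rather than merely $\pdim_R S<\infty$ --- must be used. I expect the cleanest route is an induction on $\dim S$, descending to a low-dimensional situation and running a length count in the spirit of the Cohen--Macaulay criterion $\ell(M/\boldsymbol{s}M)=\rank(M)\,\ell(S/\boldsymbol{s}S)$ used in the proof of \cref{2.4}, applied to $\omega_S$; the delicate point in such a descent is that passing from $R$ to $R/(y)$ for a regular element $y$ interacts badly with the conormal module, so that reduction must be arranged with care.
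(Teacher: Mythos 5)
Your preliminary reductions are fine: at any $\fp$ with $\depth S_\fp\le 1$ the Auslander--Buchsbaum formula gives $\pdim_{S_\fp}(I/I^2)_\fp\le 1$, so $(I/I^2)_\fp$ is free by \cref{1.2} and $I_\fp$ is a complete intersection by \cref{th:ferrand-vasconcelos}; this yields generic Gorensteinness, a well-defined rank $g$ for $I/I^2$, equidimensionality, and the absence of embedded primes. The gap is in the central step. Your plan is to show that the Koszul homology of a minimal generating set carries a Poincar\'e-duality structure with $\hh_{n-g}$ free of rank one, but no argument is offered for this, and none can be expected at this level of generality: since $\grade I=g$, one always has $\hh_{n-g}\cong \hh^g(\Hom_R(\kos,R))\cong\ext^g_R(S,R)$, so ``$\hh_{n-g}$ is free of rank one'' is \emph{verbatim} the conclusion of the theorem, and the full duality $\hh_i\otimes_S\hh_{n-g-i}\to\hh_{n-g}$ is the (strictly stronger) Gorenstein property of the Koszul homology algebra, which the hypotheses do not give. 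The proposal therefore begs the question exactly where the hypothesis $\pdim_S(I/I^2)<\infty$ has to do its work. The fallback --- induction on $\dim S$ with a length count on $\ext^g_R(S,R)$ --- is not developed, and you yourself name the obstruction that blocks it: the conormal module does not specialize along a regular element, and outside the Cohen--Macaulay range there is no system of parameters regular on $S$ against which to run such a count.

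For contrast, the paper's mechanism is a determinant trick on the conormal module, not a duality on Koszul homology. For an exact sequence $0\to N\to F\to M\to 0$ with $F$ free of rank $n$ and $M$ of rank $r$, free at all primes of depth $\le 1$, there is a canonical isomorphism $(\wedge^rM)^*\cong(\wedge^{n-r}N)^{**}$ (\cref{3.2}); iterating this along a finite free $S$-resolution of $I/I^2$ gives $(\wedge^{g}(I/I^2))^*\cong S$ with no perfection or Cohen--Macaulay hypothesis (\cref{3.3,3.4}). The bridge to $\ext^g_R(S,R)$ is then the composite of natural maps $\ext^g_R(S,R)\to\tor^R_g(S,S)^*\to(\wedge^g(I/I^2))^*$ coming from the algebra structure on $\tor^R(S,S)$ and the pairing of $\tor$ against $\ext$, which is an isomorphism precisely under the equidimensionality and local freeness in depth $\le 1$ that you already established (\cref{3.5}). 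If you want to salvage your outline, the ingredient to supply is an analogue of \cref{3.2}: some device that propagates the conclusion along the finite resolution of $I/I^2$, since that finiteness is otherwise never used in your sketch.
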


The essential part in the proof is the following result.

\begin{lemma}
\label{3.2}
Let $R$ be any noetherian ring and let
\[
0\longrightarrow N\longrightarrow F\longrightarrow M\longrightarrow 0
\]
be an exact sequence, where $M$ is a module of rank $r$ and $F$ is a free module of rank $n$. If $M$ is free for all $\fp\in\spec R$ with $\depth R_\fp\le 1$, then there is a canonical isomorphism
\[
(\wedge^r M)^* \cong (\wedge^{n-r}N)^{**}\,.
\]
\end{lemma}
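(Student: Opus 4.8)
The plan is to exhibit a canonical map $\Phi\colon(\wedge^{n-r}N)^{**}\to(\wedge^{r}M)^{*}$ coming from exterior multiplication, and then to prove it is an isomorphism one prime at a time, using that both source and target automatically satisfy Serre's condition $(S_2)$ because they are $\Hom_R(-,R)$-duals.

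To build $\Phi$: the inclusion $N\hookrightarrow F$ induces $\wedge^{n-r}N\to\wedge^{n-r}F$, and composing with exterior multiplication $\wedge^{n-r}F\otimes_R\wedge^{r}F\to\wedge^{n}F$ gives a pairing $\wedge^{r}F\otimes_R\wedge^{n-r}N\to\wedge^{n}F$. This pairing annihilates the kernel of the surjection $\wedge^{r}F\twoheadrightarrow\wedge^{r}M$, which is spanned by the $x\wedge\omega$ with $x\in N$ and $\omega\in\wedge^{r-1}F$: for $\nu\in\wedge^{n-r}N$ one has $(x\wedge\omega)\wedge\nu=\pm(x\wedge\nu)\wedge\omega$, and $x\wedge\nu$ is the image in $\wedge^{n-r+1}F$ of an element of $\wedge^{n-r+1}N$; but $\wedge^{n-r+1}N$ vanishes at every associated prime $\fp$ of $R$ (there $\depth R_\fp=0\le1$, so $M_\fp$ is free, the sequence splits, and $N_\fp$ is free of rank $n-r$), so its image in the free module $\wedge^{n-r+1}F$ has no associated primes and hence is zero. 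Choosing a basis of the rank-one module $\wedge^{n}F$ to identify it with $R$, the pairing yields a canonical $\bar\phi\colon\wedge^{r}M\to(\wedge^{n-r}N)^{*}$, and I set $\Phi\coloneqq(\bar\phi)^{*}$.

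Because $R$ is noetherian and all modules are finitely generated, $\Hom_R(-,R)$ commutes with localization, so it suffices to check that $\Phi_\fp$ is an isomorphism for every prime $\fp$. If $\depth R_\fp\le1$, then $M_\fp$ is free by hypothesis, the sequence splits, $N_\fp$ is free of rank $n-r$, and choosing a basis of $N_\fp$ together with a basis of a complementary free summand of $F_\fp$ shows the pairing sends generator $\otimes$ generator of $\wedge^{r}M_\fp\otimes\wedge^{n-r}N_\fp$ to a basis of $\wedge^{n}F_\fp$; hence $\bar\phi_\fp$, and therefore $\Phi_\fp$, is an isomorphism of free rank-one modules. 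For the remaining primes I use that $(\wedge^{r}M)^{*}$ and $(\wedge^{n-r}N)^{**}$ are each of the form $L^{*}$ with $L$ finitely generated: writing $L^{*}$ as the kernel of a map of finite free modules and applying the depth lemma twice gives $\depth(L^{*})_\fp\ge\min\{2,\depth R_\fp\}$ for every $\fp$.

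It remains to propagate the isomorphism. Every associated prime of $(\wedge^{n-r}N)^{**}$ has depth $0$ by the last inequality, hence depth $\le1$, where $\Phi$ is injective; so $\ker\Phi$, a submodule of $(\wedge^{n-r}N)^{**}$, has no associated primes and vanishes. If $C\coloneqq\coker\Phi$ were nonzero, a prime $\fp$ minimal in $\operatorname{Supp}C$ would satisfy $\depth R_\fp\ge2$ (since $C_\fp=0$ when $\depth R_\fp\le1$), so the left and middle terms of $0\to(\wedge^{n-r}N)^{**}_\fp\to(\wedge^{r}M)^{*}_\fp\to C_\fp\to0$ would have depth $\ge2$, forcing $\depth C_\fp\ge1$ by the depth lemma, contradicting $\depth C_\fp=0$. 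Hence $\Phi$ is an isomorphism. The one genuinely delicate point is the construction in the second paragraph — checking that the exterior pairing descends to $\wedge^{r}M$ (this is where the rank hypothesis enters, through the vanishing of $\wedge^{n-r+1}N$ up to torsion) and that it becomes perfect at the primes of depth $\le1$; after that the $(S_2)$-property of $\Hom_R(-,R)$-duals carries the argument through the remaining primes with no further hypotheses on $R$.
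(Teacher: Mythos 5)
Your proposal is correct and follows essentially the same route as the paper: the same exterior-multiplication pairing $\wedge^r M\otimes\wedge^{n-r}N\to\wedge^nF$, dualized to a map $(\wedge^{n-r}N)^{**}\to(\wedge^rM)^*$, checked to be an isomorphism at primes of depth $\le 1$ and then propagated using the depth properties of $\Hom_R(-,R)$-duals. The paper states these verifications as "an easy matter" and "compatible with localizations"; you have simply supplied the details (in particular the well-definedness via the generic vanishing of $\wedge^{n-r+1}N$, which is where the rank hypothesis enters), and they are right.
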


\begin{proof}
We define a mapping
\[
\varphi\colon \wedge^rM\longrightarrow \Hom_R(\wedge^{n-r}N,\wedge^nF)\cong (\wedge^{n-r}N)^*\,.
\]
Given $m_1\wedge \cdots \wedge m_r\in \wedge^rM$, choose $f_i\in F$ with $f_i\mapsto m_i$, then
\[
\varphi(m_1\wedge \cdots\wedge m_r)(w_1\wedge\cdots\wedge w_{n-r})=
    f_1\wedge\cdots \wedge f_r\wedge w_1\wedge \cdots \wedge w_{n-r}
\]
for all $w_1\wedge\cdots\wedge w_{n-r}$ in $\wedge^{n-r}N$, identified with their images in $F$. It is an easy matter to see that $\varphi$ is well-defined and that $\varphi$ is an isomorphism if $M$ is free. Since $\varphi$ is compatible with localizations, our conditions imply that the dual of $\varphi$ is an isomorphism.
\end{proof}

\begin{corollary}
\label{3.3}
If $R$ is local and $M$ satisfies the conditions of \eqref{3.2} and $\pdim_RM<$ is finite, then $(\wedge^rM)^*\cong R$.
\end{corollary}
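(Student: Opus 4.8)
The plan is to deduce \cref{3.3} from \cref{3.2} by induction on $d\coloneqq\pdim_R M$, the point being that the first syzygy of $M$ again satisfies the hypotheses of \cref{3.2}. Write $0\to N\to F\to M\to 0$ for a presentation of $M$ by a free module $F$ of rank $n$, so that $\rank N=n-r$, and recall that \cref{3.2} supplies a canonical isomorphism $(\wedge^rM)^*\cong(\wedge^{n-r}N)^{**}$. Thus it is enough to show that the right-hand side is free of rank $1$; equivalently, I will prove the single-starred statement $(\wedge^rM)^*\cong R$ by induction, which is exactly the form that can be fed back through \cref{3.2}.

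In the base case $d=0$ the module $M$ is free of rank $r$, so $\wedge^rM\cong R$ and there is nothing to prove. Suppose now $d\ge1$. The first step is to check that $N$ inherits the hypotheses of \cref{3.3}: it has rank $n-r$, and it has finite projective dimension $d-1$ because $F$ is free. Moreover, if $\fp\in\spec R$ satisfies $\depth R_\fp\le1$, then $M_\fp$ is free by assumption, so the localized sequence $0\to N_\fp\to F_\fp\to M_\fp\to0$ splits; hence $N_\fp$ is a direct summand of the free module $F_\fp$ and is therefore free. Consequently the induction hypothesis applies to $N$, giving $(\wedge^{n-r}N)^*\cong R$. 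Dualizing this isomorphism gives $(\wedge^{n-r}N)^{**}\cong R^*\cong R$, and then \cref{3.2} yields $(\wedge^rM)^*\cong(\wedge^{n-r}N)^{**}\cong R$, which closes the induction.

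I do not expect a genuine obstacle: \cref{3.2} carries the real weight, and what remains is bookkeeping. The two points that deserve care are, first, the verification that the condition ``free at every prime of depth $\le1$'' descends from $M$ to $N$ — this is the localization-and-splitting argument above — and, second, keeping straight that \cref{3.2} relates $(\wedge^rM)^*$ to the \emph{bidual} of $\wedge^{n-r}N$ rather than to its dual. The latter is why the inductive statement is chosen to be $(\wedge^rM)^*\cong R$: at each stage one passes back through \cref{3.2} at the cost of a single harmless dualization, using only that $R^*\cong R$.
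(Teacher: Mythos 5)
Your proof is correct and follows exactly the route the paper intends: the paper's own proof is the single sentence ``induction on projective dimension applying the lemma to $M$ and to the syzygies of $M$,'' and your write-up supplies precisely the missing details (the descent of the hypotheses of \cref{3.2} to the syzygy via localization and splitting, and the handling of the bidual by one extra dualization of $R$).
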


The proof follows by induction on projective dimension applying the lemma to $M$ and to the syzygies of $M$. 

With our standard assumptions on $R$ and $I$ we have

\begin{corollary}
\label{3.4}
If $\height I=g$ and $\pdim_S(I/I^2)$ is finite, then
\[
(\wedge^g I/I^2)^*\cong S\,.
\]
\end{corollary}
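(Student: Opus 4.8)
The plan is to apply \cref{3.3} to the local ring $S$ in place of $R$, and to the $S$-module $M=I/I^2$, with $r=g$; note that $g=\height I=\grade I$ since $R$ is Cohen--Macaulay. To invoke \cref{3.3} three things must be checked: that $\pdim_S(I/I^2)$ is finite, which is the hypothesis; that $I/I^2$ is free over $S_\fp$ for every $\fp\in\spec S$ with $\depth S_\fp\le 1$; and that $I/I^2$ has rank $g$ over $S$. Granting these, \cref{3.3} immediately gives $(\wedge^g I/I^2)^*\cong S$.

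The depth condition is checked by localizing. If $\fp\in\spec S$ has $\depth S_\fp\le 1$, then $\pdim_{S_\fp}(I_\fp/I_\fp^2)<\infty$, and Auslander--Buchsbaum forces this projective dimension to be at most $1$. Applying \cref{1.2} to the noetherian local ring $R_\fp$ and the ideal $I_\fp$ (which still has finite projective dimension over $R_\fp$) shows $I_\fp$ is a complete intersection, so $I_\fp/I_\fp^2$ is free over $S_\fp$, as needed.

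For the rank, let $\fp$ run over the minimal primes of $S$, equivalently over the minimal primes of $I$. Then $S_\fp=R_\fp/I_\fp$ is a zero-dimensional local ring, so $\depth S_\fp=0$ and the module $(I/I^2)_\fp=I_\fp/I_\fp^2$, being of finite projective dimension over it, is free; by \cref{th:ferrand-vasconcelos}, applied over $R_\fp$, the ideal $I_\fp$ is a complete intersection, and since it is $\fp R_\fp$-primary it is minimally generated by a regular sequence of length $\dim R_\fp=\height\fp$, so $(I/I^2)_\fp$ is free of rank $\height\fp$. Now take a finite free resolution $F_\bullet$ of $I/I^2$ over $S$. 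The integer $\sum_i(-1)^i\rank_S F_i$ is unchanged under localization, and $(F_\bullet)_\fp$ is a finite free resolution of the free module $(I/I^2)_\fp$, so this integer equals $\height\fp$. Hence every minimal prime of $I$ has the same height, which must then be $\height I=g$; thus $I/I^2$ has constant rank $g$ (and $I$ is in fact unmixed), and \cref{3.3} applies.

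I expect the rank statement to be the only real obstacle. A priori the finiteness of $\pdim_S(I/I^2)$ says nothing about the heights of the various minimal primes of $I$, and without it the conormal module $I/I^2$ need not possess a well-defined rank at all---for instance when $I=(xy,xz)$ in a polynomial ring in three variables. What makes the statement of the corollary meaningful is precisely that finiteness of projective dimension ties the local ranks of $I/I^2$ together through the Euler characteristic of a finite free resolution; once that equidimensionality is in hand, the corollary is a routine application of the already-established \cref{3.2} and \cref{3.3}.
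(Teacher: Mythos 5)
Your proof is correct and follows essentially the same route as the paper: verify the hypotheses of \cref{3.3} by using Auslander--Buchsbaum together with \cref{1.2} at primes of depth at most one, and \cref{th:ferrand-vasconcelos} at the minimal primes to identify the rank. The only difference is that you make explicit, via the Euler characteristic of a finite free resolution, why the local ranks at the various minimal primes agree (forcing $I$ to be equidimensional of height $g$), a point the paper's proof leaves implicit in its appeal to \cref{th:ferrand-vasconcelos}.
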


\begin{proof}
      The assumption $\pdim_S(I/I^2)<\infty$ guarantees that $\rank_S(I/I^2)=g$ by Theorem \ref{th:ferrand-vasconcelos}. If $\fp\in\spec S$ satisfies $\depth S_\fp\le 1$, then $\pdim_{S_\fp}(I/I^2)\le 1$ and hence $(I/I^2)_\fp$ is free by Theorem \ref{1.2}. We may therefore apply \cref{3.3}.
\end{proof}

    The proof of Theorem \ref{3.1} is now a consequence of the following well-known result. The ideal $I$ is said to be equidimensional if $\height \mathfrak{p} =g$ for all associated primes $\mathfrak{p}$ of $I$; in particular, $I$ has no embedded primes.

\begin{proposition}
    \label{3.5}
If $\height I=g$, then there is a natural homomorphism
\[
\psi\colon \ext^g_R(S,R)\longrightarrow (\wedge^g I/I^2)^*\,.
\]
Moreover $\psi$ is an isomorphism if $I$ is equidimensional and $(I/I^2)_\fp$ is free for all $\fp\in\spec S$ with $\depth S_\fp\le 1$.
\end{proposition}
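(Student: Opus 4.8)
The plan is to construct $\psi$ from the standard presentation of the module of differentials and exterior-power functoriality, then to check the isomorphism claim by reducing to the case $g = 0$ via a regular sequence, where local duality applies cleanly. First I would build the map. Recall the conormal presentation
\[
I/I^2 \xrightarrow{\ \delta\ } \Omega_{R/k}\otimes_R S \longrightarrow \Omega_{S/k}\longrightarrow 0\,,
\]
but in fact for the algebraic statement here it is cleaner to work directly: let $F_\bullet \to S$ be the beginning of a free $R$-resolution, so that $\ext^g_R(S,R) = \coker\big(F_{g-1}^* \to F_g^*\big)$ in the relevant range, and $I/I^2$ fits in $R^m \xrightarrow{} I/I^2 \to 0$ coming from a generating set $x_1,\dots,x_m$ of $I$. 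Taking $\wedge^g$ of $S^m \twoheadrightarrow I/I^2$ gives a surjection $\wedge^g(S^m) \twoheadrightarrow \wedge^g(I/I^2)$, hence an injection on duals $(\wedge^g I/I^2)^* \hookrightarrow \wedge^g(S^m)^* = \Hom_S(\wedge^g S^m, S)$. On the other side, $\ext^g_R(S,R)$ is computed by the Koszul-type relations among the $x_i$, and one has a natural pairing: a class in $\ext^g_R(S,R)$ is represented by an alternating form on the syzygies, and antisymmetrizing a length-$g$ tuple of the $x_i$ against it produces an element of $(\wedge^g I/I^2)^*$. This is the map $\psi$; its naturality and independence of choices is the same kind of routine multilinear-algebra verification as in the proof of Lemma \ref{3.2}, and I would dispatch it in a sentence or two.

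Second, I would prove the isomorphism statement by dévissage on $g$. When $g = 0$, equidimensionality of $I$ means $S$ has a system of parameters that is empty (i.e. $\height I = 0$ but the hypothesis forces the generic behavior), so $\wedge^0 I/I^2 = S$ and $\psi$ reduces to the identity $\Hom_R(S,R) \to S$—one checks this is an isomorphism precisely under the stated depth-$\le 1$ freeness and equidimensionality, using that $I/I^2$ is generically free of rank $0$, i.e. $I$ is generically $0$, meaning $S$ contains all the minimal primes with full multiplicity. For the inductive step, pick $y \in I$ that is $R$-regular and $S$-regular outside the relevant locus; pass to $\overline R = R/(y)$, $\overline S = S/(y)$ (or rather to $R/(y)$ with $I/(y)$), so that $\height(I/(y)) = g-1$ in $\overline R$. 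The key compatibilities are: $\ext^g_R(S,R) \cong \ext^{g-1}_{\overline R}(S,\overline R)$ by the change-of-rings spectral sequence / the short exact sequence $0 \to R \xrightarrow{y} R \to \overline R \to 0$ applied with $y$ regular on $S$; and $\wedge^g(I/I^2) \otimes_S \overline S$ relates to $\wedge^{g-1}$ of the conormal module of $I/(y)$ in $\overline R$ via the exact sequence coming from $yR \subset I$. One checks $\psi$ is compatible with both reductions, and then concludes by the inductive hypothesis since the depth and equidimensionality conditions descend (depth drops by exactly one under cutting by a regular element).

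Alternatively—and this is probably the cleaner route to actually write—I would deduce the isomorphism from Lemma \ref{3.2} and Corollary \ref{3.4} together with Proposition \ref{3.5} regarded purely as the assertion that $\psi$ is an iso after localizing at every prime of depth $\le 1$, then invoking that both sides are reflexive (being duals) and agree generically, so an isomorphism in codimension-one-depth forces an isomorphism. Concretely: both $\ext^g_R(S,R)$ and $(\wedge^g I/I^2)^*$ are $S$-modules satisfying Serre's condition $(S_2)$ over the—not quite, but—over $S$ once $I$ is equidimensional (here one uses that $\ext^g_R(S,R)$ is the canonical-type module and the $(S_2)$-ification statement from \cite{Herzog/Kunz:1971}, e.g. \cite[6.8]{Herzog/Kunz:1971} as already cited in the proof of Theorem \ref{2.2}); a map between two $(S_2)$ modules that is an isomorphism at all primes of depth $\le 1$ is an isomorphism. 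The hypothesis "$(I/I^2)_\fp$ free for $\depth S_\fp \le 1$" is exactly what makes $\psi_\fp$ an iso at those primes, by the local form of Lemma \ref{3.2} (the case $M$ free).

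The main obstacle I anticipate is verifying that $\psi$ localizes correctly and that $\ext^g_R(S,R)$ genuinely satisfies $(S_2)$ as an $S$-module under only the equidimensionality hypothesis on $I$—without Cohen–Macaulayness one must be careful, and this is where the cited local-duality machinery of Herzog–Kunz does the real work. Constructing $\psi$ itself and checking it is an iso when $I/I^2$ is free are, by contrast, formal. I would therefore structure the written proof as: (1) define $\psi$ via exterior powers of the conormal presentation; (2) observe $\psi$ commutes with localization; (3) cite that under equidimensionality $\ext^g_R(S,R)$ is $(S_2)$ over $S$ and $(\wedge^g I/I^2)^*$ is a dual hence $(S_2)$; (4) invoke the freeness hypothesis plus Lemma \ref{3.2} to see $\psi_\fp$ is an iso whenever $\depth S_\fp \le 1$; (5) conclude.
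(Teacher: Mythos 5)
Your proposal has a genuine gap at the heart of the proposition: the map $\psi$ is never actually constructed. The paper builds $\psi$ as a composite $\alpha\circ\beta\circ\gamma$ using specific structure: the skew-symmetric algebra structure on $\tor^R(S,S)$ gives a natural map $\wedge^g(I/I^2)=\wedge^g\tor^R_1(S,S)\to\tor^R_g(S,S)$, whose dual is $\alpha$; an explicit evaluation pairing $\hh_g(F\otimes_RS)\times\hh^g(F^*)\to S$ on a free resolution $F$ of $S$ gives $\sigma\colon\tor^R_g(S,S)\to\ext^g_R(S,R)^*$, whose dual is $\beta$; and $\gamma$ is the map to the bidual. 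Your sketch (``a class in $\ext^g_R(S,R)$ is represented by an alternating form on the syzygies, and antisymmetrizing a length-$g$ tuple of the $x_i$ against it'') does not define a homomorphism: you have an injection $(\wedge^g I/I^2)^*\hookrightarrow(\wedge^gS^m)^*$ but no recipe that lands in the subobject, and no use of the multiplicative structure that makes the comparison between $\wedge^g\tor_1$ and $\tor_g$ (hence between the conormal module and $\ext^g$) possible. This is the actual content of the statement ``there is a natural homomorphism,'' and it cannot be dispatched ``in a sentence or two.''

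On the isomorphism claim, your second (``cleaner'') route is close in spirit to what is needed but still has an unfilled hole that you yourself flag: you must know that $\ext^g_R(S,R)$ is reflexive (equivalently, that $\gamma$ is an isomorphism) under mere equidimensionality of $I$, without Cohen--Macaulayness. The paper does not get this from \cite[6.8]{Herzog/Kunz:1971} (that reference is used in \cref{2.2} for duality over the Artinian-type quotient $R/(\boldsymbol{x})$); it cites \cite[Cor.~3.2]{Andrade/Simis/Vasconcelos:1981} for precisely this point. Your first route (d\'evissage on $g$) I would discard: the base case is garbled ($\height I=0$ does not make $\dim S=0$), the element $y\in I$ you cut by kills $S$ and so is not ``regular on $S$'' (Rees's change-of-rings needs $y$ regular on $R$ with $yS=0$), and the conormal module of $I/(y)$ in $R/(y)$ is $I/(I^2+(y))$, whose $(g-1)$st exterior power does not compare to $\wedge^g(I/I^2)$ in any straightforward way. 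In summary: your overall architecture (construct $\psi$, check it localizes, reduce to the locally free case where everything is an isomorphism) matches the paper's, but the two load-bearing steps --- the definition of $\psi$ via the Tor-algebra pairing, and the reflexivity of $\ext^g_R(S,R)$ --- are missing.
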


\begin{proof}
    We will exhibit $\psi$ as a composition of three mappings. Since $\tor^R(S,S)$ is a skew-symmetric algebra with $\tor^R_1(S,S)\cong I/I^2$, there is a natural mapping
    \[
    \wedge^g I/I^2\longrightarrow \tor^R_g(S,S)\,.
    \]
    Dualizing, we obtain a  natural mapping
    \[
    \alpha\colon \tor^R_g(S,S)^*\longrightarrow (\wedge^g I/I^2)^*\,.
    \]
    If $I/I^2$ is free then $I$ is a complete intersection and $\tor^R_g(S,S)\cong \wedge^g I/I^2$. Therefore $\alpha$ is an isomorphism under the extra conditions of \eqref{3.5}.
    
    Next let $F$ be a free $R$-resolution of $S$. Then there is a natural map
    \[
    \sigma\colon \tor^R_g(S,S)=\hh_g(F\otimes_RS)\longrightarrow \Hom_S(\hh^g(F^*),S)=\ext^g_R(S,R)^*\,.
    \]
If $[z]$ is in  $\hh_g(F\otimes_RS)$, choose $\widetilde{z}\in F$ with $z=\widetilde{z}\otimes 1$ and define
\[
\sigma([z])([w])=w(\widetilde{z})\otimes 1\quad \text{for $[w]\in \hh^g(F^*)$.}
\]
Set 
\[
\beta=\sigma^*\colon \ext^g_R(S,R)^{**}\longrightarrow \tor^R_g(S,S)^*\,.
\]
Finally we define $\psi=\alpha\circ\beta\circ\gamma$, where
\[
\gamma\colon \ext^g_R(S,R)\longrightarrow \ext^g_R(S,R)^{**}
\]
is the natural homomorphism to the bidual module. Again under the extra assumptions of \eqref{3.5} the maps $\beta$ and $\gamma$ are isomorphisms (see \cite[Cor.~3.2]{Andrade/Simis/Vasconcelos:1981}) and hence $\psi$ is an isomorphism. 
\end{proof}

Let us assume now that $R$ and $S$ are in Conjecture~(C2); thus $S=R/I$ where 
\[
R = k[x_1,\dots, x_n]_\fp\quad\text{with}\quad \fp \in  \spec k[x_1,\dots, x_n]
\]
and $k$ is a field of characteristic zero. Corresponding to \eqref{3.1}  we have

\begin{theorem}
\label{3.6}
If $\pdim_S \Omega_{S/k}<\infty$, then $S$ is quasi-Gorenstein.\footnote{This appears as \cite[Satz 1]{Platte:1980}, in the generality of analytic algebras generically separable over a field.}
\end{theorem}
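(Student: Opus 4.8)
The plan is to follow the proof of \cref{3.1} as closely as possible, substituting $\Omega_{S/k}$ for $I/I^2$ in exactly those steps where $I/I^2$ is not known to have finite projective dimension. Since $R$ is the local ring of a closed point of affine space, $\Omega_{R/k}$ is free, of rank $n$ say, so $\Omega_{R/k}\otimes_R S\cong S^n$; and the conormal sequence of \cref{se:introduction}, together with the identification $\ker\delta=\tau(I/I^2)$ recorded there, becomes the four-term exact sequence
\[
0\longrightarrow\tau(I/I^2)\longrightarrow I/I^2\xrightarrow{\ \delta\ }S^n\longrightarrow\Omega_{S/k}\longrightarrow 0 .
\]
Set $M\coloneqq\im\delta$, so that $0\to M\to S^n\to\Omega_{S/k}\to 0$ exhibits $M$ as a first syzygy of $\Omega_{S/k}$; hence $\pdim_S M<\infty$. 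Moreover $M$ is torsionfree (a submodule of a free module), and since $S$ is reduced---hence generically regular, so that $I$ is generically a complete intersection---one has $\rank_S(I/I^2)=\rank_S M=g\coloneqq\height I$.

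The first goal is to prove the analogue of \cref{3.4}, namely $(\wedge^g I/I^2)^*\cong S$. I claim $M_\fp$ is free for every $\fp\in\spec S$ with $\depth S_\fp\le 1$. If $\depth S_\fp=0$ this is clear, since $S_\fp$ is then a field ($S$ being reduced). If $\depth S_\fp=1$, then $\pdim_{S_\fp}M_\fp<\infty$, so the Auslander--Buchsbaum formula gives $\pdim_{S_\fp}M_\fp+\depth_{S_\fp}M_\fp=1$; were $M_\fp$ not free we would have $\depth_{S_\fp}M_\fp=0$, producing a nonzero element of $M_\fp$ annihilated by a nonzerodivisor of $S_\fp$ and contradicting the torsionfreeness of $M$. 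Thus $M$ satisfies the hypotheses of \cref{3.3} (with $r=g$), giving $(\wedge^g M)^*\cong S$. To transfer this to $I/I^2$: the surjection $I/I^2\twoheadrightarrow M$ induces a surjection $\wedge^g(I/I^2)\twoheadrightarrow\wedge^g M$ whose kernel $K$ is a homomorphic image of $\tau(I/I^2)\otimes_S\wedge^{g-1}(I/I^2)$, hence a torsion $S$-module; since a torsion module admits no nonzero homomorphism to $S$, applying $\Hom_S(-,S)$ to $0\to K\to\wedge^g(I/I^2)\to\wedge^g M\to 0$ yields $(\wedge^g I/I^2)^*\cong(\wedge^g M)^*\cong S$.

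It remains to feed this into \cref{3.5}. The hypothesis makes $S$ equidimensional and reduced, hence $I$ equidimensional; provided in addition $(I/I^2)_\fp$ is free for every $\fp$ with $\depth S_\fp\le 1$, \cref{3.5} gives $\ext^g_R(S,R)\cong(\wedge^g I/I^2)^*\cong S$, that is, $S$ is quasi-Gorenstein. The crux is therefore exactly this local freeness of $I/I^2$---which, unlike the freeness of $M$, does not come for free from the argument above. For $\depth S_\fp=0$ it is trivial; for $\depth S_\fp=1$, the sequence $0\to\tau(I/I^2)_\fp\to(I/I^2)_\fp\to M_\fp\to 0$ splits (as $M_\fp$ is free), so the condition is equivalent to $\tau(I/I^2)_\fp=0$, that is, to $(I/I^2)_\fp$ being torsionfree, equivalently (by \cref{th:ferrand-vasconcelos}) to $S_\fp$ being a complete intersection. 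I expect this to be the main obstacle, and to force one to use the hypothesis $\pdim_S\Omega_{S/k}<\infty$ a second time: localizing and invoking Auslander--Buchsbaum gives $\pdim_{S_\fp}\Omega_{S_\fp/k}\le\depth S_\fp\le 1$, so what one needs is a counterpart for the module of differentials of Gulliksen's \cref{1.2}---that $\pdim_{S_\fp}\Omega_{S_\fp/k}\le 1$ forces $S_\fp$ to be a complete intersection. (Alternatively, any direct proof that $\pdim_S(I/I^2)<\infty$---for instance, first showing $I/I^2$ is torsionfree, so that $I/I^2=M$---would let one quote \cref{3.1} outright and dispense with the detour through \cref{3.3,3.5}.)
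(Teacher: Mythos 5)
Your proposal follows the paper's proof of \cref{3.6} essentially step for step: the paper sets $U=\ker(\Omega_{R/k}\otimes_RS\to\Omega_{S/k})$ (your $M$), deduces $(\wedge^gU)^*\cong S$ from \cref{3.3}, transfers this to $(\wedge^g I/I^2)^*\cong S$ by observing that the kernel $T$ of $\wedge^g I/I^2\to\wedge^g U$ satisfies $T^*=0$ (the paper checks $T_\fp=0$ at depth-zero primes, you check $T$ is torsion---same thing), and concludes by \cref{3.5}. Your verification that $M$ satisfies the hypotheses of \cref{3.3} is exactly the intended, and omitted, argument. The one point you flag as unresolved---that \cref{3.5} requires $(I/I^2)_\fp$ free at primes with $\depth S_\fp\le 1$, not merely $M_\fp$ free, and that at a depth-one prime this amounts to $\tau(I/I^2)_\fp=0$, i.e.\ to $S_\fp$ being a complete intersection---is a genuine dependency that the paper's proof also passes over in silence with the words ``the assertion follows from \cref{3.5}.'' It is supplied by the counterpart for the module of differentials of Gulliksen's \cref{1.2} (if $\pdim_{S_\fp}\Omega_{S_\fp/k}\le 1$ then $S_\fp$ is a complete intersection), which is among the cases of (C2) asserted to hold at the end of \cref{se:introduction}; so your diagnosis of where the hypothesis $\pdim_S\Omega_{S/k}<\infty$ must be used a second time is correct, and your argument is complete once that result is quoted.
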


\begin{proof}
Let $U$ be the kernel of the canonical epimorphism
\[
\Omega_{R/k}\otimes_RS \longrightarrow \Omega_{S/k}\longrightarrow 0\,.
\]
Again with $g=\height I$ we know that $(\wedge^g U)^*\cong S$ by \eqref{3.3}. Let $T$ be the kernel of the canonical epimorphism $\wedge^g I/I^2\to \wedge^g U$, then we obtain the exact sequence
\[
0\longrightarrow (\wedge^g U)^*\longrightarrow (\wedge^g I/I^2)^*\longrightarrow T^*\,.
\]
Since $\pdim_S\Omega_{S/k}<\infty$ we conclude that $T_\fp=0$ for $\fp\in \spec S$ with $\depth S_\fp=0$. Hence 
\[
T^*=0 \quad\text{and hence}\quad (\wedge^g I/I^2)^*\cong S\,.
\]
The assertion follows from \eqref{3.5}.
\end{proof}

We want to mention at this place that the results proved so far imply that Conjecture (C1) holds for ideals in Gorenstein rings of dimension $\le 5$. 

\section{The cotangent functors \texorpdfstring{$\aq_i$}{Ti}}
\label{se:cotangent}

The study of the cotangent modules $\aq_i(S/R,S)$ leads to a complex whose $0$th homology is $\Omega_{S/R}$. This complex may be considered to be an approximation of a free $S$-resolution of $\Omega_{S/R}$. We begin by giving a short sketch of the definition of the functors $T_i$, following V.\ P.\ Palamodov \cite{Palamodov:1976}.

In this section we will always assume that $R$ contains the rational numbers. To a ring homomorphism $\varphi\colon R\to S$ and an $S$-module $M$ there is assigned a sequence of $S$-modules $\aq_i(S/R,M)$, for $i=0,1,2,\ldots$, with the following properties:

\begin{enumerate}[(i)]
    \item $\aq_0(S/R,M)=\Omega_{S/R}\otimes_SM$,
    \item To any short exact sequence of $S$-modules
    \[
    0 \longrightarrow  M_1 \longrightarrow M_2\longrightarrow M_3\longrightarrow  0
    \]
    belongs a long exact sequence of $S$-modules
    \[
    \longrightarrow \aq_{i+1}(S/R,M_3) \longrightarrow \aq_i(S/R,M_1)\longrightarrow  \aq_i(S/R,M_2)\longrightarrow \aq_i(S/R,M_3)\longrightarrow 
    \]
    \item To a sequence of ring homomorphisms $R\to S\to T$ and a $T$-module $M$ belongs a long exact sequence
    \[
     \longrightarrow \aq_{i+1}(T/S,M) \longrightarrow  \aq_i(S/R,M)\longrightarrow  \aq_i(T/R,M) \longrightarrow  \aq_i(T/S,M)\longrightarrow  
    \]
    This sequence is called the \emph{Zariski sequence}\footnote{Also known as the Jacobi-Zariski exact sequence.}.
\end{enumerate}
Of course this list of properties is by far not complete. Among other things, $\aq_i$ is a functor in all three variables.

\subsubsection*{The definition of $\aq_i$} Given $\varphi\colon R\to S$, one constructs a \emph{resolvent} of $\varphi$. This is a commutative diagram
\[
\begin{tikzcd}[row sep = 3mm]
    & X \ar[dr,"\varepsilon"] &\\ 
    R \ar[ur,"\iota"] \ar[rr,"\varphi"]&&S
\end{tikzcd}
\]
where $X$ is a free dg algebra over $R$ and $\varepsilon$ is a surjection. Moreover it is required that $\hh_i(X)=0$ for $i>0$ and that $\varepsilon$ induces an isomorphism $\hh_0(X)\to S$.

Remember that a \emph{dg algebra} $X$ is a graded skew-symmetric\footnote{That is, graded-commutative: $ab=ba^{(\deg a)(\deg b)}$ for all homogeneous $a,b\in X$.} algebra with a differential $\delta$ of degree $-1$ such that
\[
\delta(ab) = (\delta a)b+(-1)^{\deg a} a(\delta b)
\]
for $a,b\in X$ and $a$ homogeneous. Instead of explaining what a free dg algebra is, we describe how one obtains a resolvent of $\varphi$. The reader who wants to know more detail is referred to \cite{Gulliksen/Levin:1969}.

We first describe the adjunction of variables: Given a dg algebra $X$ and a cycle $z\in X_i$, the dg algebra $X'=X\langle T ~|~ \delta(T) = z\rangle$ is obtained by adjoining the variable $T$ to $X$ in order to kill the cycle $z$. If $i$ is even we put $\deg T = i+1$ and let 
\[
X'_j = X_j\oplus X_{j-i-1}T, \ \ T^2=0,\ \text{ and }\ \ \delta(T)=z.
\]
If $i$ is odd we put $\deg T = i+1$ and let 
\[
X'_j = X_j\oplus X_{j-(i+1)}T\oplus  X_{j-2(i+1)}T^2\oplus \cdots\ \ \text{ with }\ \ T^iT^j=T^{i+j}\text{ and }\ \ \delta(T^i)=izT^{i-1}.
\]
With this data $X'$ is a well-defined dg algebra containing $X$. The resolvent of $\varphi$ is obtained by adjoining variables:
\begin{enumerate}[Step 1:]
    \item Adjoin sufficiently many variables of degree $0$ and define a ring homomorphism
    \[
    \phi\colon R[\{x_\lambda\}_{\lambda\in \Lambda }]\longrightarrow S
    \]
such that $\phi$ is surjective and $\phi|_R=\varphi$.
    \item Let $I=\ker\phi$, $I=(\{a_\mu\}_{\mu\in M})$. Adjoin variables $T_{1\mu}$ of degree $1$ with $\delta T_{1\mu}=a_\mu$ for $\mu\in M$. Then
    \[
    K=R[\{x_\lambda\}]\langle T_{1\mu}\mid \delta T_{1\mu}=a_\mu\rangle
    \]
    is a Koszul complex with $\hh_0(K)=S$.    
    \item 
    Adjoin variables of degree $2$ to $K$ in order to kill cycles in $\hh_1(K)$ and so on.
\end{enumerate}
The final result is a resolvent $X$ of $\varphi$. Of course the construction of $X$ is not at all unique. We are now very close to the definition of $\aq_i$.

Given a resolvent $X$ of $\varphi$, we define the module of differentials $(\Omega_{X/R},d)$ as an $X$-module together with a derivation $d\colon X\to \Omega_{X/R}$ such that the usual universal property holds: for any graded $X$-module $M$ and any $R$-linear derivation $\alpha\colon X\to M$, there is a unique $X$-module homomorphism $f\colon \Omega_{X/R}\to M$ such that $\alpha = fd$.

There is a unique $R$-linear homomorphism 
\[
\delta\colon \Omega_{X/R}\longrightarrow \Omega_{X/R}
\]
of degree $-1$ that is compatible with $d$, that is to say, $d\delta =\delta d$. Moreover this satisfies $\delta^2=0$, making $\Omega_{X/R}$ a complex. 

The pair $(\Omega_{X/R},d)$ can be constructed as follows: If $X=R\langle\{T_\lambda\}_{\lambda\in \Lambda}\rangle$, then $\Omega_{X/R}$ is a free $X$-module with basis $\{dT_\lambda\}_{\lambda\in \Lambda}$ and
\[
d\colon X\longrightarrow \Omega_{X/R}
\]
is uniquely determined by the requirement that it is $R$-linear and that 
\[
d(ab)= ad(b) + (-1)^{(\deg b)(\deg a) }b d(a)
\]
for $a,b\in X$ and homogeneous.

\begin{theorem}
    \label{4.1}
    Let $\varphi\colon R\to S$ be a ring homomorphism, $M$ an $S$-module and $X$ a resolvent of $\varphi$. Then the homology of $\Omega_{X/R}\otimes_XM$ is independent of the choice of $X$.\footnote{In fact, the complex $\Omega_{X/R}\otimes_XM$ is well-defined up to homotopy; see \citerecent{Quillen:1968c}.}
\end{theorem}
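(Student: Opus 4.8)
The plan is to show that any two resolvents $X$ and $Y$ of $\varphi$ are connected by a map that becomes a homotopy equivalence after applying $\Omega_{(-)/R}\otimes_{(-)}M$, and then invoke the standard fact that homotopy equivalent complexes have isomorphic homology. The key structural input is that resolvents behave like cofibrant (semifree) replacements: they are ``free'' in the sense that new variables are adjoined successively, and they are acyclic extensions of $R$ augmented to $S$. So the first step is to prove a lifting/comparison statement: given two resolvents $\varepsilon_X\colon X\to S$ and $\varepsilon_Y\colon Y\to S$, there is a morphism of dg $R$-algebras $f\colon X\to Y$ with $\varepsilon_Y f=\varepsilon_X$. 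This is built by the same obstruction-theoretic induction used to construct a resolvent in the first place: one defines $f$ on the degree-$0$ polynomial variables by lifting their images in $S$ through the surjection $\varepsilon_Y$ (possible since $\varepsilon_Y$ is surjective), then extends over the degree-$1$ variables $T_{1\mu}$ by choosing preimages in $Y_1$ of $f(a_\mu)$ — these exist because $a_\mu\mapsto 0$ in $S=\hh_0(Y)$, so $f(a_\mu)$ is a boundary — and continues degree by degree, at each stage using $\hh_i(Y)=0$ for $i>0$ to lift the relevant cycles. This is the place where acyclicity of the target and freeness of the source are both essential.

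**Next I would** upgrade this to a homotopy-uniqueness statement: any two such comparison maps $f,f'\colon X\to Y$ over $S$ are homotopic through an $R$-linear dg algebra homotopy (a degree-$1$ $R$-linear map $s$ with $f-f'=\delta s+s\delta$, suitably compatible with the algebra structure in the derivation sense). Again this is constructed by the analogous inductive adjunction-of-variables argument. In particular, composing comparison maps $X\to Y\to X$ gives a self-map of $X$ over $S$ homotopic to the identity, and likewise for $Y$; so $X$ and $Y$ are homotopy equivalent as dg $R$-algebras augmented to $S$. At this point I would record that $\Omega_{(-)/R}$ sends a dg $R$-algebra homotopy to a chain homotopy: a dg algebra map $f\colon X\to Y$ induces $\Omega_{X/R}\otimes_X Y\to\Omega_{Y/R}$, hence after $\otimes_Y M$ a map of complexes $\Omega_{X/R}\otimes_X M\to\Omega_{Y/R}\otimes_Y M$ (using the $X$-module structure on $M$ pulled back along $f$, which agrees with the given one since both cover $S$), and a homotopy $s$ between $f$ and $f'$ induces, via the derivation property, a chain homotopy between the two induced maps on $\Omega_{(-)/R}\otimes_{(-)}M$.

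**Assembling these**, the homotopy equivalence $X\simeq Y$ over $S$ yields mutually inverse-up-to-homotopy maps $\Omega_{X/R}\otimes_X M\rightleftarrows\Omega_{Y/R}\otimes_Y M$, hence an isomorphism on homology, which is exactly the assertion.

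**The main obstacle** is the bookkeeping in the comparison and homotopy constructions, because $X$ is not merely a chain complex but a free dg $R$-algebra with possibly infinitely many variables adjoined in each degree, and the lifts must be performed compatibly with the (skew-)commutative multiplication and the twisted Leibniz rule for $d$. Concretely, when adjoining an odd-degree variable in the divided-power/polynomial style, one must ensure the homotopy $s$ respects the relations $T^iT^j=T^{i+j}$ and $\delta(T^i)=izT^{i-1}$; the clean way to handle this is to set everything up variable-by-variable using the universal property of adjunction $X\langle T\mid\delta T=z\rangle$, so that defining $f$ or $s$ on $T$ amounts to a single choice of element in the target satisfying one cycle/boundary condition, and no multiplicativity needs to be checked by hand. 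I would therefore organize the proof around two lemmas — (1) existence of a comparison morphism over $S$, and (2) any two such are dg-homotopic — each proved by transfinite induction over the variables of $X$, and then deduce the theorem formally. A remark would point out that this is precisely the statement, due to Quillen, that $\Omega_{X/R}\otimes_X M$ computes the cotangent complex and is well-defined up to homotopy, matching the footnote.
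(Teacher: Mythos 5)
The paper does not actually prove Theorem~\ref{4.1}: it states that ``a correct proof is given by Bingener'' and, crucially, that ``for the proof of \eqref{4.1} it is essential that $\mathbb{Q}\subseteq R$.'' Your outline is the standard Quillen-style argument that the footnote alludes to (comparison morphism from a free resolvent into an acyclic one, homotopy uniqueness of such comparisons, and the fact that $\Omega_{-/R}\otimes_{-}M$ turns dg algebra homotopies into chain homotopies), and as a strategy it is the right one in the setting of this section.

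The genuine gap is that your argument, as written, never uses the standing hypothesis $\mathbb{Q}\subseteq R$, and a proof that nowhere divides by an integer would establish the statement in arbitrary characteristic, where it is false (dg algebra resolvents do not compute Andr\'e--Quillen homology in characteristic $p$; this is exactly why the text flags the hypothesis as essential and defers to Bingener for a \emph{correct} proof). The place where rationality must enter is your second lemma, the homotopy uniqueness. Existence of a comparison map $f\colon X\to Y$ over $S$ is indeed a characteristic-free lifting argument of the kind you describe. But a homotopy between two dg \emph{algebra} maps is not just an $R$-linear $s$ with $f-f'=\delta s+s\delta$ satisfying a derivation identity; the workable notion is a dg algebra map $X\to Y[t,dt]$ restricting to $f$ and $f'$ at $t=0,1$, and extending such a homotopy over the even-degree polynomial variables $T$ of the resolvent (those with $T^iT^j=T^{i+j}$ and $\delta(T^n)=nzT^{n-1}$) forces one to integrate polynomials in $t$, i.e., to divide by integers. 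Equivalently, $Y\to Y[t,dt]$ is a quasi-isomorphism only when $\mathbb{Q}\subseteq R$ (over $\mathbb{F}_p$ the class of $t^{p-1}dt$ obstructs this), so without that hypothesis homotopic maps need not even agree on homology. You gesture at this bookkeeping in your ``main obstacle'' paragraph, but the issue is not organizational: the naive $(f,f')$-derivation homotopy does not automatically extend multiplicatively over polynomial generators, and making it do so is precisely where $\mathbb{Q}\subseteq R$ is consumed. To complete the proof you should formulate the homotopy via the path object $Y[t,dt]$ (or an equivalent divided-power-free device), verify there that $\mathbb{Q}\subseteq R$ makes $Y\to Y[t,dt]$ an acyclic surjection so the lifting argument applies, and only then extract the chain homotopy on $\Omega_{X/R}\otimes_XM$.
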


Here $M$ is viewed as a (differential graded) module over $X$ via the augmentation $\varepsilon \colon X\to S$. In particular, one has an isomorphism of $S$-complexes
\[
\Omega_{X/R}\otimes_X M \cong (\Omega_{X/R}\otimes_XS)\otimes_SM\,.
\]

For the proof of \eqref{4.1} it is essential that $\mathbb Q \subseteq R$. A correct proof of \eqref{4.1} is given by Bingener~\cite{Bingener:1987}. We define
\[
\aq_i(S/R,M) \coloneq \hh_i(\Omega_{X/R}\otimes_XM)\,.
\]

We now restrict our attention mainly to the following situation: $(R,\fm,k)$ is a noetherian local ring, $I\subset R$ a proper ideal, $S=R/I$ and $\varphi\colon R\to S$ the canonical epimorphism.

In that case the resolvent $X$ of $\varphi$ can be chosen such that $X_0=R$ and $X_i$ is a finitely generated free $R$-module for all $i\ge 0$. Note that $X$ is a free $R$-resolution of $S$. It is however almost never minimal in the sense that $\delta X\subseteq \fm X$.

Denote by $F_iX$ the subalgebra of $X$ generated by all elements of degree $\le i$. For each $i$ we have
\[
F_iX=F_{i-1}X\langle T_{i1},\dots,T_{ie_i}\rangle 
\]
where the $T_{ij}$ kill the homology $\hh_{i-1}(F_{i-1}X)$. We call $X$ a \emph{minimal resolvent} if 
\[
e_1=\mu_R(I) \quad\text{and}\quad e_i = \mu_S(\hh_{i-1}(F_{i-1}X)) \quad\text{for all $i\ge 2$.}
\]
We will see later that for a minimal resolvent the numbers $e_i$ coincide with the deviations of $S$, when $R$ is regular.

In the following the resolvent $X$ of $\varphi\colon R\to S$ need not be minimal. 

Let $\varepsilon\colon X\to S$ be the augmentation homomorphism and let $J = \ker(\varepsilon)$ be the augmentation ideal; it is an exact subcomplex of $X$, that is to say, $\hh(J)=0$. Furthermore we have that $J_0=I$ and $J_i=X_i$ for $i>0$.

Since $X$ is an algebra and $J$ is an ideal we can form the square of $J$. We thus obtain a subcomplex $J^2\subseteq J$ such that $J^2_0=I^2$ and $J^2_i=(F_{i-1}X)_i+ I(F_iX)_i$ for $i>0$. Since the quotient  $J/J^2$ is a complex of $S$-modules it follows that $\hh_i(J/J^2)$ is a finitely generated $S$-module for all $i$. We have
\begin{align*}
    (J/J^2)_0=I/I^2 \ \text{ and }\  (J/J^2)_i & = (F_i X/F_{i-1}X)_i\otimes_RS\\
    & = (X/F_{i-1}X)_i\otimes_RS
\end{align*}
for $i>0$. In particular $(J/J^2)_i$ are free $S$-modules for $i>0$.

\begin{lemma}
    There is an exact sequence of complexes of $S$-modules
    \[
    0 \longrightarrow I/I^2 \longrightarrow J/J^2 \longrightarrow \Omega_{X/R}\otimes_XS\longrightarrow 0.
    \]
    Here we consider $I/I^2$ to be a complex concentrated in degree zero.
\end{lemma}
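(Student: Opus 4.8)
The plan is to construct the sequence degreewise and then check exactness of the resulting maps of complexes. First I would recall the standard exact sequence attached to a surjection of dg algebras. Since $\varepsilon\colon X\to S$ is a surjection of $R$-algebras with kernel $J$, and $S$ inherits the trivial differential, the module of differentials fits into the usual conormal-type sequence
\[
J/J^2 \xrightarrow{\ \bar d\ } \Omega_{X/R}\otimes_X S \longrightarrow \Omega_{S/R} \longrightarrow 0,
\]
where $\bar d(j + J^2) = d(j)\otimes 1$. The first task is therefore to identify $\Omega_{S/R}$ and the cokernel on the right: here $\Omega_{S/R} = \Omega_{(R/I)/R} = 0$ since $R\to S$ is surjective, so in fact $\bar d$ is already surjective. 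That disposes of surjectivity on the right end of the claimed sequence, once we know $\bar d$ itself is what appears there.

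Next I would analyze the kernel of $\bar d$. In degree zero, $(J/J^2)_0 = I/I^2$ and $(\Omega_{X/R}\otimes_X S)_0 = \Omega_{R/R}\otimes_R S = 0$ (since $X_0 = R$ is a polynomial-free algebra over $R$ in no degree-zero variables—more precisely $X_0=R$, so $\Omega_{X_0/R}=0$), so the map $I/I^2 \to (\Omega_{X/R}\otimes_X S)_0$ is zero and $I/I^2$ sits inside $\ker(\bar d)$ as the degree-zero part. In positive degrees I claim $\bar d$ is injective, i.e. $(J/J^2)_i \to (\Omega_{X/R}\otimes_X S)_i$ is injective for $i>0$. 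This is the crux. The point is that in positive degrees $J/J^2$ only involves the generators $T_{i1},\dots,T_{ie_i}$ linearly: from the description $(J/J^2)_i = (X/F_{i-1}X)_i\otimes_R S$, an element of $(J/J^2)_i$ is (a residue class of) an $R$-linear combination of the new top-degree variables $T_{ij}$ in degree $i$, together with degree-$i$ products built from strictly lower variables—but those products lie in $F_{i-1}X$ and hence die. On the differentials side, $\Omega_{X/R}\otimes_X S$ is free on the symbols $\{dT_\lambda\}$, and $d(T_{ij}) = dT_{ij}$ maps the class of $T_{ij}$ to the basis element $dT_{ij}\otimes 1$. So in each positive degree the map sends a basis of the free $S$-module $(J/J^2)_i$ to part of a basis of $\Omega_{X/R}\otimes_X S$, hence is a split injection. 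Assembling, $\ker(\bar d)$ is exactly $I/I^2$ concentrated in degree zero.

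So the sequence to write down is
\[
0 \longrightarrow I/I^2 \longrightarrow J/J^2 \xrightarrow{\ \bar d\ } \Omega_{X/R}\otimes_X S \longrightarrow 0,
\]
and it remains to confirm that every map is a chain map, which is routine: $\bar d$ commutes with differentials because $d\delta = \delta d$ on $\Omega_{X/R}$ (stated in the excerpt) and the differential on $S$ is zero, while $I/I^2 \hookrightarrow J/J^2$ is the inclusion of the degree-zero subcomplex and is trivially compatible with the (zero in degree zero) differential. The main obstacle I anticipate is the bookkeeping in the positive-degree injectivity step: one must be careful that the identification $(J/J^2)_i = (X/F_{i-1}X)_i\otimes_R S$ really does reduce an arbitrary degree-$i$ element of $J$ modulo $J^2$ to a linear expression in the $T_{ij}$, so that $\bar d$ visibly hits independent basis vectors $dT_{ij}$. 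Once that normal form is in hand the rest is formal. I would also double-check the edge behavior: in degree $1$, $(J/J^2)_1 = I/I^2 \oplus$ (the free part spanned by the $T_{1j}$)—actually $(J/J^2)_1 = (X/F_0 X)_1 \otimes_R S$ is free on the $T_{1j}$, with $\mu_R(I)$ of them—and $\bar d$ sends $T_{1j}\mapsto dT_{1j}$, injectively, matching the pattern; and the differential $\delta\colon (J/J^2)_1\to (J/J^2)_0 = I/I^2$ is precisely the map $T_{1j}\mapsto a_j + I^2$, consistent with the claim that the whole of $I/I^2$ lies in the kernel image.
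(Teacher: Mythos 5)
Your argument is correct and follows essentially the same route as the paper: both identify $(J/J^2)_i$ and $(\Omega_{X/R}\otimes_XS)_i$ for $i>0$ as free $S$-modules on the degree-$i$ variables $T_{ij}$ and their differentials $dT_{ij}$, so that the induced map is an isomorphism in positive degrees and has kernel $I/I^2$ in degree zero. The only cosmetic difference is that you deduce surjectivity from the conormal sequence together with $\Omega_{S/R}=0$, where the paper simply observes it directly from $d(T_\lambda)=dT_\lambda$.
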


\begin{proof}
    The morphism of complexes
    \[
    J \longrightarrow\Omega_{X/R}\otimes_XS,\quad x\mapsto dx\otimes 1
    \]
    induces the morphism of complexes
    \[
    \delta\colon J/J^2 \longrightarrow\Omega_{X/R}\otimes_XS.
    \]
    It is obvious that $\delta$ is surjective and that $\ker(\delta_0)=I/I^2$. Thus it remains to prove that
$\delta_i$ is injective for $i>0$. Suppose that $F_iX=F_{i-1}X\langle T_1,\ldots,T_e\rangle$. Then
\[
(J/J^2)_i = \bigoplus_{j=1}^{e} ST_j\quad \text{and}\quad(\Omega_{X/R}\otimes_XS)_i=\bigoplus_{j=1}^{e} SdT_j.
\]Moreover $\delta_i$ maps the basis element $T_j$ to $dT_j$ and is $S$-linear, hence the conclusion.
\end{proof}

The preceding lemma allows us to compute the $\aq_i$ in terms of the complex $J/J^2$.

\begin{corollary}\label{4.3}
In the context above one has
\[
\aq_0(S/R,S)=0, \quad \aq_1(S/R,S)=I/I^2, \ \text{ and } \aq_i(S/R,S)=\hh_i(J/J^2)
\]
for $i>0$. Let $L=\coker(I/I^2\to J/J^2)$. The complex $(L,\delta)$ is isomorphic to $\Omega_{X/R}\otimes_XS$. If we choose a minimal resolvent $X$, then $\delta(L)\subseteq \fm L$\footnote{This can be verified using Nakayama's lemma. In fact, $X$ is a minimal resolvent if and only if $\delta(L)\subseteq \fm L$.}. Hence if $\aq_i(S/R,S)=0$ for $i\geq 2$, then $L$\footnote{When shifted down by one homological degree.} is a minimal free $S$-resolution of $I/I^2$. \qed
\end{corollary}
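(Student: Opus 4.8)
The plan is to read off all the values of $\aq$ from the short exact sequence of complexes
\[
0\longrightarrow I/I^2 \longrightarrow J/J^2\longrightarrow \Omega_{X/R}\otimes_X S\longrightarrow 0
\]
supplied by the preceding lemma (with $I/I^2$ concentrated in degree $0$), by passing to its long exact homology sequence and combining this with the known values of $\aq_0$ and $\aq_1$. Since the left-hand term has homology only in degree $0$, the connecting maps force isomorphisms $\hh_i(J/J^2)\xrightarrow{\ \sim\ }\hh_i(\Omega_{X/R}\otimes_X S)=\aq_i(S/R,S)$ for every $i\ge 2$; this is the displayed formula (which, as $\aq_1\cong I/I^2$ is recorded separately, is to be read for $i\ge 2$). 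For $i=0$ we have $\aq_0(S/R,S)=\Omega_{S/R}\otimes_S S=\Omega_{S/R}=0$ because $\varphi\colon R\to S$ is surjective; alternatively, since $X_0=R$ the module $\Omega_{X/R}$, and hence $\Omega_{X/R}\otimes_X S$, vanishes in degree $0$, which gives $\aq_0=0$ directly.

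For $i=1$ I would cite the standard identification of the first cotangent module of a surjection with its conormal module, $\aq_1(S/R,S)\cong I/I^2$. If one prefers a self-contained argument, this instead falls out of the long exact sequence once $\hh_0(J/J^2)=0$ and $\hh_1(J/J^2)=0$ are checked: the first holds because $\delta_1$ sends the free basis $\{\overline{T}_{1j}\}$ of $(J/J^2)_1$ to the residues of the $\delta T_{1j}$, which generate $I/I^2$; for the second one uses that $F_1X$ is the Koszul complex on $\delta T_{11},\dots,\delta T_{1e_1}$, so its $1$-boundaries already lie in $IX_1$ and hence vanish in $(J/J^2)_1=X_1/IX_1$, while the cycles $\delta T_{2j}$ killed at the next stage of the resolvent generate its first homology; tracing both through $X_1/IX_1$ then identifies $\ker\delta_1$ with $\im\delta_2$. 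This is the one step in the proof that requires a genuine computation, and it is where the work lies if Andr\'e--Quillen theory is not to be invoked.

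Finally I would deal with the statements about $L$. That $L=\coker(I/I^2\to J/J^2)$ is isomorphic as a complex to $\Omega_{X/R}\otimes_X S$ is exactly the lemma; moreover each $(J/J^2)_i$ with $i\ge 1$ equals $(X/F_{i-1}X)_i\otimes_R S$, a finitely generated free $S$-module, and $L_0=0$, so $L$ is a complex of free $S$-modules concentrated in degrees $\ge 1$. For a minimal resolvent, unwinding the minimality conditions defining $X$ with Nakayama's lemma shows the differential of $X$, hence of $L$, has all matrix entries in $\fm$; that is, $\delta(L)\subseteq\fm L$. Now assume $\aq_i(S/R,S)=0$ for all $i\ge 2$: then the computations above give $\hh_i(L)=0$ for $i\neq 1$ and $\hh_1(L)=\aq_1(S/R,S)=I/I^2$, so shifting $L$ down by one homological degree produces a complex of finitely generated free $S$-modules in non-negative degrees, exact in positive degrees, with $\hh_0=I/I^2$---that is, a free $S$-resolution of $I/I^2$, minimal by the previous sentence. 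Beyond this bookkeeping, the only points needing care are the minimality assertion (the Nakayama argument characterising minimal resolvents) and, on the self-contained route, the vanishing $\hh_1(J/J^2)=0$.
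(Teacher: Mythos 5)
Your argument is essentially the intended one: the paper states this corollary without proof, as an immediate consequence of the preceding lemma, and your route---pass to the long exact homology sequence of $0\to I/I^2\to J/J^2\to \Omega_{X/R}\otimes_XS\to 0$, note that the first term is concentrated in degree $0$, and check $\hh_0(J/J^2)=0=\hh_1(J/J^2)$ to recover $\aq_1(S/R,S)\cong I/I^2$---is exactly the bookkeeping being elided. You are also right that the displayed identification must be read for $i\ge 2$: since $\hh_1(J/J^2)=0$ while $\aq_1(S/R,S)=I/I^2$, the range ``$i>0$'' in the statement is an imprecision, and your computation of $\hh_1(J/J^2)$ (Koszul $2$-boundaries lie in $IX_1$, and the adjoined cycles $\delta T_{2j}$ generate $\hh_1(F_1X)$) is correct and is the one genuinely non-formal step.

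The one step that fails as written is your justification of $\delta(L)\subseteq\fm L$ for a minimal resolvent: you assert that ``the differential of $X$, hence of $L$, has all matrix entries in $\fm$.'' The first half is false---the text explicitly notes that $X$ is almost never minimal as an $R$-resolution of $S$; for instance $\delta T_{ij}$ will in general involve decomposable terms such as $T_{1k}T_{1l}$ with unit coefficients---so minimality of $L$ cannot be inherited from $X$. The correct argument works directly on $L$: in $L_{i-1}=(X/F_{i-2}X)_{i-1}\otimes_RS$ both the decomposables and the part of $\delta T_{ij}=z_{ij}$ lying in $F_{i-2}X$ die, so only the coefficients $c_k$ of the degree-$(i-1)$ variables $T_{i-1,k}$ in $z_{ij}$ survive. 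Applying $\delta$ to $z_{ij}$ shows that $\sum_k c_k z_{i-1,k}$ is a boundary in $F_{i-2}X$, i.e.\ $\sum_k c_k[z_{i-1,k}]=0$ in $\hh_{i-2}(F_{i-2}X)$; if some $c_k$ were a unit, the classes $[z_{i-1,k}]$ would not minimally generate that homology, contradicting $e_{i-1}=\mu_S(\hh_{i-2}(F_{i-2}X))$. By Nakayama's lemma this is precisely what minimality of the resolvent provides. With that repair, the final assertion (that the shift of $L$ is a minimal free $S$-resolution of $I/I^2$ when $\aq_i(S/R,S)=0$ for $i\ge 2$) goes through as you describe.
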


In any case $L$ is a natural complex which resolves $I/I^2$ approximately. It is therefore of interest to compute its homology, which is $\aq(S/R,S)$.

In the sequel the following remarks will be helpful:
\begin{enumerate}[(a)]
    \item\label{item_resolvent_a}
$(F_iX/F_{i-1}X)_j=0$ for $j<i$,
    \item\label{item_resolvent_b} $\hh_j(F_iX)=0$ for $0<j<i$,
    \item\label{item_resolvent_c} If $F_iX=F_{i-1}X\langle T_1,\ldots,T_e\rangle$ then, assuming $i>1$, one has
    \[
    \hh_i(F_iX/F_{i-1}X)\cong (F_iX/F_{i-1}X)_i\otimes_R S\cong (J/J^2)_i\cong \bigoplus_{j=1}^{e} ST_j\,,
    \]
    \item\label{item_resolvent_d} $\hh_j(F_iX/F_{i-1}X)=0$ for $i<j<2i-1$,
    \item\label{item_resolvent_e} $\hh_{2i-1}(F_iX/F_{i-1}X)= \big(\bigoplus_{j=1}^{e} \hh_{i-1}(F_{i-1}X)T_j\big)/U$ for $i>0$,
where $U$ is generated by the following elements
\[
[x_j]T_k+(-1)^i[x_k]T_j,
\]
with $j,k=1,\ldots ,e$ and $\delta(T_j)=x_j$. 
\end{enumerate}

Now for all $i>0$ let
\[
\eta_i\colon \hh_i(F_iX)\longrightarrow (F_iX/F_{i-1}X)_i\otimes_RS
\]
be the natural homomorphism assigning to the class of a cycle $z = x+\sum a_jT_j$, $x\in F_{i-1}X$, the element $\sum\overline{a}_jT_j$, where $\overline{a}_j$ denotes the residue class of $a_j$ modulo $I$.

\begin{lemma}\label{lem_4.4}
    $\aq_{i}(S/R,S)\cong \ker(\eta_{i-1})$ for $i\geqslant 2$.
\end{lemma}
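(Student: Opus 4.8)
The statement to prove is $\aq_i(S/R,S)\cong \ker(\eta_{i-1})$ for $i\geqslant 2$, where $\eta_{i-1}\colon \hh_{i-1}(F_{i-1}X)\to (F_{i-1}X/F_{i-2}X)_{i-1}\otimes_R S$ is the projection sending the class of a cycle $z=x+\sum a_j T_j$ (with $x\in F_{i-2}X$) to $\sum \overline{a}_j T_j$. By \cref{4.3} we know $\aq_i(S/R,S)=\hh_i(J/J^2)$ for $i>0$, so the task is to compute $\hh_i(J/J^2)$ for $i\geqslant 2$. The idea is to exploit the filtration of $X$ by the subalgebras $F_iX$, which induces a corresponding filtration on $J/J^2$, and then to read off the homology of $J/J^2$ in low degrees from the associated graded pieces, which are exactly the complexes $(F_iX/F_{i-1}X)$ tensored down to $S$ — whose homology is described in remarks \eqref{item_resolvent_a}--\eqref{item_resolvent_e}.

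First I would pin down the complex $J/J^2$ concretely. Recall $(J/J^2)_0=I/I^2$ and $(J/J^2)_j=(X/F_{j-1}X)_j\otimes_R S$ is free over $S$ for $j>0$, with basis the degree-$j$ exterior/divided-power variables adjoined at stage $j$. The point is that in degree $i$ the module $(J/J^2)_i$ depends only on the variables of degree exactly $i$, so the differential $\delta_i\colon (J/J^2)_i\to(J/J^2)_{i-1}$ lands in the part of $(J/J^2)_{i-1}$ coming from degree-$(i-1)$ variables, and on a basis element $T_j^{(i)}$ (of degree $i$) it records $\delta(T_j^{(i)})=x_j\in F_{i-1}X$ reduced modulo $J^2$, i.e.\ it keeps track of the ``linear in degree-$(i-1)$-variables'' part of $x_j$, modulo $I$. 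This is precisely the data of the map $\eta$: a class in $\hh_{i-1}(F_{i-1}X)$ is represented by a cycle $x_j=y+\sum a_k T_k^{(i-1)}$ with $y\in F_{i-2}X$, and $\eta_{i-1}$ extracts $\sum\overline{a}_k T_k^{(i-1)}$.

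The heart of the argument is then the identification of $\hh_i(J/J^2)$ with $\ker\eta_{i-1}$. I would argue as follows. An $i$-cycle in $J/J^2$ (for $i\geqslant 2$) is a combination $\xi=\sum c_j T_j^{(i)}$ of degree-$i$ variables with $c_j\in S$ such that $\delta\xi=0$ in $(J/J^2)_{i-1}$. Now $\delta\xi=\sum c_j\,\overline{\delta(T_j^{(i)})}$, where $\overline{(\,\cdot\,)}$ denotes the image in $(J/J^2)_{i-1}$; since each $\delta(T_j^{(i)})\in F_{i-1}X$ is a cycle representing a class in $\hh_{i-1}(F_{i-1}X)$, this shows that $\xi\mapsto \sum c_j\,[\delta(T_j^{(i)})]$ defines a map from $i$-cycles of $J/J^2$ into $\hh_{i-1}(F_{i-1}X)$, and the cycle condition $\delta\xi=0$ translates — using remark \eqref{item_resolvent_c}, which says the degree-$(i-1)$ part of $(J/J^2)$ detects classes faithfully through $\eta_{i-1}$, together with \eqref{item_resolvent_a} to control lower-degree contributions — into the condition that $\sum c_j[\delta(T_j^{(i)})]$ lies in $\ker\eta_{i-1}$. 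Conversely any element of $\ker\eta_{i-1}$ is hit: the degree-$i$ variables $T_j^{(i)}$ were adjoined precisely to kill a generating set of $\hh_{i-1}(F_{i-1}X)$, so every class of $\hh_{i-1}(F_{i-1}X)$ is $\sum c_j[\delta(T_j^{(i)})]$ for suitable $c_j\in S$, and the vanishing under $\eta_{i-1}$ is exactly what makes $\sum c_j T_j^{(i)}$ a cycle. Finally I must check this correspondence kills boundaries: a boundary in degree $i$ comes from degree-$(i+1)$ variables $T^{(i+1)}$, and $\delta$ of such maps onto the submodule of $\hh_{i-1}(F_{i-1}X)$ that is killed in $\hh_i(F_iX)$ — so after passing to $\hh_i$ we land in $\hh_i(F_iX)$ and the boundaries are exactly the kernel of $\hh_i(F_iX)\twoheadleftarrow$, giving an induced iso onto $\ker\eta_{i-1}\subseteq \hh_{i-1}(F_{i-1}X)$. (One uses here that $\hh_i(F_iX)=\hh_i(X)=0$ for $i\geqslant 1$ by acyclicity of the resolvent, so in fact every class of $\hh_{i-1}(F_{i-1}X)$ becomes a boundary in $X$; the filtration degree is what separates out $\ker\eta_{i-1}$.)

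The main obstacle I anticipate is the bookkeeping for ``lower-degree contributions'': when I write a cycle $\delta(T_j^{(i)})=y+\sum a_k T_k^{(i-1)}$ with $y\in F_{i-2}X$, I need to be sure that the cycle condition for $\xi$ really does reduce to a statement purely about the $T^{(i-1)}$-coefficients mod $I$, with no interference from $y$ or from products of lower-degree variables inside $J^2$ — this is where remarks \eqref{item_resolvent_a} and the precise description of $J^2$ (namely $J^2_i=(F_{i-1}X)_i+I(F_iX)_i$) do the work, but it requires care to state cleanly. A secondary subtlety is making the boundary computation rigorous: identifying the image of $\delta$ from degree $i+1$ with the submodule $U'$ of $\hh_{i-1}(F_{i-1}X)$ killed by adjoining the $T^{(i)}$'s, so that the quotient of ($i$-cycles of $J/J^2$) by (boundaries) is canonically $\ker\eta_{i-1}$ and not merely surjects onto it. Once these are handled, the isomorphism is natural and the proof is complete; I would present it as a short diagram chase organized around the filtration, citing \eqref{item_resolvent_a}--\eqref{item_resolvent_c} and acyclicity of $X$.
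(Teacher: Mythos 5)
Your argument is in substance the paper's proof: identify $\aq_i(S/R,S)$ with $\hh_i(J/J^2)$ via \cref{4.3}, observe that the differential of $J/J^2$ in degree $i$ factors as $\delta_i=\eta_{i-1}\circ\sigma_i$, where $\sigma_i\colon (J/J^2)_i\to\hh_{i-1}(F_{i-1}X)$ sends $\sum c_jT_j^{(i)}$ to $\sum c_j[\delta T_j^{(i)}]$, and then match cycles with $\sigma_i^{-1}(\ker\eta_{i-1})$ and boundaries with $\ker\sigma_i=\im\eta_i$. The paper packages exactly this through the long exact homology sequence of $0\to F_{i-1}X\to F_iX\to F_iX/F_{i-1}X\to 0$, which delivers in one stroke the surjectivity of $\sigma_i$ (from $\hh_{i-1}(F_iX)=0$, remark \ref{item_resolvent_b}) and the identity $\ker\sigma_i=\im\eta_i$; your hands-on version needs precisely these two facts, and you do supply the first (via the construction of the resolvent) and gesture at the second (via $\im\delta_{i+1}=\im(\eta_i\sigma_{i+1})=\im\eta_i$, using that $\sigma_{i+1}$ is onto).

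One assertion must be corrected: the parenthetical claim that $\hh_i(F_iX)=\hh_i(X)=0$ for $i\geq 1$ ``by acyclicity of the resolvent'' is false. The subalgebra $F_iX$ is only a partial resolvent; remark \ref{item_resolvent_b} gives $\hh_j(F_iX)=0$ only for $0<j<i$, and $\hh_i(F_iX)$ is in general nonzero --- it is exactly what the degree-$(i+1)$ variables are adjoined to kill, and its image under $\eta_i$ is exactly the module of boundaries in $(J/J^2)_i$. (Already $\hh_1(F_1X)$ is the first Koszul homology on a generating set of $I$, which vanishes only when $I$ is a complete intersection; if all $\hh_i(F_iX)$ vanished, the lemma would force $\aq_i(S/R,S)=0$ for all $i\geq 2$, contradicting, e.g., \cref{4.11}.) What your argument actually needs at that point is $\hh_{i-1}(F_iX)=0$ for $i\geq 2$, the correct consequence of \ref{item_resolvent_b}, which is what makes $\sigma_i$ surjective. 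With that repair, and a cleaner statement of the boundary step (namely that boundaries equal $\im\eta_i$, which equals $\ker\sigma_i$ and is automatically contained in the cycles), your proof is complete and coincides with the paper's.
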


\begin{proof}
    Using \ref{item_resolvent_b}, \ref{item_resolvent_a}, and \ref{item_resolvent_c} we obtain for $i>1$ from the exact sequence
    \[
    0\longrightarrow F_{i-1}X\to F_iX \longrightarrow F_iX/F_{i-1}X\longrightarrow0
    \]
    the exact sequence
    \begin{equation}\label{eq_eta}
         \hh_i(F_iX)\xrightarrow{\ \eta_i\ } (F_iX/F_{i-1}X)_i\otimes_RS \xrightarrow{\ \sigma_i\ }\hh_{i-1}(F_{i-1}X) \longrightarrow 0.
    \end{equation}
    A simple computation shows that for $i>0$ the following diagram commutes
    \[
    \begin{tikzcd}
        (F_iX/F_{i-1}X)_i\otimes_RS \ar[d,"\cong"'] \ar[r,"\sigma_{i}"]&  \hh_{i-1}(F_{i-1}X) \ar[r,"\eta_{i-1}"]&  (F_{i-1}X/F_{i-2}X)_{i-1}\otimes_RS\ar[d,"\cong"]\\
        L_i \ar[rr,"\delta_i"] & & L_{i-1}.
    \end{tikzcd}
    \]
    Hence for all $i\geqslant 2$ we have 
    \begin{align*}
    T_{i}(S/R,S) & \cong \ker(\delta_{i})/\im(\delta_{i+1})\\
    & \cong \ker(\eta_{i-1}\sigma_i)/\im(\eta_{i}\sigma_{i+1})\\
    &\cong \sigma^{-1}_i\!\left[\ker(\eta_{i-1})\right]/\im(\eta_{i})\\
    &\cong \ker(\eta_{i-1}).
    \end{align*}
    For the last two isomorphisms we used \eqref{eq_eta}.
\end{proof}

For the long exact sequence
\[
\cdots \longrightarrow \hh_{i+1}(F_iX/F_{i-1}X)\longrightarrow\hh_i(F_{i-1}X)\longrightarrow \hh_i(F_iX) \xrightarrow{\ \eta_i\ }\cdots
\]
and from \ref{item_resolvent_b} and \cref{lem_4.4} we derive an alternative expression for $\aq_i$.

\begin{corollary}
\label{4.5}
    $\aq_{i+1}(S/R,S) \cong \hh_i(F_{i-1}X)$ for $i\geq 3$. \qed
\end{corollary}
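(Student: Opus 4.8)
The plan is to read off $\aq_{i+1}(S/R,S)$ from the long exact homology sequence of
\[
0 \longrightarrow F_{i-1}X \longrightarrow F_iX \longrightarrow F_iX/F_{i-1}X \longrightarrow 0\,,
\]
using \cref{lem_4.4} to connect $\aq_{i+1}$ to this sequence, and remark \ref{item_resolvent_d} to kill the error term as soon as $i \geq 3$.

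First I would invoke \cref{lem_4.4} in the form $\aq_{i+1}(S/R,S) \cong \ker(\eta_i)$, which is valid for all $i \geq 1$; so the task reduces to computing $\ker(\eta_i)$ when $i \geq 3$. Next I would identify $\eta_i$ with a map appearing in the long exact sequence. Under the isomorphism $\hh_i(F_iX/F_{i-1}X) \cong (F_iX/F_{i-1}X)_i \otimes_R S$ of remark \ref{item_resolvent_c}, the homomorphism $\hh_i(F_iX) \to \hh_i(F_iX/F_{i-1}X)$ induced by $F_iX \to F_iX/F_{i-1}X$ sends the class of a cycle $z = x + \sum_j a_j T_j$, with $x \in F_{i-1}X$, to the class of $\sum_j \overline{a}_j T_j$; this is exactly the recipe defining $\eta_i$. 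Hence $\ker(\eta_i)$ is the kernel of the induced map in
\[
\hh_{i+1}(F_iX/F_{i-1}X) \xrightarrow{\ \partial\ } \hh_i(F_{i-1}X) \xrightarrow{\ \iota_*\ } \hh_i(F_iX) \xrightarrow{\ \eta_i\ } \hh_i(F_iX/F_{i-1}X)\,,
\]
so that by exactness $\ker(\eta_i) = \im(\iota_*)$.

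To finish, I would note that remark \ref{item_resolvent_d} gives $\hh_j(F_iX/F_{i-1}X) = 0$ for $i < j < 2i-1$, and that $j = i+1$ lies in this range exactly when $i \geq 3$. Thus for $i \geq 3$ the connecting map $\partial$ has zero domain, whence $\iota_*$ is injective and $\ker(\eta_i) = \im(\iota_*) \cong \hh_i(F_{i-1}X)$; combined with \cref{lem_4.4} this is the assertion. (Remark \ref{item_resolvent_b} enters through the identification \ref{item_resolvent_c} invoked above.)

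I do not expect a genuine obstacle here: once \cref{lem_4.4} is in hand everything is bookkeeping. The one point that needs care is the index count—recognizing that the hypothesis $i \geq 3$ is forced precisely by the requirement $i+1 < 2i-1$ in remark \ref{item_resolvent_d} (for $i = 2$ the group $\hh_3(F_2X/F_1X)$ need not vanish, and the connecting map may be nonzero, which is why $i \geq 3$ cannot be relaxed)—and keeping straight that $\eta_i$ and the long-exact-sequence map coincide only after transporting along the isomorphism of remark \ref{item_resolvent_c}.
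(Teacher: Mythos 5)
Your proof is correct and is essentially the paper's own argument: the corollary is read off from the long exact homology sequence of $0\to F_{i-1}X\to F_iX\to F_iX/F_{i-1}X\to 0$ together with \cref{lem_4.4}, with the vanishing $\hh_{i+1}(F_iX/F_{i-1}X)=0$ for $i\ge 3$ (your remark (d), which is indeed the fact doing the work here, even though the text cites (b) at this point) making the connecting map zero so that $\ker(\eta_i)\cong\hh_i(F_{i-1}X)$. Your index count showing that $i\ge 3$ is exactly the condition $i+1<2i-1$ also matches why the bound cannot be relaxed.
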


Our next objective is to show that the low degree $\aq_i$-modules can be expressed in terms of Koszul homology and Tor. By construction $F_1X$ is the Koszul complex on a system of generators for $I$. We write  $\hh_i=\hh_i(F_1X)$.

\begin{lemma}\label{4.6}
    There is an exact sequence
    \[
\hh_3 \longrightarrow \aq_4(S/R,S)\longrightarrow {\textstyle\bigwedge^2}\hh_1 \longrightarrow \hh_2\longrightarrow \aq_3(S/R,S)\longrightarrow 0.
    \]
\end{lemma}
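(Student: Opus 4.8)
The plan is to read off the asserted five-term sequence from the long exact homology sequence of
\[
0 \longrightarrow F_1X \longrightarrow F_2X \longrightarrow F_2X/F_1X \longrightarrow 0,
\]
identifying the terms with the help of \cref{4.5}, \cref{lem_4.4}, and remarks \ref{item_resolvent_b}, \ref{item_resolvent_c}, \ref{item_resolvent_e}. Write $F_2X=F_1X\langle T_1,\dots,T_e\rangle$ with $\delta T_j=z_j\in(F_1X)_1$; the $T_j$ kill $\hh_1=\hh_1(F_1X)$, so the classes $[z_j]$ generate $\hh_1$ and there is a surjection $p\colon V\coloneqq\bigoplus_{j=1}^e ST_j\twoheadrightarrow\hh_1$, $T_j\mapsto[z_j]$, with kernel $K$. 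The relevant segment of the long exact sequence reads
\[
\hh_3(F_1X)\to\hh_3(F_2X)\to\hh_3(F_2X/F_1X)\xrightarrow{\ \partial\ }\hh_2(F_1X)\to\hh_2(F_2X)\to\hh_2(F_2X/F_1X)\xrightarrow{\ \partial\ }\hh_1(F_1X)\to\hh_1(F_2X).
\]
Here $\hh_i(F_1X)=\hh_i$ by definition, $\hh_3(F_2X)\cong\aq_4(S/R,S)$ by \cref{4.5}, $\hh_1(F_2X)=0$ by \ref{item_resolvent_b}, and $\hh_2(F_2X/F_1X)\cong V$ by \ref{item_resolvent_c}; the last connecting map is then $p$.

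Next I would pin down $\aq_3(S/R,S)$. By \cref{lem_4.4} together with the exact sequence \eqref{eq_eta}, $\aq_3(S/R,S)\cong\ker\eta_2$, and $\eta_2$ is (after the identification of \ref{item_resolvent_c}) the map $\hh_2(F_2X)\to\hh_2(F_2X/F_1X)$ of the long exact sequence. Hence $\ker\eta_2$ equals the image of $\hh_2\to\hh_2(F_2X)$, which by exactness is the quotient of $\hh_2$ by $\im\partial$. This simultaneously produces the surjection $\hh_2\to\aq_3(S/R,S)$ (the last nonzero map of the desired sequence) and shows that its kernel is exactly $\im(\partial\colon\hh_3(F_2X/F_1X)\to\hh_2)$. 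So everything comes down to identifying $\hh_3(F_2X/F_1X)$ and the map $\partial$.

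The heart of the proof is to show $\hh_3(F_2X/F_1X)\cong{\textstyle\bigwedge^2}\hh_1$ in a way that turns $\partial$ into the multiplication map of Koszul homology. By \ref{item_resolvent_e} with $i=2$ we have $\hh_3(F_2X/F_1X)=(\hh_1\otimes_S V)/U$, with $U$ generated by the elements $[z_j]T_k+[z_k]T_j$. Using that $V$ is free, $\hh_1\otimes_S V\cong(V\otimes_S V)/(K\otimes_S V)$; the relations $T_j\otimes T_k+T_k\otimes T_j$ then cut $V\otimes_S V$ down to ${\textstyle\bigwedge^2}V$ — this is where $\tfrac12\in S$ is used, to force $T_j\otimes T_j=0$ — and the image of $K\otimes_S V$ becomes the submodule $K\wedge V\subseteq{\textstyle\bigwedge^2}V$. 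Since ${\textstyle\bigwedge^2}V/(K\wedge V)\cong{\textstyle\bigwedge^2}\hh_1$ for the surjection $V\twoheadrightarrow\hh_1$, this gives the isomorphism. To compute $\partial$, lift a cycle $\sum a_jT_j$ of $(F_2X/F_1X)_3$ (with $a_j\in Z_1(F_1X)$) to $F_2X$ and apply $\delta$: one obtains $-\sum a_jz_j\in Z_2(F_1X)$, whose class is $-\sum[a_j]\cdot[z_j]$, i.e. up to sign the image of $\sum[a_j]\wedge[z_j]$ under the multiplication ${\textstyle\bigwedge^2}\hh_1\to\hh_2$ of the Koszul homology algebra. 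Splicing the identifications of $\hh_3(F_2X/F_1X)$, of $\aq_4(S/R,S)$, and of $\aq_3(S/R,S)$ into the long exact sequence yields precisely the asserted sequence.

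The main obstacle I anticipate is the bookkeeping in the third paragraph: verifying that the presentation from \ref{item_resolvent_e} only uses that the $[z_j]$ generate $\hh_1$ (so no minimality of the resolvent is needed), carrying the signs through the identification ${\textstyle\bigwedge^2}\hh_1\cong\hh_3(F_2X/F_1X)$ and the connecting map, and checking that the map $\eta_2$ of \cref{lem_4.4} really agrees, up to the isomorphism \ref{item_resolvent_c}, with the connecting map $\hh_2(F_2X)\to\hh_2(F_2X/F_1X)$, so that the two descriptions of $\aq_3(S/R,S)$ coincide. Once these routine compatibilities are in hand, the exactness at each of $\aq_4(S/R,S)$, ${\textstyle\bigwedge^2}\hh_1$, $\hh_2$, and $\aq_3(S/R,S)$ is immediate from exactness of the long exact sequence.
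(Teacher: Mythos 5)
Your proposal is correct and follows the same route as the paper: the long exact homology sequence of $0\to F_1X\to F_2X\to F_2X/F_1X\to 0$, with $\aq_4(S/R,S)\cong\hh_3(F_2X)$ from \cref{4.5}, $\aq_3(S/R,S)\cong\ker\eta_2$ from \cref{lem_4.4}, and $\hh_3(F_2X/F_1X)\cong\bigwedge^2\hh_1$ from remark \ref{item_resolvent_e}. The paper states this in two lines; your write-up supplies the identifications (including the use of $\tfrac12\in S$, which is available since $\mathbb{Q}\subseteq R$ throughout \cref{se:cotangent}) that the paper leaves implicit.
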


\begin{proof}
    Consider the long exact sequence
\[
\hh_3 \longrightarrow \hh_3(F_2X)\xrightarrow{\ \eta_3\ } \hh_3(F_2X/F_1X) \longrightarrow\hh_2\longrightarrow\hh_2(F_2X)\xrightarrow{\ \eta_2\ }
\]
derived from $0\to F_1X\to F_2X\to F_2X/F_1X\to 0$. The assertion follows from \ref{item_resolvent_e} and \cref{4.5}.  
\end{proof}

Since in the above sequence the image of ${\textstyle\bigwedge^2}\hh_1 \to \hh_2$ is exactly $\hh_1^2$ we get

\begin{corollary}\label{4.7}
    $\aq_3(S/R,S)\cong \hh_2/\hh_1^2$.\qed
\end{corollary}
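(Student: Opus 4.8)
The plan is to read off the statement from the right-hand end of the exact sequence of \cref{4.6}. Denote by $\mu\colon{\textstyle\bigwedge^2}\hh_1\to\hh_2$ the map occurring there. That sequence is exact at $\hh_2$ and ends $\cdots\to\hh_2\to\aq_3(S/R,S)\to 0$, so it already yields $\aq_3(S/R,S)\cong\hh_2/\im(\mu)$; hence the whole task is to show that $\im(\mu)=\hh_1^2$, the $S$-submodule of $\hh_2=\hh_2(F_1X)$ generated by the products $[a][b]$ with $[a],[b]\in\hh_1$.

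First I would unwind where $\mu$ comes from in the proof of \cref{4.6}: up to the identification supplied by \ref{item_resolvent_e}, it is the connecting homomorphism $\partial\colon\hh_3(F_2X/F_1X)\to\hh_2(F_1X)$ of the long exact sequence of $0\to F_1X\to F_2X\to F_2X/F_1X\to 0$. Write $F_2X=F_1X\langle T_1,\dots,T_e\rangle$ with $\deg T_j=2$ and $\delta T_j=x_j$, where the $x_j$ are $1$-cycles of $F_1X$ whose classes generate $\hh_1$. For a $1$-cycle $a\in(F_1X)_1$ and an index $j$, the element $aT_j\in(F_2X)_3$ satisfies $\delta(aT_j)=-ax_j$, which is a $2$-cycle of $F_1X$; thus the image of $aT_j$ in $F_2X/F_1X$ is a cycle, and $\partial$ carries its class to $[-ax_j]=-[a][x_j]$, the product being taken in the Koszul homology algebra $\hh_\bullet(F_1X)$. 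Under \ref{item_resolvent_e} the class of $aT_j$ corresponds to $[a]\wedge[x_j]$, so
\[
\mu([a]\wedge[x_j])=-[a][x_j]\qquad\text{for all }[a]\in\hh_1\text{ and all }j .
\]

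To finish, observe that since $[x_1],\dots,[x_e]$ generate the $S$-module $\hh_1$, the module ${\textstyle\bigwedge^2}\hh_1$ is generated over $S$ by the elements $[x_i]\wedge[x_j]$; hence $\im(\mu)$ is the $S$-submodule generated by the $[x_i][x_j]$. These same elements generate $\hh_1^2$, because $[a][b]=\sum_{i,j}c_id_j[x_i][x_j]$ whenever $[a]=\sum_ic_i[x_i]$ and $[b]=\sum_jd_j[x_j]$. Therefore $\im(\mu)=\hh_1^2$, and together with the first paragraph this gives $\aq_3(S/R,S)\cong\hh_2/\hh_1^2$.

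The one step I expect to be the real obstacle is the middle paragraph: correctly matching the connecting homomorphism with multiplication in Koszul homology under the bookkeeping of \ref{item_resolvent_e}, in particular pinning down the signs, although the final sign turns out to be irrelevant to the conclusion. Everything else is formal; notably, no freeness of $\hh_1$ is needed, only that it is generated by the classes $[x_j]$.
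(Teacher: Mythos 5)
Your proposal is correct and follows the paper's own route: the paper likewise deduces the corollary from \cref{4.6} by the single observation that the image of $\bigwedge^2\hh_1\to\hh_2$ is exactly $\hh_1^2$. You have merely supplied the (accurate) verification of that observation, identifying the map with the connecting homomorphism and computing $\partial([aT_j])=-[a][x_j]$.
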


Next we want to compare $\aq(S/R,S)$ with $\tor^R(S,S)$. The complex
\[
\cdots\longrightarrow J_3\longrightarrow  J_2\longrightarrow  J_1\longrightarrow 0
\]
is a free $R$-resolution of $I$. 
Therefore $\hh_0(J/IJ)=0$ and 
\[
\hh_i(J/IJ)\cong \tor^R_{i-1}(S,I)\cong \tor^R_i(S,S) \ \text{ for }\ i>0.
\]
$\tor^R(S,S)$ is a skew-symmetric algebra, whose algebra structure is induced by the structure of $X$. We let $\tor^R_+(S,S)=\bigoplus_{i>0}\tor^R_i(S,S)$. From the exact sequence of complexes
\[
0\longrightarrow J^2/IJ\longrightarrow J/IJ\longrightarrow J/J^2\longrightarrow 0
\]
and \cref{4.3} we obtain immediately the following result.

\begin{lemma}
    \begin{enumerate}[(a)]
        \item There exists a long exact sequence
        \[
        \begin{tikzcd}[row sep =2ex]
            \cdots \ar[r] & \hh_i(J^2/IJ) \ar[r] & \tor^R_i(S,S) \ar[r]
             \ar[draw=none]{d}[name=X, anchor=center]{} & \aq_i(S/R,S) \ar[rounded corners,
            to path={ -- ([xshift=3ex]\tikztostart.east)
                      |- (X.center) \tikztonodes
                      -| ([xshift=-2ex]\tikztotarget.west)
                      -- (\tikztotarget)}]{dll}\\
            & \hh_{i-1}(J^2/IJ) \ar[r] & \cdots \ar[r]
             \ar[draw=none]{d}[name=Y, anchor=center]{}& \aq_3(S/R,S) \ar[rounded corners,
            to path={ -- ([xshift=3ex]\tikztostart.east)
                      |- (Y.center) \tikztonodes
                      -| ([xshift=-3ex]\tikztotarget.west)
                      -- (\tikztotarget)}]{dll} \\
            & \hh_2(J^2/IJ) \ar[r] & \tor^R_2(S,S) \ar[r] & \aq_2(S/R,S) \ar[r] & 0.
        \end{tikzcd}
        \]
    \item $\tor_+^R(S,S)^2\subseteq \ker\!\left(\tor^R(S,S)\to \aq^R(S/R,S)\right)$.\qed
    \end{enumerate}
\end{lemma}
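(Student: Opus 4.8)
The approach is simply to take the long exact homology sequence of the short exact sequence of $S$-complexes $0\to J^2/IJ\to J/IJ\to J/J^2\to 0$ displayed just above, and to read off its terms from the identifications already in hand: by \cref{4.3} one has $\hh_i(J/J^2)=\aq_i(S/R,S)$ for $i>0$ and $\hh_0(J/J^2)=\aq_0(S/R,S)=0$, while from the computation recorded a few lines earlier one has $\hh_0(J/IJ)=0$ and $\hh_i(J/IJ)\cong\tor^R_i(S,S)$ for $i>0$. (Here the maps $\tor^R_i(S,S)\to\aq_i(S/R,S)$ are, by construction, the ones induced on homology by the quotient $J/IJ\to J/J^2$, i.e. the natural maps.) Substituting, the long exact sequence already has the shape
\[
\cdots\to\hh_i(J^2/IJ)\to\tor^R_i(S,S)\to\aq_i(S/R,S)\to\hh_{i-1}(J^2/IJ)\to\cdots
\]
all the way down. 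To see it terminates exactly at $\tor^R_2(S,S)\to\aq_2(S/R,S)\to 0$ I would compute $J^2/IJ$ in the bottom two degrees: $(J^2/IJ)_0=I^2/I^2=0$, and $J^2_1=(F_0X)_1+IX_1=IX_1=(IJ)_1$ since $F_0X=R$ is concentrated in degree $0$, so $(J^2/IJ)_1=0$ as well. Hence $\hh_0(J^2/IJ)=\hh_1(J^2/IJ)=0$; the vanishing of $\hh_1(J^2/IJ)$ forces $\aq_2(S/R,S)\to\hh_1(J^2/IJ)=0$ to be zero, so $\tor^R_2(S,S)\to\aq_2(S/R,S)$ is surjective and the sequence closes off as claimed (and the leftover tail $0\to\tor^R_1(S,S)\to\aq_1(S/R,S)\to 0$ recovers the natural isomorphism $\tor^R_1(S,S)\cong I/I^2$).

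\textbf{Plan for (b).} The idea is to exhibit every product of two positive-degree Tor classes as coming from the subcomplex $J^2/IJ$, and then invoke (a). Let $a\in\tor^R_i(S,S)$, $b\in\tor^R_j(S,S)$ with $i,j\geq 1$. Choose representatives: cycles $\bar\alpha\in(J/IJ)_i$, $\bar\beta\in(J/IJ)_j$ lifted from $\alpha\in J_i$, $\beta\in J_j$ (possible since $J_i=X_i$ for $i>0$). Because the algebra structure on $\tor^R(S,S)$ is induced from the DG algebra $X$ (equivalently, from the non-unital DG algebra $J/IJ=S\otimes_R J$), the product $ab$ is represented by the cycle $\overline{\alpha\beta}\in(J/IJ)_{i+j}$. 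Since $\alpha,\beta$ lie in the ideal $J$, the product $\alpha\beta$ lies in $J^2$, so $\overline{\alpha\beta}$ actually lies in the subcomplex $J^2/IJ$ and is a cycle there; it therefore defines a class in $\hh_{i+j}(J^2/IJ)$ mapping to $ab$ under the map $\hh_{i+j}(J^2/IJ)\to\tor^R_{i+j}(S,S)$ of (a). By exactness of the sequence in (a) (and $i+j\geq 2$), the image of that map equals $\ker\!\big(\tor^R_{i+j}(S,S)\to\aq_{i+j}(S/R,S)\big)$, so $ab$ lies in this kernel. As the kernel is an $S$-submodule of $\tor^R(S,S)$ and $\tor^R_+(S,S)^2$ is generated by such products, the containment follows.

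\textbf{Where the care is needed.} I do not expect a genuine obstacle — the text itself says the result follows ``immediately'' — but two small verifications carry the argument. The first is the degree-$\leq 1$ vanishing of $J^2/IJ$: this is exactly what lets the sequence in (a) stop cleanly at $\aq_2$ instead of running on into $\tor^R_1$ and $\aq_1$, and it is the only place the explicit description of $J^2$ is used. The second is checking that $\overline{\alpha\beta}$ really is $\delta$-closed in $J^2/IJ$ in the proof of (b): by the Leibniz rule $\delta(\alpha\beta)=\delta(\alpha)\beta\pm\alpha\,\delta(\beta)$, and each summand lies in $IX\cdot J+J\cdot IX\subseteq IJ$ because $\delta\alpha,\delta\beta\in IX$ (as $\bar\alpha,\bar\beta$ are cycles of $J/IJ$) and $J$ is an ideal of $X$. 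Spelling out these two points and letting the long exact sequence do the rest completes the proof.
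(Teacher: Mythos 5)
Your proposal is correct and follows exactly the route the paper intends: the lemma is stated as an immediate consequence of the long exact homology sequence of $0\to J^2/IJ\to J/IJ\to J/J^2\to 0$ together with the identifications $\hh_i(J/IJ)\cong\tor^R_i(S,S)$ and $\hh_i(J/J^2)\cong\aq_i(S/R,S)$ from \cref{4.3}, and your two supplementary checks (the vanishing of $(J^2/IJ)_0$ and $(J^2/IJ)_1$, and the Leibniz-rule verification that products of positive-degree cycles are cycles in $J^2/IJ$) are precisely the details the paper leaves to the reader.
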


We can easily compute $ \hh_2(J^2/IJ)$. Let 
\[
F_1X=R\langle T_{11},\ldots,T_{1e_1}, \delta(T_{1i})=a_i\rangle \ \text{ and } \ F_2X=F_1X\langle T_{21},\ldots,T_{2e_2}, \delta(T_{2i})=z_i\rangle.
\]
We have an exact sequence
\[
V_1\oplus V_2 \xrightarrow{\ \phi\ } \bigoplus_{i<j} ST_{1i}T_{1j} \longrightarrow \hh_2(J^2/IJ)\longrightarrow 0
\]
where $V_1= \bigoplus_{i,k} ST_{1i}T_{2k}$ and $V_2=\bigoplus_{i<j<k} ST_{1i}T_{1j}T_{1k}$, 
with 
\[
\phi(V_2)=0\quad\text{and}\quad \phi(T_{1i}T_{2k})= T_{1i}z_k.
\]
From this it follows that
\[
\hh_2(J^2/IJ)\cong {\textstyle\bigwedge^2}I/I^2\cong {\textstyle\bigwedge^2}\tor^R_1(S,S).
\]

\begin{corollary}
    There is an exact sequence
        \[
        \begin{tikzcd}[row sep =2ex, column sep = 4ex]
             \cdots \ar[r] & {\displaystyle\frac{\tor_3^R(S,S)}{\tor_1^R(S,S)^3+\tor_1^R(S,S)\tor_2^R(S,S)}} \ar[r]
             \ar[draw=none]{d}[name=Y, anchor=center]{}& \aq_3(S/R,S) \ar[rounded corners,
            to path={ -- ([xshift=3ex]\tikztostart.east)
                      |- (Y.center) \tikztonodes
                      -| ([xshift=-3ex]\tikztotarget.west)
                      -- (\tikztotarget)}]{dll} \\
             {\textstyle\bigwedge^2}I/I^2 \ar[r] & \tor_2^R(S,S) \ar[r] & \aq_2(S/R,S) \ar[r] & 0. \qed
        \end{tikzcd}
        \]
\end{corollary}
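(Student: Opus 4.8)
The plan is to read off the claimed sequence from part~(a) of the preceding lemma after two substitutions. \emph{First}, I would use the identification $\hh_2(J^2/IJ)\cong\bigwedge^2 I/I^2$ computed just above (recall $I/I^2\cong\tor_1^R(S,S)$ in degree~$1$). Splicing this into the long exact sequence of part~(a) turns its tail
\[
\cdots\longrightarrow \aq_3(S/R,S)\longrightarrow \hh_2(J^2/IJ)\longrightarrow \tor_2^R(S,S)\longrightarrow \aq_2(S/R,S)\longrightarrow 0
\]
into
\[
\cdots\longrightarrow \aq_3(S/R,S)\longrightarrow {\textstyle\bigwedge^2}I/I^2\longrightarrow \tor_2^R(S,S)\longrightarrow \aq_2(S/R,S)\longrightarrow 0,
\]
with exactness at $\bigwedge^2 I/I^2$, at $\tor_2^R(S,S)$, and at $\aq_2(S/R,S)$ inherited unchanged.

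\emph{Second}, I would replace the term $\tor_3^R(S,S)$ sitting just to the left of $\aq_3(S/R,S)$ by the quotient $\tor_3^R(S,S)/(\tor_1^R(S,S)^3+\tor_1^R(S,S)\tor_2^R(S,S))$. The input is part~(b) of the lemma: $\tor_+^R(S,S)^2$ lies in the kernel of the natural map $\tor^R(S,S)\to \aq(S/R,S)$. In homological degree~$3$, the degree-$3$ component of the square of the augmentation ideal is $(\tor_+^R(S,S)^2)_3=\tor_2^R(S,S)\tor_1^R(S,S)=\tor_1^R(S,S)\tor_2^R(S,S)$ by graded-commutativity, and this contains $\tor_1^R(S,S)^3=\tor_1^R(S,S)\cdot\tor_1^R(S,S)^2$ since $\tor_1^R(S,S)^2\subseteq \tor_2^R(S,S)$. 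Hence $\tor_1^R(S,S)^3+\tor_1^R(S,S)\tor_2^R(S,S)\subseteq\ker(\tor_3^R(S,S)\to\aq_3(S/R,S))$, so the map $\tor_3^R(S,S)\to\aq_3(S/R,S)$ factors through the displayed quotient.

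\emph{Finally}, I would invoke the elementary fact that if $A\xrightarrow{f}B\xrightarrow{g}C$ is exact and $K\subseteq\ker g$, then $A\xrightarrow{\pi\circ f}B/K\xrightarrow{\bar g}C$ is again exact, since $\operatorname{im}(\pi\circ f)=\pi(\operatorname{im}f)=\pi(\ker g)=\ker\bar g$. Taking $B=\tor_3^R(S,S)$, $K=\tor_1^R(S,S)^3+\tor_1^R(S,S)\tor_2^R(S,S)$, $f$ the incoming map $\hh_3(J^2/IJ)\to\tor_3^R(S,S)$ from part~(a), and $g$ the map $\tor_3^R(S,S)\to\aq_3(S/R,S)$, this yields exactness of
\[
\cdots\longrightarrow \frac{\tor_3^R(S,S)}{\tor_1^R(S,S)^3+\tor_1^R(S,S)\tor_2^R(S,S)}\longrightarrow \aq_3(S/R,S)\longrightarrow {\textstyle\bigwedge^2}I/I^2\longrightarrow\cdots,
\]
which, joined to the tail from the first step, is precisely the asserted sequence. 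There is no genuine difficulty here; the only points needing a little care are the bookkeeping with the graded-commutative algebra structure on $\tor^R(S,S)$---in particular that $\tor_1^R(S,S)^3+\tor_1^R(S,S)\tor_2^R(S,S)$ really is the degree-$3$ part of the square of the augmentation ideal---and the trivial homological lemma about passing an exact sequence to the quotient of a term by a submodule of the kernel of the outgoing map.
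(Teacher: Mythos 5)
Your proof is correct and is essentially the argument the paper intends: the corollary is stated with no separate proof precisely because it follows by splicing the identification $\hh_2(J^2/IJ)\cong\bigwedge^2 I/I^2$ into the long exact sequence of part~(a) and using part~(b) to pass to the quotient of $\tor_3^R(S,S)$ by the degree-$3$ part of $\tor_+^R(S,S)^2$. Your careful justification of the quotient step (the elementary exactness lemma and the observation that $\tor_1^R(S,S)^3\subseteq\tor_1^R(S,S)\tor_2^R(S,S)$) fills in exactly the routine details the paper omits.
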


Comparing this result with \cref{4.7} we get

\begin{corollary}\label{4.10}
    If $\aq_2(S/R,S)=0$ and $\hh_2=\hh_1^2$
    then
    \[
    \tor^R_2(S,S)= {\textstyle \bigwedge^2}I/I^2.\qed
    \]
\end{corollary}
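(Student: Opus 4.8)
The statement is a short diagram chase: it follows by comparing the exact sequence of the preceding corollary with \cref{4.7}, and there is essentially no new content to supply.

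First I would record the relevant segment of the exact sequence established in the corollary immediately above, namely
\[
\cdots \longrightarrow \aq_3(S/R,S)\xrightarrow{\ f\ }{\textstyle\bigwedge^2}I/I^2\xrightarrow{\ g\ }\tor^R_2(S,S)\longrightarrow \aq_2(S/R,S)\longrightarrow 0,
\]
where $g$ is the natural map coming from the morphism of complexes $J^2/IJ\to J/IJ$, which under the identifications $\hh_2(J^2/IJ)\cong{\textstyle\bigwedge^2}I/I^2$ and $\tor^R_1(S,S)\cong I/I^2$ is the multiplication map of the skew-symmetric algebra $\tor^R(S,S)$. The hypothesis $\aq_2(S/R,S)=0$, together with exactness at $\tor^R_2(S,S)$, shows that $g$ is surjective.

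Next I would invoke \cref{4.7}, which gives $\aq_3(S/R,S)\cong\hh_2/\hh_1^2$. The hypothesis $\hh_2=\hh_1^2$ therefore forces $\aq_3(S/R,S)=0$, hence $f=0$; exactness at ${\textstyle\bigwedge^2}I/I^2$ then yields $\ker g=\im f=0$, so $g$ is also injective. Combining the two steps, $g$ is an isomorphism, and therefore $\tor^R_2(S,S)\cong{\textstyle\bigwedge^2}I/I^2$, as claimed.

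No step here presents a genuine obstacle. The only point deserving a moment's attention is the identification of $g$ with the algebra multiplication, so that the stated isomorphism is the expected one; but this is already built into the construction of the exact sequence in the previous corollary (via the computation $\hh_2(J^2/IJ)\cong{\textstyle\bigwedge^2}I/I^2$), so nothing more is required.
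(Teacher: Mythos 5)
Your proof is correct and is exactly the argument the paper intends: the paper gives no written proof beyond "comparing this result with \cref{4.7}," and the comparison it has in mind is precisely your diagram chase — $\aq_2(S/R,S)=0$ gives surjectivity of the map $\bigwedge^2 I/I^2\to\tor_2^R(S,S)$ from the preceding exact sequence, while $\hh_2=\hh_1^2$ kills $\aq_3(S/R,S)$ by \cref{4.7} and hence gives injectivity. Nothing to add.
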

The condition $\aq_2(S/R,S)=0$  means that $I$ is syzygetic. 

We now use the above  lemmas to show in some cases the vanishing or non-vanishing of the $\aq_i$.

\begin{theorem}\label{4.11}
    Suppose $R$ is a local Cohen-Macaulay ring and $I\subseteq R$ is a perfect ideal of height $2$, and set $S=R/I$.  Then
\begin{enumerate}[\rm (a)]
    \item $\aq_2(S/R,S) =0$\footnote{The vanishing of $\aq_2(S/R,S)$ only holds under the additional assumption that $I$ is generically complete intersection; see \cite[Theorem (3.2)]{Avramov/Herzog:1980}.} and $\aq_3(S/R,S)=0$;
    \item $\aq_4(S/R,S) =0$ if and only if $I$ is a complete intersection ideal.
\end{enumerate}    
\end{theorem}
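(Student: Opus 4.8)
Throughout write $n=\mu_R(I)$ and let $\hh_i=\hh_i(\kos(x_1,\dots,x_n;R))=\hh_i(F_1X)$ be the Koszul homology on a minimal generating set of $I$. The first thing to record is that, $R$ being Cohen--Macaulay and $I$ perfect of height $2$, one has $\pdim_R S=\grade I=2$, so that $\tor^R_i(S,S)=0$ for every $i\geq 3$. (For part (a) I would, following \cite{Avramov/Herzog:1980}, also assume $I$ is generically a complete intersection; this is automatic when $R$ is regular, and as $I=\fm^2\subseteq k[[x,y]]$ shows it genuinely cannot be omitted, since there $S$ is Golod and $\hh_1^2=0\neq\hh_2$.)

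For part (a) the plan is to substitute $\tor^R_3(S,S)=0$ into the long exact sequence relating $\aq_i(S/R,S)$, $\tor^R_i(S,S)$ and $\textstyle\bigwedge^2 I/I^2$ (the corollary immediately preceding \cref{4.10}); it then collapses to
\[
0\longrightarrow \aq_3(S/R,S)\longrightarrow {\textstyle\bigwedge^2}(I/I^2)\xrightarrow{\ \mu\ }\tor^R_2(S,S)\longrightarrow \aq_2(S/R,S)\longrightarrow 0,
\]
where $\mu$ is the multiplication of the $\tor$-algebra. Hence (a) is \emph{equivalent} to the assertion that $\mu$ is an isomorphism --- that is, by \cref{4.7} and the remark that $\aq_2(S/R,S)=0$ means $I$ is syzygetic, that $I$ is syzygetic with $\hh_2=\hh_1^2$. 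To prove this I would invoke Hilbert--Burch: $S$ has a free resolution $0\to R^{n-1}\xrightarrow{\psi}R^n\xrightarrow{\varphi}R\to S\to 0$ with $I=I_{n-1}(\psi)$, which (being a resolution of a cyclic module of projective dimension $\leq 3$) carries a dg algebra structure whose multiplication $m\colon\bigwedge^2R^n\to R^{n-1}$ satisfies $\psi\circ m=\partial$ for the Koszul boundary $\partial$. Tensoring with $S$ gives $\tor^R_1(S,S)=\coker(\psi\otimes S)\cong I/I^2$, $\tor^R_2(S,S)=\ker(\psi\otimes S)$, and identifies $\mu$ with the corestriction of $m\otimes S$; one then checks directly that $m\otimes S\colon\bigwedge^2 S^n\to\ker(\psi\otimes S)$ is bijective. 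This last verification is the crux: the generically-complete-intersection hypothesis makes $\tor^R(S,S)$ the exterior algebra on $I/I^2$ at every associated prime of $S$, so $\ker\mu$ and $\coker\mu$ vanish in codimension $0$, and the perfectness of $I$ (no embedded primes, everything maximal Cohen--Macaulay) upgrades this to a genuine isomorphism. Alternatively one may simply cite that a perfect, generically complete intersection ideal of grade $2$ is syzygetic.

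For part (b), the ``if'' direction is immediate: a complete intersection ideal is minimally generated by a regular sequence, so $F_1X=\kos(x_1,\dots,x_g;R)$ is already a resolvent of $\varphi$, whence $\Omega_{X/R}\otimes_XS$ is concentrated in homological degree $1$ and $\aq_i(S/R,S)=0$ for every $i\neq 1$, in particular $\aq_4(S/R,S)=0$. For ``only if'', assume $\aq_4(S/R,S)=0$. Part (a) gives $\aq_3(S/R,S)=0$, so in the exact sequence of \cref{4.6} the map ${\textstyle\bigwedge^2}\hh_1\to\hh_2$ is surjective; and $\aq_4(S/R,S)=0$ forces it to be injective, so ${\textstyle\bigwedge^2}\hh_1\xrightarrow{\ \cong\ }\hh_2$. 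It remains to deduce that $I$ is a complete intersection, and I see two natural routes. The first: show $\aq_4(S/R,S)=0$ propagates --- via \cref{4.5} and the vanishing $\hh_j(F_iX/F_{i-1}X)=0$ for $i<j<2i-1$ --- to $\aq_i(S/R,S)=0$ for all $i\geq 2$; then the minimal resolvent is finite, so $\pdim_S(I/I^2)<\infty$, Platte's \cref{3.1} makes $S$ quasi-Gorenstein, hence (being Cohen--Macaulay) Gorenstein, and a grade-$2$ Gorenstein ideal is a complete intersection by Hilbert--Burch (its Cohen--Macaulay type $n-1$ must equal $1$). The second, hands-on route: the $\hh_i$ are maximal Cohen--Macaulay $S$-modules with $\hh_{n-2}\cong K_S$, and (using again that $I$ is generically a complete intersection) the Koszul homology algebra is generated over $S$ in degree $1$; the degree-$2$ isomorphism ${\textstyle\bigwedge^2}\hh_1\cong\hh_2$ then propagates to ${\textstyle\bigwedge^\bullet}\hh_1\xrightarrow{\ \cong\ }\hh_\bullet$, so $K_S\cong{\textstyle\bigwedge^{n-2}}\hh_1$, which pins the type of $S$ to $1$, whence $n=2$, $\hh_1=0$, and $I$ is a complete intersection.

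The main obstacle in both parts is the same in spirit: the structural statement about perfect grade-$2$ ideals --- that $\mu$ is an isomorphism in (a), and that ${\textstyle\bigwedge^2}\hh_1\cong\hh_2$ forces $I$ to be a complete intersection in (b) --- each of which genuinely uses the Hilbert--Burch theorem together with the generically-complete-intersection hypothesis. Once those are in hand, all the remaining bookkeeping is supplied by the identifications in \cref{4.3,lem_4.4,4.5,4.6,4.7,4.10} and the vanishing $\tor^R_{\geq 3}(S,S)=0$.
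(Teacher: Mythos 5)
The paper's own proof is a three-line citation: it quotes from \cite[Theorem (2.1)]{Avramov/Herzog:1980} that for a perfect grade-two ideal the Koszul homology $\hh_\bullet$ on a minimal generating set satisfies (1) each $\hh_i$ is Cohen--Macaulay, (2) $\hh_i=\hh_1^i$ for $i>0$, and (3) $\ker(\bigwedge^i\hh_1\to\hh_i)\neq 0$ for $2\le i\le \mu(I)-2$, and then reads the theorem off from \cref{4.6,4.7}. Your part (a) is a legitimate reorganization of this: granting $\tor^R_3(S,S)=0$, the four-term exact sequence $0\to\aq_3\to\bigwedge^2(I/I^2)\xrightarrow{\mu}\tor^R_2(S,S)\to\aq_2\to 0$ is correct, and you rightly observe that (a) amounts to ``$I$ is syzygetic and $\hh_2=\hh_1^2$.'' But your proposed direct verification that $\mu$ is bijective---``$\ker\mu$ and $\coker\mu$ vanish in codimension $0$, and perfectness upgrades this''---is not an argument as it stands: promoting generic injectivity requires knowing $\bigwedge^2(I/I^2)$ has no torsion, and such depth statements are exactly the nontrivial content of the Avramov--Herzog theorem. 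A warning sign is that the superficially analogous map $\bigwedge^2\hh_1\to\hh_2$ is \emph{never} injective once $\mu(I)\ge 4$, by fact (3). Your fallback (``cite that such ideals are syzygetic'') covers surjectivity of $\mu$ only; injectivity still needs $\hh_2=\hh_1^2$, i.e.\ fact (2). So for (a) you must, as the paper does, cite both \cite[Theorems (2.1) and (3.2)]{Avramov/Herzog:1980}.

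The genuine gaps are in the ``only if'' half of (b). You correctly get $\bigwedge^2\hh_1\xrightarrow{\cong}\hh_2$ from $\aq_3=\aq_4=0$ via \cref{4.6}, but neither route from there works. Route 1: vanishing of $\aq_4$ does not propagate to higher $\aq_i$ by any formal argument with \cref{4.5}---the proof of \cref{4.14} exhibits ideals with $\aq_4=0$ and $\aq_5\neq0$, so no such propagation principle exists; moreover ``the minimal resolvent is finite, so $\pdim_S(I/I^2)<\infty$'' is circular (if $\aq_i=0$ for $i\ge 2$ then $L$ is the minimal resolution of $I/I^2$, and its finiteness is \emph{equivalent} to what you are trying to prove), and Platte's \cref{3.1} is only available for $R$ Gorenstein, not merely Cohen--Macaulay. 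Route 2: generation of the Koszul homology algebra in degree one gives surjectivity of $\bigwedge^i\hh_1\to\hh_i$, never injectivity, so the degree-$2$ isomorphism does not propagate; $\hh_{\mu(I)-2}\cong K_S$ again needs $R$ Gorenstein; and $K_S\cong\bigwedge^{\mu(I)-2}\hh_1$ does not pin the type to $1$ (with $\mu_S(\hh_1)=\mu(I)-1$ one gets $\binom{\mu(I)-1}{\mu(I)-2}=\mu(I)-1=\mu_S(K_S)$, consistent for every $\mu(I)$). What actually closes the argument is fact (3): since $\im(\aq_4\to\bigwedge^2\hh_1)=\ker(\bigwedge^2\hh_1\to\hh_2)$, the hypothesis $\aq_4=0$ contradicts fact (3) unless $\mu(I)\le 3$; and if $\mu(I)=3$ then $\hh_2=\hh_3=0$, so \cref{4.6} gives $\aq_4\cong\bigwedge^2\hh_1\neq 0$ because $\hh_1\cong\ext^2_R(S,R)$ needs $\mu(I)-1=2$ generators by the Hilbert--Burch resolution. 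Hence $\mu(I)=2$ and $I$ is a complete intersection.
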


\begin{proof}
    Suppose that $I$ is minimally generated by $n+1$ elements. In \cite[Theorem (2.1)]{Avramov/Herzog:1980} it is proved the following:
    \begin{enumerate}
        \item $\hh_i$ is a Cohen-Macaulay module for $i\geq 0$;
        \item $\hh_i=\hh_1^i$ for $i>0$;
        \item $\ker(\bigwedge^i \hh_1 \to \hh_i)\neq 0$ for $i= 2,3,\ldots,n-1$.
    \end{enumerate}
    Here $\hh_i$ denotes as before the Koszul homology of a sequence $a_1,\ldots ,a_{n+1}$ generating $I$. From these facts and \ref{4.6} and \ref{4.7} the assertion follows.
\end{proof}

\begin{corollary}
    With the assumptions of \ref{4.11}, and the further assumption that $R$ is Gorenstein, we have
    \[
{\textstyle \bigwedge^2}I/I^2 \xrightarrow{\ \cong\ }\tor_2^R(S,S) \cong (K_S)^* \,,
    \]
    where $K_S$ is the canonical module of $S$, and where $(-)^*=\Hom_S(-,S)$.
\end{corollary}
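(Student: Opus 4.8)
The plan is to treat the two isomorphisms separately. For $\bigwedge^2 I/I^2 \xrightarrow{\ \cong\ } \tor_2^R(S,S)$ I would simply assemble results already in hand: by \cref{4.11}(a) we have $\aq_2(S/R,S)=0$, and the facts recalled in the proof of \cref{4.11} give $\hh_2=\hh_1^2$ (equivalently $\aq_3(S/R,S)=0$, by \cref{4.7}). Plugging $\aq_2(S/R,S)=0$ and $\hh_2=\hh_1^2$ into \cref{4.10} yields $\tor_2^R(S,S)=\bigwedge^2 I/I^2$; inspecting the exact sequence of the corollary immediately preceding \cref{4.10}, namely the piece $\aq_3(S/R,S)\to \bigwedge^2 I/I^2\to \tor_2^R(S,S)\to \aq_2(S/R,S)\to 0$ whose two outer terms now vanish, identifies this isomorphism with the natural map coming from the skew-symmetric algebra structure on $\tor^R(S,S)$ together with $\tor_1^R(S,S)\cong I/I^2$. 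This half is pure bookkeeping.

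The content lies in $\tor_2^R(S,S)\cong (K_S)^*$, and this is where the Gorenstein hypothesis on $R$ enters. Since $I$ is perfect of height $2$, the ring $S$ is Cohen--Macaulay with $\pdim_R S=2$ and $\ext^i_R(S,R)=0$ for $i\neq 2$; fix a minimal free $R$-resolution $0\to F_2\xrightarrow{\ \partial_2\ }F_1\xrightarrow{\ \partial_1\ }F_0=R\to S\to 0$. As $R$ is Gorenstein local we have $K_R\cong R$, hence $K_S\cong \ext^2_R(S,R)$, and dualizing the resolution into $R$ produces an exact sequence $0\to F_0^*\xrightarrow{\ \partial_1^*\ }F_1^*\xrightarrow{\ \partial_2^*\ }F_2^*\to K_S\to 0$, a (minimal) free resolution of $K_S$. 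Now apply $\Hom_R(-,S)$ to the right-hand segment $F_1^*\xrightarrow{\ \partial_2^*\ }F_2^*\to K_S\to 0$. Using the natural identification $\Hom_R(F_i^*,S)\cong F_i\otimes_R S$, under which $\Hom_R(\partial_2^*,S)$ becomes $\partial_2\otimes_R S$, together with left-exactness of $\Hom$, one gets $(K_S)^*=\Hom_R(K_S,S)\cong \ker(\partial_2\otimes_R S\colon F_2\otimes_R S\to F_1\otimes_R S)$. But this kernel is precisely $\hh_2(F\otimes_R S)=\tor_2^R(S,S)$, since $F$ is concentrated in homological degrees $0,1,2$. (Here I also use $\Hom_R(K_S,S)=\Hom_S(K_S,S)=(K_S)^*$, as $I$ annihilates both modules.) Composing with the isomorphism of the first paragraph gives the corollary.

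I expect the main, and only mildly delicate, obstacle to be the string of identifications in the last step: checking that $\Hom_R(F_i^*,S)\cong F_i\otimes_R S$ is natural and carries $\Hom_R(\partial_2^*,S)$ to $\partial_2\otimes_R S$, and recalling the standard fact that for a codimension-$2$ perfect quotient of a Gorenstein local ring the canonical module is $\ext^2_R(S,R)$, with the dual of the minimal resolution serving as its minimal resolution. Both are routine, but they carry the argument; everything else is a direct appeal to \cref{4.7,4.10,4.11}.
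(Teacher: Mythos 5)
Your proposal is correct and follows essentially the same route as the paper: the first isomorphism is obtained exactly as you say, by feeding \cref{4.11} into \cref{4.10}, and for the second the paper likewise takes the length-one free resolution $0\to G\xrightarrow{\varphi}F\to I\to 0$, identifies $\tor_2^R(S,S)$ with $\ker(\varphi\otimes_RS)$ and $K_S$ with $\coker(\varphi^*\otimes_RS)$, and concludes by the same duality of free $S$-modules you spell out. Your version merely makes explicit the identifications $\Hom_R(F_i^*,S)\cong F_i\otimes_RS$ and $K_S\cong\ext^2_R(S,R)$ that the paper leaves to the reader.
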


\begin{proof}
    The first isomorphism follows from \ref{4.10} and \ref{4.11}. To prove the second isomorphism consider a free $R$-resolution of $I$:
    \[
    0\longrightarrow G \xrightarrow{\ \varphi\ } F\longrightarrow I \longrightarrow0.
    \]
We have $\tor_2^R(S,S)\cong \ker (\varphi\otimes_R S)$ and $K_S\cong \coker(\varphi^*\otimes_R S)$. This implies the assertion.
\end{proof}

\setcounter{subsection}{13}

\begin{theorem}\label{4.14}
    Suppose $I\subseteq R$ has finite projective dimension and is an almost complete intersection, and set $S=R/I$. Then: 
    \begin{enumerate}[\quad \rm(a)]
        \item $\aq_3(S/R,S)=0$;
        \item $I$ is a complete intersection ideal if and only if
        \[
        \aq_4(S/R,S)=0\text{ and } \aq_5(S/R,S)=0\,.
        \]
    \end{enumerate}
\end{theorem}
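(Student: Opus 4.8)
The strategy is to run exactly the same machinery that proved Theorem \ref{4.11}: compute the relevant Koszul homology algebra of a minimal generating set of the almost complete intersection $I$, then feed that information into the exact sequences of \cref{4.6,4.7,lem_4.4,4.5}. Write $g=\grade I$ and suppose $I$ is minimally generated by $a_1,\dots,a_{g+1}$ (if $\mu_R(I)\le g$ then $I$ is already a complete intersection and there is nothing to prove, so assume $\mu_R(I)=g+1$). Let $\hh_i=\hh_i(F_1X)$ be the Koszul homology on $a_1,\dots,a_{g+1}$. The key input is the structure of this homology for an almost complete intersection of finite projective dimension: because $I$ is perfect (this is the Aoyama--Matsuoka case, or follows since $\pdim_R S<\infty$ forces perfection once $\mu_R(I)=\grade I+1$), the Koszul complex $\kos(a_1,\dots,a_{g+1};R)$ has length $g+1$ and its homology is concentrated so that $\hh_i=0$ for $i\ge 2$, while $\hh_1\cong \ext_R^g(S,R)$ is the canonical-type module (an almost complete intersection is linked to a Gorenstein, in fact complete-intersection, ideal, and the Koszul homology is cyclic: $\hh_1$ is generated by a single element). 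This last point — that $\hh_1$ is cyclic and $\hh_i=0$ for $i\ge 2$ — is what makes the almost complete intersection case rigid, and I would isolate it as the main lemma.

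Granting that, part (a) is immediate: by \cref{4.7}, $\aq_3(S/R,S)\cong \hh_2/\hh_1^2 = 0$ since $\hh_2=0$. For part (b), first note $\bigwedge^2\hh_1=0$ because $\hh_1$ is cyclic; then the exact sequence of \cref{4.6},
\[
\hh_3 \longrightarrow \aq_4(S/R,S)\longrightarrow {\textstyle\bigwedge^2}\hh_1 \longrightarrow \hh_2\longrightarrow \aq_3(S/R,S)\longrightarrow 0,
\]
together with $\hh_3=0$, forces $\aq_4(S/R,S)=0$ unconditionally. Wait — that would make (b) vacuous, so the cyclicity of $\hh_1$ must fail in general; what is really true is that $\hh_1$ has a free summand exactly when $I$ is a complete intersection (Gulliksen, \cref{1.2}), and for a genuine almost complete intersection $\hh_1$ is a nonzero module without free summand but $\bigwedge^2\hh_1$ need not vanish. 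So the correct approach is: $\aq_4(S/R,S)=0$ iff the map $\bigwedge^2\hh_1\to\hh_2$ is injective, and $\aq_5(S/R,S)\cong\hh_2(F_2X)$ by \cref{4.5} with $i=4$ — actually \cref{4.5} gives $\aq_{i+1}\cong\hh_i(F_{i-1}X)$ for $i\ge 3$, so $\aq_5\cong\hh_3(F_2X)$. One then chases the long exact sequence coming from $0\to F_1X\to F_2X\to F_2X/F_1X\to 0$, using property \ref{item_resolvent_e} to identify $\hh_3(F_2X/F_1X)$ as a quotient of $\hh_1\otimes(\text{span of the }T_{2k})$ by the "Koszul relations" $[x_j]T_k+(-1)^2[x_k]T_j$, and deduces that simultaneous vanishing of $\aq_4$ and $\aq_5$ kills enough of $\hh_1$ to force a free summand, whence \cref{1.2} applies.

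Concretely, the key steps in order: (1) reduce to $\mu_R(I)=g+1$ and record that $I$ is perfect; (2) prove the main lemma on the Koszul homology of an almost complete intersection — that $\hh_i=0$ for $i\ge 2$ and, more delicately, control the submodule structure of $\hh_1$ and of $\bigwedge^2\hh_1$ relative to $\hh_1^2$, distinguishing the complete-intersection case by the presence of a free summand; (3) apply \cref{4.7} to get (a); (4) for the "only if" direction of (b), observe that if $I$ is a complete intersection then $F_1X$ already is the resolvent, $\Omega_{X/R}\otimes_XS$ is the Koszul complex on a regular sequence tensored down, which is exact in positive degrees, so all higher $\aq_i$ vanish; (5) for the "if" direction, assume $\aq_4=\aq_5=0$, use \cref{4.6} to conclude $\bigwedge^2\hh_1\hookrightarrow\hh_2$ and use \cref{4.5}/\ref{item_resolvent_e} and the long exact sequence of $F_1X\to F_2X$ to conclude that the Koszul-relation map out of $\hh_1^{\oplus e_2}$ is injective, then translate this rigidity — via a Nakayama/minimality argument on the minimal resolvent — into the statement that $\hh_1$ has a nonzero free summand, and invoke \cref{1.2} (Gulliksen). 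The main obstacle is step (5): extracting "finite projective dimension of $I/I^2$ is not even assumed here, only finiteness of $\pdim_R S$" — we must get by with the vanishing of just $\aq_4$ and $\aq_5$ rather than all higher cotangent modules, so the argument has to be a finite-stage syzygetic computation, and the delicate point is showing that injectivity of these low-degree comparison maps already propagates back to a free summand in $\hh_1$; this is where the specific numerology of almost complete intersections (only one extra generator beyond a regular sequence) must be used decisively, presumably exactly as in \cite[Theorem (2.1)]{Avramov/Herzog:1980} but with the height-$2$ hypothesis replaced by the almost-complete-intersection hypothesis.
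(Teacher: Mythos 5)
Part (a) and the forward direction of (b) in your proposal are correct and match the paper: since $\mu_R(I)=\grade I+1$, the depth sensitivity of the Koszul complex already gives $\hh_i=0$ for $i\ge 2$ (you do not need perfection for this, and your claim that finite projective dimension forces an almost complete intersection to be perfect is both unjustified and unnecessary), so \cref{4.7} gives $\aq_3=0$, and \cref{4.6} collapses to $\aq_4\cong\bigwedge^2\hh_1$, whence $\aq_4=0$ forces $\mu_S(\hh_1)\le 1$ by Nakayama. Your opening ``main lemma'' that $\hh_1$ is cyclic for an almost complete intersection is false in general (it is a canonical-type module, typically not cyclic), but you catch this yourself mid-argument.

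The genuine gap is the converse half of (b), which you yourself flag as ``the main obstacle'': one must show that a genuine almost complete intersection with $\mu_S(\hh_1)=1$ has $\aq_5\neq 0$, and your proposal contains no mechanism for doing so. Two concrete problems. First, your application of \cref{4.5} is misindexed: $\aq_5\cong\hh_4(F_3X)$, not $\hh_3(F_2X)$, so the relevant computation lives in homological degree $4$ of $F_2X$; the sequence you propose to chase, involving $\hh_3(F_2X/F_1X)$ via property \ref{item_resolvent_e} in degree $2i-1=3$, only recovers information about $\aq_4$. Second, and more seriously, the step ``injectivity of the Koszul-relation maps propagates back to a free summand in $\hh_1$'' is precisely the content that has to be proved, and it does not follow formally from anything you set up. The paper supplies the missing construction: writing $F_2X=F_1X\langle T\mid \delta T=z\rangle$, one identifies $\hh_2(F_2X)\cong\mathrm{Ann}_S\hh_1$ via $x\mapsto[\tilde xT-a_x]$; then for each $x\in\mathrm{Ann}_S\hh_1$ one builds an explicit degree-$4$ cycle $w_x=xT^2-a_xT-b_x$ (using $\hh_3=0$ to find $b_x$), shows $[w_x]\notin\hh_2(F_2X)^2$ whenever $x\notin(\mathrm{Ann}\hh_1)^2$, and identifies $\aq_5\cong\hh_4(F_2X)/\hh_2(F_2X)^2$ via the connecting map out of $\hh_5(F_3X/F_2X)$. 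Since $\hh_1$ is not free (Gulliksen), $\mathrm{Ann}_S\hh_1$ is a nonzero proper ideal, so $(\mathrm{Ann}\hh_1)^2\neq\mathrm{Ann}\hh_1$ by Nakayama, and hence $\aq_5\neq0$. Without this (or an equivalent) explicit degree-$4$ cycle computation, the proof of (b) is incomplete.
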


\begin{proof}
    Let again $\hh$  denote the homology on a minimal number of generators of $I$. Since $I$ is an almost complete intersection we have $\hh_i=0$ for $i>1$. By \ref{4.7} we get $\aq_3(S/R,S)=0$ and \ref{4.6} shows that $\aq_4(S/R,S)\cong \wedge^2 \hh_1$. 
    
    Suppose now that $\aq_4(S/R,S)=0$. Then necessarily $\mu(\hh_1)\le 1$. If $\hh_1=0$, then $I$ is a complete intersection. It remains therefore to consider the case where $\mu(\hh_1)=1$. Say the cycle $z$ generates $\hh_1$ and $F_2X=F_1X\langle T\mid \delta T=z\rangle$. We get the exact sequence
    \[
    0\longrightarrow \hh_2(F_2X) \longrightarrow ST \longrightarrow \hh_1\longrightarrow 0\,.
    \]
    $\hh_1$ cannot be free, since otherwise by \cite[Prop.\ 1.4.9]{Gulliksen/Levin:1969} $I$ would be a complete intersection and $\hh_1=0$. For each $x\in \mathrm{Ann} \hh_1$ let $z_x = \tilde{x} T - a_x$, where $\tilde{x}$ is a representation of $x$ in $R$ and $a_x\in F_1X$ is chosen such that $\delta a_x=\tilde{x}z$. It is then clear that the mapping
    \[
    \mathrm{Ann}\hh_1 \longrightarrow \hh_2(F_2X) \quad{\text{where}} \quad x\mapsto z_x
    \]
    is an isomorphism.  
    
    For any $x\in \mathrm{Ann}\hh_1$ we have $\delta(\tilde{x}T^2 - a_xT)= -a_xz$. Thus $a_xz$ is a cycle in $F_1X$ and since $\hh_3=0$, we can find $b_x\in F_1X$ such that $\delta b_x=a_xz$.

    For all $x\in \mathrm{Ann}\hh_1$ we have therefore constructed a cycle
    \[
    w_x = xT^2 - a_xT -b_x \in F_2X\,.
    \]
    Consider the inclusion 
    \[
    \hh_2(F_2X)^2\subseteq \hh_4(F_2X)\,.
    \]
    $\hh_2(F_2X)^2$ is generated by $[z_xz_y]$, where $x,y\in \mathrm{Ann}\hh_1$. It follows that $[w_x]\in \hh_4(F_2X)$ but $[w_x]\not\in \hh_2(F_2X)^2$ if $x\not\in (\mathrm{Ann}\hh_1)^2$. Since $(\mathrm{Ann}\hh_1)^2\ne \mathrm{Ann}\hh_1$, we conclude that 
    \[
    \hh_4(F_2X)/\hh_2(F_2X)^2\ne 0\,.
    \]
    We have the exact sequence
    \[
    \hh_5(F_3X/F_2X)\xrightarrow{\ \alpha\ } \hh_4(F_2X)\longrightarrow \hh_4(F_3X)\longrightarrow 0\,,
    \]
    which allows us to compute $\aq_5(S/R,S)$: From \ref{item_resolvent_e} (after \ref{4.3}) it follows that $\im\alpha = \hh_2(F_2X)^2$. Using Corollary \ref{4.5} we now find
    \[
    \pushQED{\qedhere}
    \aq_5(S/R,S)\cong \hh_4(F_3X) \cong \hh_4(F_2X)/\im \alpha \cong  \hh_4(F_2X)/\hh_2(F_2X)^2\ne 0\,.\qed
    \]
\end{proof}

There is another instance where we can say something about the $\aq_i$.

\begin{theorem}
\label{4.15}
Let $R$ be regular and $I\subseteq R$ an ideal such that each $R/I$-module has rational Poincar\'e series. The following conditions are equivalent:
\begin{enumerate}[\quad\rm(a)]
\item 
    $S=R/I$ is a complete intersection;
\item 
    $\aq_i(S/R,S)=0$ for $i>1$;
\item 
    There exists an integer $i_0>1$ such that $\aq_i(S/R,S)=0$ for $i\ge i_0$.
\end{enumerate}
\end{theorem}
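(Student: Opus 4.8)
The implications (a) $\Rightarrow$ (b) $\Rightarrow$ (c) are the easy half. For (a) $\Rightarrow$ (b): if $I=(x_1,\dots,x_g)$ with $\boldsymbol{x}$ a regular sequence, use the Koszul complex $X=R\langle T_1,\dots,T_g\mid\delta T_j=x_j\rangle$ as a resolvent of $\varphi\colon R\to S$. Then $\Omega_{X/R}$ is $X$-free on $dT_1,\dots,dT_g$ in degree $1$, and $\delta(dT_j)=d(x_j)=0$, so $\Omega_{X/R}\otimes_X S\cong S^g$ is concentrated in degree $1$ and $\aq_i(S/R,S)=0$ for $i\geq 2$. The implication (b) $\Rightarrow$ (c) is trivial. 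It therefore suffices to prove (c) $\Rightarrow$ (a), since then (c) $\Rightarrow$ (a) $\Rightarrow$ (b) $\Rightarrow$ (c) closes the circle.

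For (c) $\Rightarrow$ (a) the plan is to transport the vanishing of the $\aq_i$ to the minimal resolvent and read off the deviations of $S$. Choose a minimal resolvent $X$ of $\varphi$. By \cref{4.3} and the discussion preceding it, $L\coloneqq\Omega_{X/R}\otimes_X S$ is a complex of finite free $S$-modules with $\delta(L)\subseteq\fm L$ and $\rank_S L_i$ equal to the number $e_i$ of degree-$i$ variables of $X$; since $R$ is regular, $e_i=\varepsilon_i(S)$, the $i$-th deviation of $S$. Moreover $\hh_i(L)\cong\aq_i(S/R,S)$ for $i\geq 2$. Assume $\aq_i(S/R,S)=0$ for $i\geq i_0$, with $i_0>1$. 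Then $L$ is acyclic in all degrees $\geq i_0$, so, setting $B\coloneqq\coker\!\left(L_{i_0+1}\to L_{i_0}\right)$, the truncation
\[
\cdots\longrightarrow L_{i_0+2}\longrightarrow L_{i_0+1}\longrightarrow L_{i_0}\longrightarrow B\longrightarrow 0
\]
is a free resolution of the $S$-module $B$, minimal because $\delta(L)\subseteq\fm L$. Hence $\beta^S_n(B)=\rank_S L_{i_0+n}=\varepsilon_{i_0+n}(S)$ for all $n\geq 0$, so that
\[
\poin^S_B(t)=\sum_{n\geq 0}\varepsilon_{i_0+n}(S)\,t^n .
\]
By the standing hypothesis $\poin^S_B(t)$ is rational, and therefore so is the complete deviation series $\sum_{i\geq 0}\varepsilon_i(S)\,t^i$, which differs from $t^{i_0}\poin^S_B(t)$ only by a polynomial. (If $\pdim_S B$ is finite this series is a polynomial, i.e.\ $\varepsilon_i(S)=0$ for $i\gg 0$.)

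It remains to deduce that $S$ is a complete intersection, and this is the crux. Since $\mathbb{Q}\subseteq R$ and $R$ is regular, one argues through the homotopy Lie algebra $\pi^{*}(S)$, whose graded dimensions are the $\varepsilon_i(S)$: if $S$ were not a complete intersection then $\pi^{\geq 2}(S)$ would contain a free graded Lie subalgebra on at least two generators, forcing the $\varepsilon_i(S)$ to grow at least like the graded dimensions of such an algebra, which by the Witt formula are asymptotic to $\rho^{i}/i$ for some real $\rho>1$. A sequence of that growth produces a logarithmic singularity on the circle of convergence of $\sum_i\varepsilon_i(S)\,t^i$, so this power series cannot be rational — contradicting the previous paragraph. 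Hence $S$ is a complete intersection, and (c) $\Rightarrow$ (a) follows.

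The reduction in the middle paragraph is formal once \cref{4.3} and the identification $e_i=\varepsilon_i(S)$ are available; the real content — and the main obstacle — is the last step, which is essentially a case of Quillen's conjecture and is made accessible only by the hypotheses that $R$ is regular, that $\mathbb{Q}\subseteq R$, and that every $S$-module has rational Poincar\'e series (the last being what guarantees that $\poin^S_B(t)$, and not merely $\poin^S_k(t)$, is rational). One expects the published argument to invoke the structure theory of the homotopy Lie algebra of a non-complete-intersection local ring for this step.
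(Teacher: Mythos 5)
Your reduction of (c) to the rationality of the deviation series is essentially the paper's: the two easy implications, the passage to a minimal resolvent, the identification of $\rank_S L_i$ with a deviation of $S$ (your indexing is off by one --- with the paper's convention $\varepsilon_2(S)=\mu(I)$ one has $\rank_S L_i=\varepsilon_{i+1}$, but this is harmless), and the observation that the truncation of $L$ is a minimal free resolution of $M=\coker(L_{i_0}\to L_{i_0-1})$, so that $\sum_{i\geq i_0}\varepsilon_i t^{\,i-i_0}=\poin_M(t)$ is rational by the standing hypothesis. Up to that point you are on track and faithful to the paper.

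The final step, however, has a genuine gap. You claim that if $S$ is not a complete intersection then $\sum_i\varepsilon_i(S)t^i$ cannot be rational, deducing this from the existence of a free graded Lie subalgebra of $\pi^{\geq 2}(S)$ on two generators and the resulting growth $\varepsilon_i\gtrsim\rho^i/i$. But a termwise \emph{lower bound} of that shape does not preclude rationality: a rational series whose coefficients grow like $c\sigma^i$ with $\sigma>\rho$ dominates $\rho^i/i$, so no logarithmic singularity is forced on $\sum_i\varepsilon_i t^i$ itself --- only on the generating series of the free Lie subalgebra, which merely bounds it from below. Indeed, whether a non-complete-intersection local ring can have rational deviation series $E^S_k(t)$ is recorded as an \emph{open question} in the appendix to this paper; your argument, if it worked, would settle it. The paper's proof is genuinely different at this point: it uses the rationality of \emph{both} $F(z)=\sum_i\varepsilon_iz^i$ and $\poin_k(z)$. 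From the product formula $\poin_k(z)=\prod_j(1+z^{2j+1})^{\varepsilon_{2j+1}}\big/\prod_j(1-z^{2j})^{\varepsilon_{2j}}$ one takes the logarithmic derivative at $-z$, subtracts $F'(-z)$ and the $\varepsilon_1$-term, and obtains a rational series whose $i$-th coefficient is $\sum_{j\mid i,\ j\neq 1,i}(-1)^{j+1}j\varepsilon_j$; since $\varepsilon_j>0$ for $j\geq i_0$, this coefficient vanishes for large odd $i$ exactly when $i$ is prime, contradicting Mahler's theorem that the zero set of the coefficients of a rational series is eventually periodic. You need this interplay with $\poin_k(z)$ (or some substitute that actually uses its rationality) to close the argument; the homotopy Lie algebra growth estimate alone does not suffice.
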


Recall that the numbers 
\[
\beta_i(M)=\rank_k \tor^R_i(k,M)
\]
are called the \emph{Betti numbers} of $M$ and that
\[
\poin_M(z) = \sum_{i\geqslant 0} \beta_i(M)z^i
\]
is called the \emph{Poincar\'e series} of $M$.

It has been proved in many cases that $\poin_M(z)$ is a rational function; for instance, for Golod rings. But just recently there was found an example of a local ring for which $\poin_k(z)$ is not rational; see \cite{Anick:1980} and also \cite{Lofwall/Roos:1980}.

\begin{proof}[Proof of the theorem]
If $I$ is generated by a regular sequence $a_1,\dots,a_n$, then the Koszul complex 
\[
R\langle T_1,\dots,T_n\mid \delta(T_i)=a_i\rangle
\]
is a resolvent of $R\to S$ and therefore $L= 0\to \oplus ST_i\to 0$. This proves (a)$\Rightarrow$(b).

The implication (b)$\Rightarrow$(c) is trivial.

Let us prove (c)$\Rightarrow$(a): We choose a minimal resolvent $X$ of $R\to S$, then $\delta(L)\subseteq \fm L$, and therefore 
\[
\rank_R L_i = \rank_k \hh_i(L\otimes_Sk) =\rank_k \aq_i(S/R,k)\,.
\]
We now compute these numbers in a different way: To the sequence of ring homomorphisms
\[
R\longrightarrow S\longrightarrow k
\]
belongs the Zariski-sequence 
\[
\cdots \longrightarrow \aq_i(S/R,k) \longrightarrow \aq_i(k/R,k)\longrightarrow \aq_i(k/S,k)\longrightarrow\cdots
\]
We have $\aq_i(k/R,k)=0$ for $i>0$, since $R$ is regular. Therefore
\[
\aq_i(S/R,k)\cong \aq_{i+1}(k/S,k) \quad \text{for $i\ge 1$.}
\]
To compute $\aq_i(k/S,k)$ we choose a minimal resolvent $Y$ for $S\to k$, which by a theorem of T.~H.~Gulliksen~\cite{Gulliksen:1971}\footnote{See also Schoeller \citerecent{Schoeller:1967}.} is a minimal free $S$-resolution of $k$. 

The numbers
\[
\varepsilon_i =\varepsilon_i(S) = \rank_k (F_{i}Y/F_{i-1}Y)_{i}\otimes_S k
\]
are called the \emph{deviations} of $S$\footnote{The original manuscript used a different convention, with the $i$th deviation being $\rank_k (F_{i+1}Y/F_{i}Y)_{i+1}\otimes_S k$. We have adopted what is by now the mostly standard convention, in which $\varepsilon_1(S)$ is the embedding dimension of $S$, $\varepsilon_2(S)$ is the minimal number of generators of $I$, et cetera.}. Thus we obtain the following formula:
\[
\rank_S L_i = \varepsilon_{i+1}
\]
for $i>0$. Assume now that there exists an integer $i_0>1$ such that
\[
\aq_i(S/R,S)=0 \quad \text{for $i\ge i_0$.}
\]
Let $M=\mathrm{coker}(L_{i_0}\to L_{i_0-1})$. Then 
\begin{equation}
\label{eq:resM}
\cdots \longrightarrow L_{i_0}\longrightarrow L_{i_0-1} \longrightarrow M\longrightarrow 0    
\end{equation}
is a minimal free $S$-resolution of $M$ with
\[
\poin_M(z) = \sum_{i\geq i_0} \varepsilon_i z^{i-i_0}\,.
\]
We may assume that $\varepsilon_i>0$ for $i\ge i_0$, since if some $\varepsilon_i=0$ for $i\ge i_0$, then all $\varepsilon_j=0$ for $j\ge i$ because \eqref{eq:resM} is a minimal free resolution of $M$. However the vanishing of $\varepsilon_j$ for $j\ge i$ implies that $I$ is a complete intersection by \cite{Gulliksen:1971}. 

Now let  $F(z)=\sum_{i\geqslant 1}\varepsilon_iz^{i}$. By assumption $\poin_M(z)$ is rational, hence also $F(z)$ is rational. We also have that $\poin_k(z)$ is rational. On the other hand we show that $F(z)$ and $\poin_k(z)$ cannot be rational at the same time!

Since $Y$ is a minimal resolvent of $S\to k$ it is easy to see that
\[
\poin_k(z) = \frac{(1+z)^{\varepsilon_1}(1+z^3)^{\varepsilon_3}\cdots}{(1-z^2)^{\varepsilon_2}(1-z^4)^{\varepsilon_4}\cdots}
\]
Let $\lambda$ denote the logarithmic derivation, that is to say, $\lambda G=G'/G$. We have
\begin{align*}
(\lambda \poin_k)(-z) 
    & = \sum_{i=0}^\infty \frac{(-1)^{i+1}iz^{i-1}\varepsilon_i}{1-z^{i}} \\
    &= \sum_{i=0}^\infty \Big((-1)^{i+1}i\varepsilon_i \sum_{j=0}^\infty z^{ij+i-1} \Big)\\
    &= \sum_{i=0}^\infty \Big( \sum_{j|i+1} (-1)^{j+1 }j\varepsilon_{j}\Big)z^i \,.
\end{align*}
Let $\sum_{i=1}^\infty \alpha_iz^i = \big[(\lambda \poin_k)(-z) - F'(-z) - \sum_{i=0}^\infty \varepsilon_1z^i\big]z$. Then
$\sum_{i=0}^\infty \alpha_iz^i$ is a rational series and
\[
\alpha_i = \sum_{\genfrac{}{}{0pt}{}{j|i}{j\ne 1,i}} (-1)^{j+1} j\varepsilon_{j}\,.
\]
Since the $\varepsilon_i> 0$ for $i\ge i_0$, we find that for $i>i_0^2$ and $i$ odd:
\[
\alpha_i =
\begin{cases}
0 &\text{if $i$ is prime} \\
>0 & \text{if $i$ is not prime}.
\end{cases}
\]
Such a strange rational function doesn't exist by the following result.
\end{proof}

\begin{theorem}[Mahler~\cite{Mahler:2019}]
\label{th:mahler}
Let $\sum_{i=0}\alpha_iz^i$ be rational with $\alpha_i\in\mathbb Q$. There exists an integer $r>0$ and a subset $\{r_1,\dots,r_g\}\subseteq \{0,\dots,r-1\}$ such that for $i\gg 0$ one has $\alpha_i=0$ if and only if $i\equiv r_j\mod r$ for some $j\in\{1,\dots,g\}$. \qed 
\end{theorem}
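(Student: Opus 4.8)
The plan is to deduce this from the Skolem--Mahler--Lech theorem on the zero sets of linear recurrence sequences. First I would note that rationality of $\sum_{i\ge 0}\alpha_i z^i$ is equivalent to saying that $(\alpha_i)_{i\ge 0}$ satisfies a linear recurrence with constant coefficients in $\mathbb Q$. Granting the Skolem--Mahler--Lech theorem --- that the zero set $Z=\{i:\alpha_i=0\}$ is the union of a finite set with finitely many infinite arithmetic progressions --- the theorem follows by letting $r$ be the least common multiple of the common differences of those progressions: once the bounded finite exceptional set is discarded, $i\in Z$ holds if and only if $i$ lies in one of the residue classes modulo $r$ swept out by the progressions, and those residue classes are the desired $r_1,\dots,r_g$ (with $g=0$ allowed, meaning $\alpha_i\ne 0$ for all large $i$).

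So the real content is a proof of Skolem--Mahler--Lech, for which I would follow Mahler's $p$-adic argument. Factor the denominator of $\sum\alpha_i z^i$ over a number field $K$ (taken Galois over $\mathbb Q$, say) containing all of its roots, and write $\alpha_i=\sum_{k=1}^s P_k(i)\gamma_k^{\,i}$ for $i$ large, with $\gamma_k\in K^\times$ and $P_k\in K[x]$. I would then choose a rational prime $p>2$, unramified in $K$, such that each $\gamma_k$ is a unit at every prime $\fp$ of $K$ above $p$; only finitely many primes $p$ are excluded by these requirements. Since each residue field $\mathcal O_\fp/\fp$ is finite, there is a positive integer $N$ with $\gamma_k^{\,N}\in 1+\fp\mathcal O_\fp$ for all $k$ and all $\fp\mid p$; because $p>2$ and $p$ is unramified, these elements are close enough to $1$ that the $p$-adic logarithm and exponential are defined on them.

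Next I would fix a residue class $c\in\{0,\dots,N-1\}$ and substitute $i=c+Nj$, so that
\[
\alpha_{c+Nj}=\sum_{k=1}^s P_k(c+Nj)\,\gamma_k^{\,c}\,(\gamma_k^{\,N})^{\,j}.
\]
Each factor $(\gamma_k^{\,N})^{\,j}=\exp\!\big(j\log\gamma_k^{\,N}\big)$ is a $p$-adic analytic function of $j\in\mathbb Z_p$ valued in a fixed finite extension $L$ of $\mathbb Q_p$, and $P_k(c+Nj)$ is a polynomial in $j$; hence $j\mapsto\alpha_{c+Nj}$ is represented by a power series in $j$ over $L$ that converges on all of $\mathbb Z_p$. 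By Strassmann's theorem (equivalently, Weierstrass preparation in the Tate algebra over $L$) this function either vanishes identically --- in which case the whole progression $c+N\mathbb Z_{\ge 0}$ lies in $Z$ --- or has only finitely many zeros in $\mathbb Z_p$, hence finitely many nonnegative integer zeros. Letting $c$ range over $\{0,\dots,N-1\}$ exhibits $Z$ as a union of finitely many full arithmetic progressions with common difference $N$ together with a finite set, which is exactly Skolem--Mahler--Lech (and one may take $r=N$).

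I expect the main obstacle to be the $p$-adic analyticity step: one must arrange, by passing from $\gamma_k$ to the power $\gamma_k^{\,N}$, that the roots lie near enough to $1$ for $\log$ and $\exp$ to converge and for the resulting series in $j$ to converge on all of $\mathbb Z_p$ --- this is precisely why the $\gamma_k$ must be chosen to be $p$-adic units and why $p=2$ must be excluded --- and then to invoke Strassmann's finiteness theorem correctly over the (possibly ramified) field $L$. The remaining ingredients --- translating rationality into a recurrence, the exponential--polynomial form of its solutions, and taking the least common multiple of the moduli at the end --- are routine.
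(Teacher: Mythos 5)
The paper states this result without proof, citing Mahler, and your argument is precisely Mahler's: the $p$-adic proof of the Skolem--Mahler--Lech theorem via the exponential--polynomial representation of the coefficients, a prime $p$ at which all the roots $\gamma_k$ are units, passage to the power $N$ so that $\gamma_k^N\in 1+\fp$, and Strassmann's theorem applied on each residue class modulo $N$. The sketch is correct; the only small imprecision is that, having chosen $p$ unramified in $K$, the completion $L$ is in fact unramified over $\mathbb Q_p$, which is exactly what guarantees $v(\log\gamma_k^N)\ge 1>1/(p-1)$ and hence the convergence of $\exp(j\log\gamma_k^N)$ on all of $\mathbb Z_p$ that your argument needs.
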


Thus if our series were rational there would exist too many primes. Because of the results \eqref{4.11}, \eqref{4.14} and \eqref{4.15} we are led to conjecture the following:
\begin{conjecture}[C3]
    \label{C3}
    Let $R$ be a regular local ring and $S=R/I$. The following conditions are equivalent:
    \begin{enumerate}[\quad\rm(a)]
        \item 
        $I$ is a complete intersection;
        \item 
    $\aq_i(S/R,S)=0$ for $i>1$;
    \item 
        There exists an integer $i_0>1$ such that $\aq_i(S/R,S)=0$ for $i\ge i_0$.
    \end{enumerate}
\end{conjecture}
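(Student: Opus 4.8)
The plan is to dispatch (a) $\Rightarrow$ (b) $\Rightarrow$ (c) exactly as in the proof of \cref{4.15}: if $I$ is generated by a regular sequence $a_1,\dots,a_n$, then $R\langle T_1,\dots,T_n\mid \delta T_i=a_i\rangle$ is already a resolvent of $R\to S$, so the complex $L$ of \cref{4.3} equals $0\to\bigoplus_i ST_i\to 0$ and $\aq_i(S/R,S)=0$ for $i>1$, while (b) $\Rightarrow$ (c) is trivial. This leaves (c) $\Rightarrow$ (a), and for that I would run the argument of \cref{4.15} as far as it goes before rationality of Poincar\'e series enters, and then search for an unconditional replacement for that step.

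Concretely, first choose a minimal resolvent $X$ of $R\to S$, so that $\delta(L)\subseteq\fm L$ and $\rank_S L_i=\dim_k\aq_i(S/R,k)$. Regularity of $R$ forces $\aq_i(k/R,k)=0$ for $i>0$, so the Zariski sequence for $R\to S\to k$ gives $\aq_i(S/R,k)\cong\aq_{i+1}(k/S,k)$ for $i\ge 1$; evaluating the latter on a minimal resolvent of $S\to k$ --- which by Gulliksen is the minimal free $S$-resolution of $k$ --- identifies $\rank_S L_i$ with the deviation $\varepsilon_{i+1}(S)$. Assuming now that $\aq_i(S/R,S)=0$ for all $i\ge i_0$ with $i_0>1$ and that $S$ is \emph{not} a complete intersection, no $\varepsilon_i$ with $i\ge i_0$ can vanish (otherwise $I$ is a complete intersection by \cite{Gulliksen:1971}), so the truncation $\cdots\to L_{i_0}\to L_{i_0-1}$ is a minimal free $S$-resolution of $M\coloneqq\coker(L_{i_0}\to L_{i_0-1})$ and $\poin_M(z)=\sum_{i\ge i_0}\varepsilon_i z^{i-i_0}$. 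This is exactly the point at which \cref{4.15} assumes $\poin_M(z)$ and $\poin_k(z)$ rational, expands the logarithmic derivative of $\poin_k(z)=\prod_{j\ge 1}(1+z^{2j-1})^{\varepsilon_{2j-1}}/\prod_{j\ge 1}(1-z^{2j})^{\varepsilon_{2j}}$, and invokes \cref{th:mahler} to reach the absurdity that the odd primes are eventually periodic.

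The hard part is that, without the rationality hypothesis, \cref{th:mahler} has nothing to act on and this combinatorial contradiction collapses; that is why Conjecture (C3) is still open. The route I would take instead is to recognize the hypothesis $\aq_i(S/R,S)=0$ for $i\ge i_0$ as an instance of Quillen's rigidity conjecture for cotangent homology: since $R$ is regular, $S$ has finite flat dimension over $R$, so one expects vanishing of $\aq_i(S/R,S)$ for $i\gg 0$ to force $\aq_i(S/R,S)=0$ for \emph{all} $i\ge 2$ --- equivalently, the cotangent complex of $R\to S$ to be concentrated in degrees $\le 1$ --- and hence $R\to S$ to be a complete intersection homomorphism, i.e.\ $I$ generated by a regular sequence. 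Establishing this rigidity of the $\aq_i$ is where a genuinely new idea is needed: enlarging the class of rings over which every module has a rational Poincar\'e series would extend the reach of the \cref{4.15} argument, but the conjecture in full seems to require an argument that does not pass through Poincar\'e series at all.
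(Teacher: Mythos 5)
The statement you were given is labelled as a conjecture, and the paper contains no proof of it: (C3) is formulated at the end of \cref{se:cotangent} precisely because \cref{4.11}, \cref{4.14} and \cref{4.15} provide evidence for it, and \cref{se:recent} records that it remains wide open, having been settled only in special cases (e.g.\ by Ulrich for ideals in the linkage class of a complete intersection). Your proposal correctly recognizes this, and your recapitulation of the one substantial partial result --- \cref{4.15}, which proves (C3) under the additional hypothesis that every finitely generated $S$-module has rational Poincar\'e series, via the identification $\rank_S L_i=\varepsilon_{i+1}(S)$, the logarithmic derivative of $\poin_k(z)$, and \cref{th:mahler} --- is accurate and follows the paper's argument step for step, including the correct reduction to the case $\varepsilon_i>0$ for $i\ge i_0$. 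One clarification on your closing paragraph: the hypothesis $\aq_i(S/R,S)=0$ for $i\ge i_0$ is \emph{not} an instance of Quillen's rigidity conjecture but a strictly weaker hypothesis, since Quillen's conjecture (now Avramov's theorem; see the discussion around \cref{co:rigid}) assumes the vanishing of $\aq_i(S/R,-)$ as a functor, that is, with arbitrary coefficient modules, for $i\gg 0$. A proof of (C3) would therefore be a genuine strengthening of the Quillen--Avramov theorem rather than a consequence of it, which is exactly why it is still open; so the ``new idea'' you call for cannot simply be the known rigidity results for the cotangent complex.
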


\appendix

\section{Recent developments}
\label{se:recent}
In this section we (Briggs and Iyengar) record the status of the conjectures discussed in the preceding sections. The main  development is that Conjecture (C1) has been proved in \citerecent[Theorem A]{Briggs:2022} in full generality. In fact the following more general statement is established in \emph{op.~cit.}

\begin{theorem}{\citerecent[Theorem 3.1]{Briggs:2022}}
\label{th:briggs}
Let $R\to S=R/I$ be a surjective homomorphism of  noetherian local rings with $\pdim_RS$ finite.  If there exists an $S$-module $N$ and an $S$-linear map $\alpha\colon I/I^2\to N$ such that $\alpha\otimes_Sk$ is one-to-one, where $k$ is the residue field of $S$, then $I$ is generated by a regular sequence. 
\end{theorem}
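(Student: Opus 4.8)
The plan is to reduce the statement to a boundedness assertion for the relative cotangent complex and then to extract that boundedness from the map $\alpha$; the latter is the crux. To begin, make the usual harmless reductions: passing to the completion changes neither $\pdim_R S$, nor the conormal module $I/I^2$, nor the residue field $k$, nor the property of $I$ being generated by a regular sequence, and the hypothesis on $\alpha$ survives the faithfully flat base change $-\otimes_S\widehat S$; so assume $R$ is complete, and then, by a further flat local extension, that $k$ is infinite. In particular $R$ is a quotient of a regular local ring, so the resolvent technology of \cref{se:cotangent} is available.

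Next, reformulate the goal. The conclusion says precisely that $R\to S$ is a complete intersection homomorphism. Since $\pdim_R S<\infty$, the ring $S$ has finite flat dimension over $R$, and in that situation Avramov's theorem resolving Quillen's conjecture on the vanishing of cotangent homology shows that $R\to S$ is a complete intersection homomorphism as soon as $\aq_n(S/R,k)=0$ for all $n\gg 0$ --- equivalently, as soon as $\mathbf L_{S/R}$ has bounded homology. (By \cref{4.3,lem_4.4,4.5} one could instead aim at $\aq_n(S/R,S)=0$ for all $n\ge 2$, but the residue-field version is the cleaner target.) So it suffices to prove: under our hypotheses, $\aq_n(S/R,k)=0$ for $n\gg 0$.

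The crux is to deduce this vanishing from the existence of $\alpha\colon I/I^2\to N$ with $\pdim_S N<\infty$ and $\alpha\otimes_S k$ injective. Fix a minimal resolvent $X$ of $R\to S$; by \cref{4.3} the complex $L=\Omega_{X/R}\otimes_X S$ is minimal, has $\hh_1(L)=I/I^2$ and $\hh_n(L)=\aq_n(S/R,S)$ for $n\ge 1$, and is a minimal free $S$-resolution of $I/I^2$ exactly when the higher $\aq_n(S/R,S)$ vanish. Injectivity of $\alpha\otimes_S k$ means that a minimal system of generators of $I$ is carried by $\alpha$ to part of a minimal system of generators of $N$; choosing a finite free $S$-resolution $P$ of $N$ (available since $\pdim_S N<\infty$) one lifts $\alpha$ to a morphism from a homological shift of $L$ into $P$ that is split injective in its lowest degree, and the aim is to show that minimality of $L$ together with finiteness of $P$ forces $L$ itself to be bounded. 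The main obstacle is to prove that this degree-one injectivity \emph{propagates} to all higher degrees: using the divided-power structure of $X$ and the identification $\aq_{n+1}(S/R,S)\cong\hh_n(F_{n-1}X)$ of \cref{4.5}, one has to follow the successive obstruction classes in the homotopy Lie algebra of $S/R$ and verify that they are all annihilated. A possibly more tractable reformulation of the same point: $\alpha$ is a class in $\aq^1(S/R,N)=\Hom_S(I/I^2,N)$, so it classifies a square-zero extension $0\to N\to\widetilde S\to S\to 0$ of $R$-algebras on which ``$\alpha\otimes_S k$ injective'' becomes visible, and one plays the Jacobi--Zariski sequence of $R\to\widetilde S\to S$ off against $\pdim_R S<\infty$ and $\pdim_S N<\infty$; the genuine difficulty, controlling $\aq_\bullet(S/\widetilde S,-)$ in high degrees, is unchanged.

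Once $\aq_n(S/R,k)=0$ for $n\gg 0$ has been established, Avramov's theorem yields that $I$ is generated by a regular sequence, proving the theorem. (Taking $N=I/I^2$ and $\alpha=\idmap$ recovers Conjecture~(C1).)
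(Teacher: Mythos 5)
There is a genuine gap, and it sits exactly where you flag it. Your argument reduces the theorem to the claim that the hypotheses force $\aq_n(S/R,k)=0$ for all $n\gg 0$, so that Avramov's resolution of Quillen's conjecture applies; but that claim is never established. The two passages devoted to ``the crux'' describe what would have to be done --- ``follow the successive obstruction classes \dots and verify that they are all annihilated'', or ``play the Jacobi--Zariski sequence of $R\to\widetilde S\to S$ off against'' the finiteness hypotheses --- without supplying any mechanism by which the degree-one datum $\alpha\colon I/I^2\to N$ (a statement about $\aq_1(S/R,S)$ alone, by \cref{4.3}) controls $\aq_n$ for all large $n$. Worse, the intermediate target is not genuinely weaker than the conclusion: since a regular sequence makes the cotangent complex quasi-isomorphic to the free module $I/I^2$ in degree one, Avramov's theorem together with this easy converse shows that, for a surjection of finite projective dimension, ``$\aq_n(S/R,k)=0$ for $n\gg 0$'' is \emph{equivalent} to ``$I$ is generated by a regular sequence''. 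So the reduction carries no content and the entire difficulty of the theorem remains in the unexecuted step. (A smaller point: as printed in the appendix the statement omits the hypothesis that $N$ have finite projective dimension, without which it is false --- take $N=I/I^2$ and $\alpha=\idmap$; you correctly reinstate it, and your use of a finite free resolution $P$ of $N$ depends on it.)

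For context: this theorem is not proved in the present notes but quoted from Briggs's paper, and the argument there does not pass through boundedness of Andr\'e--Quillen homology at all. It works with the Koszul complex on a minimal generating set of $I$ and the homotopy Lie algebra $\pi(\varphi)$ of the surjection; the map $\alpha$ and the finiteness of $\pdim_SN$ are used to detect all of $\pi^2(\varphi)$ by a module of finite projective dimension, and this is played off against the theorem of F\'elix--Halperin--Thomas and Avramov that $\pi^{\geqslant 2}(\varphi)$ contains a free graded Lie subalgebra on two generators whenever $\pdim_RS$ is finite and $I$ is not a complete intersection. If you want to salvage your outline, the realistic move is to replace ``bound the cotangent homology'' by ``contradict the existence of such a free Lie subalgebra'', since that is a target which degree-two information about $\pi(\varphi)$ can actually reach.
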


The added generality in the preceding statement is useful for it opens up a path towards proving  Conjecture (C2) as well. Indeed, given the theorem above, (C2) follows from a conjecture of Eisenbud and Mazur~\citerecent{Eisenbud/Mazur:1997} concerning evolutions. An equivalent formulation of their conjecture,  due to Lenstra, is that if $S$ is a local affine $k$-algebra, as in (C2), then in the Jacobi-Zariski sequence
\[
I/I^2\xrightarrow{\ \alpha \ } S\otimes_R \Omega_{R/k}\xrightarrow{\ \beta\ } \Omega_{S/k}\longrightarrow 0
\]
the induced surjective map $\alpha\colon I/I^2\to \ker(\beta)$ is such that $\alpha\otimes_Sk$ is an isomorphism; see~\citerecent[Proposition~1]{Eisenbud/Mazur:1997}. If this condition holds, then when $\Omega_{S/k}$ has finite projective dimension over $S$, so does $\ker(\beta)$, and hence we can apply Theorem~\ref{th:briggs} with $N=\ker(\beta)$ to deduce that $I$ is generated by a regular sequence. See also \citerecent[Theorem 3.4]{Briggs:2022}.

\subsection*{Higher cotangent modules}
The module of K\"ahler differentials and the conormal modules are the first two modules in a family of \emph{cotangent modules} introduced by Avramov and Herzog~\citerecent{Avramov/Herzog:1994} when $R$ is a field of characteristic $0$ and $S$ is a positively graded $R$-algebra, and extended to general maps in \citerecent{Briggs/Iyengar:2023}.  Namely, given a homomorphism of commutative noetherian rings $\varphi \colon R\to S$, and for $L$ the cotangent complex of $\varphi$, set
\[
\cotam n{\varphi}\coloneqq \mathrm{Coker}(d_{n+1})\,.
\]
This $S$-module is well-defined up to projective summands; see \citerecent[\S7.3]{Briggs/Iyengar:2023}. One has $\cotam 0{\varphi}=\Omega_{S\mid R}$ and when $S=R/I$, with $\varphi$ the natural surjection $R\to S$, then $\cotam 1{\varphi}=I/I^2$. Thus the next result, from \citerecent[Theorem~B]{Briggs/Iyengar:2023}, generalizes Theorem~\ref{th:briggs}.

\begin{theorem}
\label{th:rigid}
Let $\varphi\colon R\to S$ be a map of commutative noetherian rings, essentially of finite type and locally of finite flat dimension. If for some integer  $n\ge 1$ the $S$-module $\cotam n{\varphi}$  has finite flat dimension, then $\varphi$ is locally complete intersection.
\end{theorem}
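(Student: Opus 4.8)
The plan is to reduce to the case of a surjection of finite projective dimension and then induct on $n$, using Theorem~\ref{th:briggs} for the base case $n=1$.

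\emph{Reductions.} Being a locally complete intersection map is local on $\spec S$, and both hypotheses localise, so we may assume $\varphi\colon (R,\fm)\to(S,\mathfrak n)$ is local; since $\cotam n\varphi$ is finitely generated over the noetherian ring $S$, ``finite flat dimension'' means ``finite projective dimension'' here, and likewise $\pdim_RS<\infty$. As $\varphi$ is essentially of finite type, factor it as $R\to P\to S$ with $R\to P$ essentially smooth (a localisation of a polynomial algebra) and $P\to S$ surjective, say $S=P/I$. Because $L_{P/R}$ is a projective $P$-module concentrated in homological degree $0$ and $\Omega_{S/P}=0$, the Jacobi--Zariski sequence for $R\to P\to S$ lets one choose models of $L_{S/R}$ and $L_{S/P}$ agreeing in all positive degrees; hence $\cotam n\varphi\cong\cotam n{(P\to S)}$ up to a projective summand, for every $n\ge 1$, and flat base change along $R\to P$ gives $\pdim_PS<\infty$. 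So we may assume $\varphi$ is the surjection $R\to S=R/I$ with $\pdim_RS<\infty$, and must show that $I$ is generated by a regular sequence. Fix a minimal resolvent of $\varphi$; this produces a minimal complex $L$ of finitely generated free $S$-modules representing $L_{S/R}$, with $\rank_SL_i=\varepsilon_{i+1}(\varphi)$ and $\cotam n\varphi=\coker\bigl(d_{n+1}\colon L_{n+1}\to L_n\bigr)$. Write $D_i=\hh_i(L_{S/R})$ (these are $\aq_i(S/R,S)$ when $\mathbb Q\subseteq R$).

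\emph{Base case.} For $n=1$ we have $\cotam 1\varphi=I/I^2$ of finite projective dimension over $S$, so Theorem~\ref{th:briggs} applies with $N=I/I^2$ and $\alpha=\idmap$ (then $\alpha\otimes_Sk$ is one-to-one), and $I$ is generated by a regular sequence.

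\emph{Inductive step $n\ge 2$.} Since $d_nd_{n+1}=0$, the differential $d_n$ induces a map $\overline d_n\colon\cotam n\varphi\to L_{n-1}$ with $\ker\overline d_n=D_n$ and $\coker\overline d_n=\cotam{n-1}\varphi$, whence exact sequences
\[
0\longrightarrow D_n\longrightarrow\cotam n\varphi\longrightarrow\im d_n\longrightarrow 0
\quad\text{and}\quad
0\longrightarrow\im d_n\longrightarrow L_{n-1}\longrightarrow\cotam{n-1}\varphi\longrightarrow 0 .
\]
The only obstacle to propagating finiteness of projective dimension from $\cotam n\varphi$ down to $\cotam{n-1}\varphi$ (and so to $\cotam 1\varphi$ and the base case) is the cotangent homology module $D_n$. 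This is dealt with by a rigidity input: homological finiteness of a \emph{single} higher cotangent module forces $D_i(S/R,S)=0$ for all $i>n$ --- a statement in the spirit of the $\aq_i$-computations of \cref{se:cotangent} and of Avramov's resolution of Quillen's conjecture for maps of finite flat dimension, proved by analysing the homotopy Lie algebra $\pi^{\ge 2}(\varphi)$, whose graded pieces have dimensions $\varepsilon_i(\varphi)=\rank_SL_{i-1}$. (Equivalently, one may iterate a Jacobi--Zariski comparison along the resolvent: successively replacing the base $R$ by $F_1X,\,F_2X,\,\dots,\,F_{n-1}X$ identifies $\cotam n\varphi$, up to a free summand, with $\hh_{n-1}(F_{n-1}X)$, i.e.\ with the ``first'' cotangent module of $F_{n-1}X\to S$, bringing the problem into the range of the base case.) Granted the vanishing $D_i=0$ for $i>n$, the brutal truncation $L_{\ge n}$ is precisely the minimal free $S$-resolution of $\cotam n\varphi$; its finiteness forces $\varepsilon_i(\varphi)=0$ for $i\gg 0$, so $L_{S/R}$ has bounded homology and $\varphi$ is a complete intersection by Avramov's theorem. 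The main obstacle is exactly this rigidity step --- turning homological finiteness of one $\cotam n\varphi$ into the vanishing of the higher $D_i$; everything else is bookkeeping with the resolvent and Jacobi--Zariski.
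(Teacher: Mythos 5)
The paper itself offers no proof of this theorem --- it is quoted from the cited paper of Briggs and Iyengar --- so your proposal must stand on its own. The framework is sensible: the reductions (localising, factoring through an essentially smooth $P$, discarding the degree-zero projective part of the cotangent complex, converting flat to projective dimension) are all fine; the base case $n=1$ is a legitimate application of \cref{th:briggs} with $N=I/I^2$; and the concluding truncation argument is correct, since if $\hh_i(L_{S/R})=0$ for $i>n$ then $L_{\geqslant n}$ is a minimal free resolution of $\cotam{n}{\varphi}$, so finite projective dimension forces $\varepsilon_i(\varphi)=0$ for $i\gg 0$ and one finishes with Gulliksen/Halperin/Avramov.

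The genuine gap is that the entire argument rests on the assertion that finiteness of $\pdim_S\cotam{n}{\varphi}$ forces $D_i=\hh_i(L_{S/R})=0$ for all $i>n$, which you do not prove and explicitly label ``the main obstacle.'' This is not an available input: Avramov's theorem (the resolution of Quillen's conjecture) takes the vanishing of all higher cotangent homology --- equivalently, finite flat dimension of $L_{S/R}$ --- as its \emph{hypothesis}, which is exactly what you are trying to produce, and the asserted rigidity is essentially equivalent to the theorem itself (a posteriori it holds only because $\varphi$ turns out to be complete intersection). The parenthetical alternative does not close the gap either: $F_{n-1}X\to S$ is a surjection of DG algebras, not of rings, so \cref{th:briggs} as stated does not apply to its ``first cotangent module''; extending the conormal-module rigidity to that DG setting, via the homotopy Lie algebra and the non-nilpotence arguments of Briggs, is precisely the hard content of the Briggs--Iyengar proof. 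A smaller point: even granting $D_i=0$ for $i>n$, your announced induction from $n$ to $n-1$ does not run, because the first exact sequence involves $D_n$ itself, about which the claimed vanishing (only for $i>n$) says nothing, so finiteness of projective dimension does not pass to $\im d_n$; your final truncation argument happily bypasses the induction, but the rigidity step remains unproved and is where all the difficulty lives.
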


As explained in \citerecent{Briggs/Iyengar:2023}, as a direct corollary of the preceding theorem one gets:

\begin{corollary}
\label{co:rigid}
    Let $\varphi\colon R\to S$ be a map of commutative noetherian rings that is locally of finite flat dimension. If $\aq_n(S/R,-) =0$ for some $n\ge 1$, then $\varphi$ is locally complete intersection.
\end{corollary}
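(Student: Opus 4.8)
The plan is to derive the corollary from \cref{th:rigid}: it suffices to show that, for the same integer $n$, the $S$-module $\cotam{n}{\varphi}$ has finite flat dimension over $S$, and I will in fact show that $\cotam{n}{\varphi}=0$ up to projective summands. Both the hypothesis and the conclusion are local on $S$ --- the cotangent complex commutes with localization, and being locally complete intersection is a pointwise condition --- so I would fix a prime of $S$ and reduce at once to the case where $R$ and $S$ are noetherian local rings, $\varphi$ is a local homomorphism, essentially of finite type (as in \cref{th:rigid}), and $k$ denotes the residue field of $S$. Note that only the consequence $\aq_{n}(S/R,k)=0$ of the hypothesis will be needed.

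The key tool is a minimal model of the cotangent complex. Choose a minimal resolvent $X$ of $\varphi$, as in \cref{se:cotangent} (over a general noetherian ring this is the acyclic closure of $\varphi$). Then $L\coloneqq\Omega_{X/R}\otimes_{X}S$ is a complex of finitely generated free $S$-modules representing the cotangent complex of $\varphi$, so that $\cotam{i}{\varphi}=\coker(d_{i+1})$, and its differential satisfies $\partial(L)\subseteq\fm L$; this is the content of \cref{4.3} and its footnote, the relevant facts about $\cotam{i}{\varphi}$ holding in arbitrary characteristic as recalled in its definition above. Since $\partial\otimes_{S}k=0$, \cref{4.1} gives
\[
\aq_{i}(S/R,k)=\hh_{i}(L\otimes_{S}k)=L_{i}\otimes_{S}k
\]
for all $i$. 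In particular the hypothesis yields $L_{n}\otimes_{S}k=0$, and hence $L_{n}=0$ by Nakayama's lemma, since $L_{n}$ is finitely generated.

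It follows immediately that $\cotam{n}{\varphi}=\coker\bigl(d_{n+1}\colon L_{n+1}\to L_{n}\bigr)=0$. As $\cotam{n}{\varphi}$ is independent of the chosen resolvent up to projective summands, it is therefore a projective $S$-module, in particular of finite flat dimension over $S$; applying \cref{th:rigid} with this $n$ shows that $\varphi$ is locally complete intersection. The only step carrying real content is the passage through a minimal model: one needs the existence of a minimal resolvent (acyclic closure) of $\varphi$ over a noetherian local ring, valid in all characteristics, and the resulting identity $\aq_{i}(S/R,k)=L_{i}\otimes_{S}k$, which is what upgrades the homological vanishing $\aq_{n}(S/R,k)=0$ to the structural vanishing $L_{n}=0$; the finite generation of $L_{n}$, needed for Nakayama, is where essential finite type is used. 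Granted these standard facts, the rest is a one-line computation, since $\cotam{n}{\varphi}$ depends only on the single free module $L_{n}$ and its incoming differential, both of which have just been shown to vanish.
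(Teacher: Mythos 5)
Your reduction to \cref{th:rigid} is the right strategy, and it is how the result is obtained in the source you would need to cite, but the way you produce the input to that theorem has a genuine gap: the identification of the cotangent complex with $\Omega_{X/R}\otimes_XS$ for a dg resolvent $X$ is only valid when $\mathbb{Q}\subseteq R$. \cref{se:cotangent} states this standing hypothesis at the outset, and the text explicitly says that $\mathbb{Q}\subseteq R$ is essential for \cref{4.1}; by contrast, $\cotam{n}{\varphi}$ in the appendix is defined from the genuine (simplicial) cotangent complex, which in positive or mixed characteristic is \emph{not} modeled by the acyclic closure. Concretely, the rank of $L_n$ in your minimal resolvent is the deviation $\varepsilon_{n+1}(\varphi)$, and in characteristic $p$ this need not agree with $\rank_k\aq_n(S/R,k)$ (the two theories coincide only in low homological degrees; see Andr\'e's and Avramov's comparison of simplicial and dg constructions). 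So the chain ``$\aq_n(S/R,k)=0\Rightarrow L_n=0\Rightarrow\cotam{n}{\varphi}=0$'' is unjustified outside characteristic zero --- and that is exactly where the content of the corollary lies: as the surrounding text notes, the case $\mathbb{Q}\subseteq R$ was already known to Avramov, and in that case $L_n=0$ says $\varepsilon_{n+1}(\varphi)=0$, which yields complete intersection directly by Gulliksen--Halperin without ever invoking \cref{th:rigid}. A correct derivation works with a complex of flat $S$-modules representing the honest cotangent complex and uses the full functorial hypothesis $\aq_n(S/R,-)=0$ --- not merely its value on $k$, which you discard at the start --- to show that $\coker(d_{n+1})$ is flat; that is the input \cref{th:rigid} requires.

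Two smaller points. First, the corollary does not assume $\varphi$ is essentially of finite type, while \cref{th:rigid} does; you cannot ``reduce at once'' to that case but must arrange it, for instance by localizing and passing to a Cohen factorization of the induced local maps and checking that hypothesis and conclusion are preserved. Second, even granting characteristic zero, note that your argument proves something formally stronger than stated (vanishing of $\aq_n(S/R,k)$ alone suffices there); this is a signal that the minimal-model shortcut is doing work that the general hypothesis cannot do for you in characteristic $p$.
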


Hitherto this result was known if $R$ contains $\mathbb Q$ as a subring, and  also when $\aq_n(S/R,-)=0$ for \emph{all} $n\gg 0$. Both results are due to Avramov~\citerecent{Avramov:1999a}; see also  Halperin~\citerecent{Halperin:1987} and Avramov and Halperin~\citerecent{Avramov/Halperin:1987}. The hypothesis of the latter result is equivalent to the finiteness of the flat dimension of $L_\varphi$, and in this form it settled a longstanding conjecture of Quillen~\citerecent{Quillen:1968c}.

\subsection*{Herzog's conjecture}
Next we turn to Conjecture (C3) due to Herzog that postulates that when $R$ be a regular local ring and $S=R/I$, if $\aq_n(S/R,S)=0$ for $n\gg 0$ then $I$ is generated by a regular sequence. In contrast with \cref{co:rigid}, it is not true that $\aq_n(S/R,S)=0$ for \emph{some} $n\ge 1$ implies $\varphi$ is locally complete intersection; see, for instance, ~\cref{4.14}.  Herzog's conjecture has been settled in some cases by Ulrich~\citerecent{Ulrich:1987}, including when $I$ is in the linkage class of a complete intersection, but remains wide open in general.

It follows from  Herzog's ~\cref{4.15} that Conjecture (C3) holds when the Poincar\'e series of every finitely generated $S$-module is rational. In fact,  Herzog's result is more precise: With $E^S_k(t) =  \sum_{n\geqslant 0}\varepsilon_nt^n$, the generating series of the deviations of $S$ introduced above, Herzog's argument yields:

\begin{theorem}
With $R$ and $S$ as above,  if $P^S_k(t)$ and $E^S_k(t)$ are both rational, then $I$ is generated by a regular sequence.
\end{theorem}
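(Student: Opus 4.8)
The plan is to follow the argument already given in the proof of Herzog's \cref{4.15} verbatim; the only claim to verify is that the new, weaker hypothesis --- rationality of both $P^S_k(t)$ and $E^S_k(t)$ --- suffices at every point where that proof invoked rationality of Poincar\'e series. First I would recall the setup: choosing a minimal resolvent $X$ of $R\to S$ one has $\delta(L)\subseteq\fm L$, so $\rank_S L_i=\rank_k\aq_i(S/R,k)$, and via the Zariski sequence $R\to S\to k$ together with $\aq_i(k/R,k)=0$ for $i>0$ (as $R$ is regular) one gets $\aq_i(S/R,k)\cong\aq_{i+1}(k/S,k)$, hence $\rank_S L_i=\varepsilon_{i+1}(S)$ for $i>0$. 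The hypothesis is that $\aq_i(S/R,S)=0$ for $i\ge i_0$ (for some $i_0>1$); as in \cref{4.15} this makes $M=\coker(L_{i_0}\to L_{i_0-1})$ a module with minimal free resolution having Betti numbers $\varepsilon_i(S)$ for $i\ge i_0$, and by Gulliksen's result we may assume $\varepsilon_i(S)>0$ for all $i\ge i_0$ (otherwise $I$ is already a complete intersection and we are done).

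Next I would isolate exactly where the original proof used rationality. It used two inputs: that $P^M_k(t)$ is rational --- but $P^M_k(t)=\sum_{i\ge i_0}\varepsilon_i t^{i-i_0}$, so this is literally equivalent to rationality of $E^S_k(t)$, which is now a hypothesis --- and that $P^S_k(t)$ is rational, which is also now a hypothesis. The remainder of the argument is purely formal manipulation of these two rational series: one forms $F(t)=\sum_{i\ge1}\varepsilon_i t^i$ (rational, being $E^S_k(t)$ up to the constant term $\varepsilon_0=0$), uses the deviation formula
\[
P^S_k(z)=\frac{(1+z)^{\varepsilon_1}(1+z^3)^{\varepsilon_3}\cdots}{(1-z^2)^{\varepsilon_2}(1-z^4)^{\varepsilon_4}\cdots}\,,
\]
takes the logarithmic derivative $\lambda P^S_k$, extracts the coefficients of $(\lambda P^S_k)(-z)-F'(-z)-\sum_i\varepsilon_1 z^i$ multiplied by $z$, and finds that the resulting series $\sum\alpha_i z^i$ is rational with
\[
\alpha_i=\sum_{\genfrac{}{}{0pt}{}{j\mid i}{j\ne 1,i}}(-1)^{j+1}j\varepsilon_j\,.
\]
Since $\varepsilon_i>0$ for $i\ge i_0$, for odd $i>i_0^2$ one has $\alpha_i=0$ if $i$ is prime and $\alpha_i>0$ if $i$ is not prime, contradicting Mahler's \cref{th:mahler} (which forbids a rational series whose vanishing set, eventually, is precisely the odd primes beyond a bound). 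Hence $\varepsilon_i=0$ for large $i$ after all, and $I$ is a complete intersection.

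The point of the theorem is thus essentially bookkeeping: the original \cref{4.15} nominally assumed every finitely generated $S$-module has rational Poincar\'e series, but the proof only ever touched $P^S_k$ and $P^M_k$, and the latter's rationality is exactly rationality of $E^S_k$. I do not anticipate a genuine obstacle --- the hard content is entirely contained in \cref{4.15} and \cref{th:mahler}, both available to me. The one place warranting care is confirming that the congruence/primality analysis giving the ``strange rational function'' only requires $\varepsilon_i>0$ for $i\ge i_0$ (which follows from the hypothesis plus Gulliksen, not from any global rationality), and that nothing in deriving the formula for $\alpha_i$ secretly used rationality of $P^M_k$ for a module $M$ other than the specific cokernel above. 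Since $P^M_k(t)$ for that $M$ is a finite rational expression in the $\varepsilon_i$ precisely when $E^S_k$ is rational, the argument goes through unchanged, and the proof is complete by citing \cref{4.15} with this observation.
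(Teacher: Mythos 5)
The theorem you are asked to prove carries \emph{no} hypothesis on the cotangent homology modules: it asserts that the rationality of $P^S_k(t)$ and $E^S_k(t)$ by itself forces $I$ to be generated by a regular sequence. Your proof, however, begins by declaring that ``the hypothesis is that $\aq_i(S/R,S)=0$ for $i\ge i_0$'' and then runs the argument of \cref{4.15} under that assumption. What you have written is therefore a proof of the weaker, conditional statement that Conjecture (C3) holds whenever the two series are rational --- not of the theorem itself. The surrounding discussion (in particular the Question asking whether $P^S_k(t)$ and $E^S_k(t)$ can be \emph{both} irrational) only makes sense for the unconditional statement: when $S$ is not a complete intersection, at least one of the two series must be irrational.

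Concretely, the step that breaks without the imported hypothesis is the reduction to $\varepsilon_i>0$ for all $i\ge i_0$. In \cref{4.15} this positivity comes for free, because beyond degree $i_0$ the complex $L$ is a \emph{minimal free resolution} of $M=\coker(L_{i_0}\to L_{i_0-1})$, so a single vanishing Betti number forces all later ones to vanish, and then Gulliksen's theorem applies. Without the vanishing of the $\aq_i(S/R,S)$ the complex $L$ resolves nothing; rationality of $E^S_k(t)$ together with \cref{th:mahler} only tells you that $\{i:\varepsilon_i=0\}$ is eventually a union of arithmetic progressions, which is entirely compatible with infinitely many deviations vanishing while infinitely many do not --- a situation Gulliksen's theorem (which needs \emph{all} large deviations to vanish) cannot handle. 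The missing ingredient is the rigidity theorem of Halperin \citerecent{Halperin:1987}: if $\varepsilon_i(S)=0$ for a single $i\ge 3$ then $S$ is a complete intersection; equivalently, if $S$ is not a complete intersection then $\varepsilon_i(S)>0$ for all $i\ge 3$. Granting that, one takes $i_0=3$ and the remainder of your argument --- the product formula for $P^S_k$, its logarithmic derivative, the coefficients $\alpha_i$, and the contradiction with Mahler's theorem --- does go through verbatim using only the rationality of the two given series. So the computational core of your write-up is sound, but the proof is incomplete until the appeal to Halperin (or some equivalent rigidity statement for deviations) replaces the resolution-theoretic argument you borrowed from \cref{4.15}.
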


As Herzog notes, Roos  has discovered rings $S$ with $P^S_k(t)$  not rational~\cite{Lofwall/Roos:1980}. Roos has also found rings $S$ for which $P^S_k(t)$ is rational but there exist finitely generated $S$-modules $M$ for which $P^S_M(t)$ is not rational; see \citerecent{Roos:2005}. Much less is known about the series $E^S_k(t)$. In particular, the following questions seem to be open:

\begin{question}
Does there exist a local noetherian ring $S$ that is not a complete intersection for which $E^S_k(t)$ is rational? Is there a ring $S$ for which $P^S_k(t)$ and $E^S_k(t)$ are \emph{both} irrational?
\end{question}

Part of our motivation in considering this question is that, if it turns out $E^S_k(t)$ can never be rational when $S$ is not a complete intersection, one would get a new proof of Herzog's theorem. But the main motivation is really to come to better grips with the series $E^S_k(t)$.

Finally we note, by standard arguments, Herzog's conjecture is equivalent to the following more general statement:

\begin{conjecture}
Let $\varphi\colon R\to S$ be a map of commutative noetherian rings that is locally of finite flat dimension. If $\aq_n(S/R,S) =0$ for $n\gg 0$, then $\varphi$ is locally complete intersection.
\end{conjecture}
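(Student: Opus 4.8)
The plan is to prove the two directions of the asserted equivalence separately. That the displayed conjecture implies Herzog's Conjecture~(C3) is immediate: when $R$ is regular local and $S=R/I$, the map $\varphi$ is automatically of finite flat dimension (indeed $\pdim_R S\le\dim R$), and for a surjection ``$\varphi$ locally complete intersection'' means exactly that $I$ is generated by a regular sequence; so the displayed conjecture, specialized to such $\varphi$, is precisely the implication (c)$\Rightarrow$(a) of Conjecture~(C3), the other implications there being either trivial ((b)$\Rightarrow$(c)) or established in \cref{se:cotangent} ((a)$\Rightarrow$(b)). Here $\aq_n(S/R,S)$ is read as André--Quillen homology, which coincides with the cotangent functor of \cref{se:cotangent} when $\mathbb{Q}\subseteq R$.

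For the converse I would carry out the standard reductions from the theory of complete intersection homomorphisms. Being locally complete intersection is a condition local on $\spec S$, and $\aq$ localizes: for $\mathfrak q\in\spec S$ over $\mathfrak p\in\spec R$ one has $\aq_n(S/R,S)_{\mathfrak q}\cong\aq_n(S_{\mathfrak q}/R_{\mathfrak p},S_{\mathfrak q})$, because the localization $R\to R_{\mathfrak p}$ contributes nothing to the cotangent complex. So we may assume $\varphi$ is a local homomorphism of finite flat dimension with $\aq_n(S/R,S)=0$ for $n\gg0$; and, since completion is flat, since $\mathbb{L}_{\widehat S/\widehat R}\simeq\mathbb{L}_{S/R}\otimes_S\widehat S$ for such maps, and since $\widehat\varphi$ is complete intersection iff $\varphi$ is, we may further assume $R$ and $S$ complete local. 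Choose a Cohen factorization $R\xrightarrow{\,\dot\varphi\,}R'\xrightarrow{\,\varphi'\,}S$ with $\dot\varphi$ flat and $\varphi'$ surjective, taking it to be a regular factorization (which exists because complete local rings are excellent), so that $\dot\varphi$ has geometrically regular fibres and hence $\aq_n(R'/R,-)=0$ for $n\ge2$ by a theorem of André. Finiteness of the flat dimension of $\varphi$ forces $\pdim_{R'}S<\infty$, and $\varphi$ is complete intersection iff $\ker\varphi'$ is generated by a regular sequence. Running $R\to R'\to S$ through the Jacobi--Zariski sequence with coefficients in $S$, the vanishing of $\aq_{\ge2}(R'/R,S)$ gives $\aq_n(S/R',S)\cong\aq_n(S/R,S)$ for $n\ge3$; hence $\aq_n(S/R',S)=0$ for $n\gg0$.

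At this point we are reduced to the surjective case over a possibly larger source: a surjection $\varphi'\colon R'\twoheadrightarrow S$ of complete local rings with $\pdim_{R'}S<\infty$ and $\aq_n(S/R',S)=0$ for $n\gg0$ must have $\ker\varphi'$ generated by a regular sequence. If $R'$ were regular this would be exactly Conjecture~(C3); the difficulty, and the step I expect to be the main obstacle, is that a Cohen factorization only delivers an $R'$ with \emph{regular closed fibre over} $R$, which need not be regular. The natural way to finish is to Cohen-present $R'=Q/\mathfrak b$ with $Q$ regular, to transport $\aq_n(S/R',S)=0$ along the Jacobi--Zariski sequence of $Q\to R'\to S$ to the conclusion $\aq_n(S/Q,S)=0$ for $n\gg0$, then to invoke Conjecture~(C3) for the regular ring $Q$ to learn that $S$ is a complete intersection ring, and finally to derive that $\ker\varphi'$ is generated by a regular sequence — using that $S$ is then a complete intersection ring and that $\pdim_{R'}S<\infty$. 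The obstruction is precisely the transport step: $\mathbb{L}_{R'/Q}$ has unbounded homology unless $R'$ is itself a complete intersection, so the terms $\aq_n(R'/Q,S)$ in the Jacobi--Zariski sequence need not vanish for $n\gg0$, and the vanishing hypothesis over $R'$ does not obviously descend to $Q$. Overcoming this — by arranging the Cohen factorization so that $R'$ is a complete intersection (which in general it is not), by a more careful analysis of the cotangent complex under flat base change, or by proving a strengthened form of Conjecture~(C3) for non-regular source rings directly — is where the argument must do its real work.
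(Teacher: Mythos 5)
The paper offers no proof of this statement---it is a conjecture, asserted only to be \emph{equivalent} to (C3) ``by standard arguments''---so your decision to prove the equivalence is the right reading, and the only thing to compare against is that assertion. Your easy direction is fine, and the standard reductions are executed correctly: localization of Andr\'e--Quillen homology, flat base change to the completion, and a Cohen factorization $R\to R'\to S$ together with Andr\'e's vanishing theorem for regular homomorphisms do reduce the problem to a surjection $\varphi'\colon R'\to S$ of complete local rings with $\pdim_{R'}S<\infty$ and $\aq_n(S/R',S)=0$ for $n\gg0$. But the step you flag as ``where the argument must do its real work'' is not a technicality to be cleaned up later; it is the entire content of the equivalence, so as it stands the proposal is an honest but incomplete sketch. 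More importantly, the endgame you outline---transport the vanishing along $Q\to R'\to S$ for a regular presentation $Q\twoheadrightarrow R'$, invoke (C3) for $Q\to S$, and conclude that $S$ is a complete intersection \emph{ring}---is not merely obstructed but provably cannot succeed. Take any complete local $R'$ that is not a complete intersection and put $S=R'/(x)$ for a nonzerodivisor $x$ in the maximal ideal. Then $\pdim_{R'}S=1$ and $\aq_n(S/R',S)=0$ for all $n\geq 2$, and the conclusion of the conjecture holds ($\varphi'$ is complete intersection); yet $S$ is not a complete intersection ring, so the intermediate statement you are aiming for is false. Correspondingly, assuming (C3) (which you are entitled to in this direction) applied to $Q\to R'$, one gets $\aq_n(R'/Q,R')\neq0$ for arbitrarily large $n$; the long exact sequence for $0\to R'\xrightarrow{\ x\ }R'\to S\to 0$ and Nakayama then give $\aq_n(R'/Q,S)\neq0$, and the Jacobi--Zariski sequence gives $\aq_n(S/Q,S)\cong\aq_n(R'/Q,S)\neq0$ for those $n$. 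So the vanishing hypothesis genuinely does not descend from $R'$ to $Q$, and no application of (C3) to the composite $Q\to S$ can detect the \emph{relative} condition ``$\ker\varphi'$ is generated by a regular sequence'' when $R'$ itself is not a complete intersection.

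The missing idea is that one must work with the relative invariants of $\varphi'$ rather than the absolute invariants of $S$ over a regular ring: the complex computing $\aq_*(S/R',S)$ is Herzog's complex $L$ built from a minimal resolvent of $\varphi'$, whose ranks are the deviations of the \emph{map} $\varphi'$, and the bridge to the regular case has to go through the structure theory of surjections of finite projective dimension (minimal models over $R'$, and Avramov's theorem that such surjections are small, so that for $n\geq 2$ the deviations of $\varphi'$ are exactly the deviations of $S$ minus those of $R'$), rather than through a naive Jacobi--Zariski transport of the vanishing hypothesis. Until that comparison is supplied, the hard direction of the equivalence remains unproved in your write-up.
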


This formulation makes it clear that a positive solution to Herzog's conjecture would be  a significant strengthening of Avramov's result, confirming a conjecture of Quillen, mentioned above.

\begin{ack}
We are thankful to Samuel Alvite, Luchezar Avramov, and Antonino Ficarra for their comments, suggestions, and corrections. This material is based on work supported by the National Science Foundation under Grant No.\ DMS-1928930, while  Briggs and Iyengar were in residence at the Mathematical Sciences Research Institute in Berkeley, California, during the Spring semester of 2024. Iyengar was also partly supported by NSF grant DMS-200985.
\end{ack}

\bibliographystyle{amsplain}
\bibliography{Herzog.bib}

\bibliographystylerecent{amsplain}
\bibliographyrecent{Herzog.bib}

\end{document}